\providecommand{\customgenericname}{}
\newcommand{\newcustomtheorem}[2]{%
	\newenvironment{#1}[1]
	{ 
		\renewcommand\customgenericname{#2}%
		\renewcommand\theinnercustomgeneric{##1}%
		\innercustomgeneric
	}
	{\endinnercustomgeneric}
}
\definecolor{myblue}{rgb}{.8, .8, 1}
  \newcommand*\mybluebox[1]{
    \colorbox{myblue}{\hspace{1em}#1\hspace{1em}}}
\crefname{equation}{}{}
\crefname{chapter}{Chapter}{Chapters}
\crefname{figure}{Figure}{Figures}
\crefname{theorem}{Theorem}{Theorems}
\crefname{lemma}{Lemma}{Lemmas}
\crefname{proposition}{Proposition}{Propositions}
\crefname{corollary}{Corollary}{Corollarys}
\crefname{definition}{Definition}{Definitions}
\crefname{fact}{Fact}{Facts}
\crefname{example}{Example}{Examples}
\crefname{algorithm}{Algorithm}{Algorithms}
\crefname{remark}{Remark}{Remarks}
\crefname{note}{Note}{Notes}
\crefname{notation}{Notation}{Notations}
\crefname{case}{Case}{Cases}
\crefname{exercise}{Exercise}{Exercises}
\crefname{question}{Question}{Questions}
\crefname{claim}{Claim}{Claims}
\crefname{enumi}{}{}
\numberwithin{equation}{section}
\theoremstyle{plain}
\newtheorem{theorem}{Theorem}[section]
\newtheorem{corollary}[theorem]{Corollary}
\newtheorem{fact}[theorem]{Fact}
\newtheorem{lemma}[theorem]{Lemma}
\newtheorem{proposition}[theorem]{Proposition}
\theoremstyle{definition}
\newtheorem{definition}[theorem]{Definition}
\newtheorem{example}[theorem]{Example}
\newtheorem{remark}[theorem]{Remark}
\newcommand{\inte}{\ensuremath{\operatorname{int}}}
\newcommand{\aff}{\ensuremath{\operatorname{aff} \,}}
\newcommand{\spn}{\ensuremath{{\operatorname{span} \,}}}
\newcommand{\weakly}{\ensuremath{{\;\operatorname{\rightharpoonup}\;}}}
\newcommand{\Fix}{\ensuremath{\operatorname{Fix}}}
\newcommand{\Id}{\ensuremath{\operatorname{Id}}}
\newcommand{\dist}{\ensuremath{\operatorname{d}}}
\newcommand{\Pro}{\ensuremath{\operatorname{P}}}
\newcommand{\R}{\ensuremath{\operatorname{R}}}
\newcommand{\I}{\ensuremath{\operatorname{I}}}
\newcommand{\pa}{\ensuremath{\operatorname{par}}}
\newcommand{\card}{\ensuremath{\operatorname{card}}}
\newcommand{\Range}{\ensuremath{\operatorname{ran}}}
\newcommand{\CCO}[1]{CC{#1}}
\newcommand{\CC}[1]{CC_{#1}}
\providecommand{\norm}[1]{\lVert#1\rVert}
\providecommand{\Norm}[1]{{\Big\lVert}#1{\Big\rVert}}
\providecommand{\innp}[1]{\langle#1\rangle}
\providecommand{\Innp}[1]{\Big\langle#1\Big\rangle}
\begin{document}

\title{ \sffamily Finite convergence of locally proper  circumcentered methods}

\author{
         Hui\ Ouyang\thanks{
                 Mathematics, University of British Columbia, Kelowna, B.C.\ V1V~1V7, Canada.
                 E-mail: \href{mailto:hui.ouyang@alumni.ubc.ca}{\texttt{hui.ouyang@alumni.ubc.ca}}.} 
                 }

\date{December 7, 2020}

\maketitle

\begin{abstract}
\noindent
In view of the great performance of circumcentered isometry methods 
for solving the best approximation problem, in this work we further investigate the locally proper circumcenter mapping and circumcentered method. 
Various examples of locally proper circumcenter mapping are presented and studied.  
Inspired by some results on circumcentered-reflection method by Behling, Bello-Cruz, and Santos in their recent papers \cite{BCS2019}, \cite{BBCS2020}, and \cite{BBCS2020CRMbetter}, we provide sufficient conditions for one-step convergence of circumcentered isometry methods for finding the best approximation point onto the intersection of fixed point sets of related isometries. 
In addition, we elaborate the performance of circumcentered reflection methods induced by reflectors associated with hyperplanes and halfspaces for finding the best approximation point onto (or a point in) the intersection of hyperplanes and halfspaces.
\end{abstract}

{\small
\noindent
{\bfseries 2020 Mathematics Subject Classification:}
{Primary 41A50, 90C25, 41A25;
Secondary 47H09, 47H04, 46B04.}

\noindent{\bfseries Keywords:}
best approximation problem, feasibility problem, circumcenter mapping,  circumcentered   method,  properness,  halfspace, hyperplane,  finite convergence.
}

\section{Introduction} \label{sec:Introduction}
Throughout this paper, we assume that
\begin{empheq}[box = \mybluebox]{equation*}
\text{$\mathcal{H}$ is a real Hilbert space},
\end{empheq}
with inner product $\innp{\cdot,\cdot}$ and induced norm $\|\cdot\|$. Set $\mathbb{N} :=\{0,1,2,\ldots\}$, let $m \in \mathbb{N} \smallsetminus \{0\}$, and denote by $\I:= \{1, \ldots, m\}$. 

 Let $C_{1}, \ldots, C_{m} $ be closed and convex subsets of $\mathcal{H}$ with $\cap^{m}_{i=1} C_{i} \neq \varnothing$. Let $x \in \mathcal{H}$.   The \emph{best approximation problem} is to find the projection $\Pro_{ \cap^{m}_{i=1} C_{i} }x$, and
the \emph{feasibility problem} is to find a point in $\cap^{m}_{i=1} C_{i} $.

Denote by $\mathcal{P}(\mathcal{H})$ the set of nonempty subsets of $\mathcal{H}$ containing finitely many elements.  The circumcenter operator  $\CCO{} \colon \mathcal{P}(\mathcal{H}) \to \mathcal{H} \cup \{ \varnothing \}$
maps every $K \in \mathcal{P}(\mathcal{H})$ to the  \emph{circumcenter $\CCO{(K)}$ of $K$}, where
$\CCO{(K)}$  is either the empty set or the unique point $\CCO{(K)}$ such that $\CCO{(K)} \in \aff (K)$ and $\CCO{(K)}$ is equidistant  from all points  in $K$. 

Let  $\mathcal{K} $ and $\mathcal{D} $ be nonempty subsets of $\mathcal{H}$ with $\mathcal{D} \subseteq \mathcal{K} $. Let $(\forall i \in \I)$ $T_{i} : \mathcal{K} \to \mathcal{H}$. Set $\mathcal{S}:=\{ T_{1}, \ldots,  T_{m} \}$ and$(\forall x \in \mathcal{K} )$ $\mathcal{S}(x) :=\{ T_{1}x, \ldots,   T_{m}x\}$. The \emph{circumcenter mapping $\CC{\mathcal{S}}$ induced by $\mathcal{S}$} is defined as $(\forall x \in \mathcal{K})$ 	$\CC{\mathcal{S}} (x)= \CCO(\mathcal{S}(x))$. We say $\CC{\mathcal{S}}$ is   \emph{proper  over $\mathcal{D}$},  if $(\forall x \in \mathcal{D} )$ $\CC{\mathcal{S}}x \in \mathcal{D}$. In particular, if $\mathcal{D} =\mathcal{H}$, then we omit the phrase \enquote{over $\mathcal{H}$}; if $  \mathcal{D} \subsetneqq \mathcal{H}$, we also say that  $\CC{\mathcal{S}}$ is \emph{locally proper}.
Assume that   $\CC{\mathcal{S}}$ is    proper  over $\mathcal{D}$. Let $x \in \mathcal{D}$. The \emph{circumcentered method induced by $\mathcal{S}$} generates the sequence $(\CC{\mathcal{S}}^{k} x)_{k \in \mathbb{N}}$ of iterations. When $\mathcal{D} \neq \mathcal{H}$, we also say the related circumcentered method is \emph{locally proper}.
 Particularly, if all $T_{1}, \ldots, T_{m}$ are isometries (resp.\,reflectors), then we call the method \emph{circumcentered isometry method} (\emph{resp.\,circumcentered reflection method}) and CIM (resp.\,CRM) for short.

For every $ i \in \I$, let $U_{i}$ be an affine subspace and let $\R_{U_{i}}$ be the reflector associated with $U_{i}$. 
 In \cite{BCS2017}, Behling, Bello Cruz and Santos defined the circumcentered Douglas-Rachford method (C-DRM), which is the first circumcentered method in the literature and is actually the circumcentered method induced by $S:=\{\Id, \R_{U_{1}}, \R_{U_{2}}\R_{U_{1}} \}$. Then they  introduced the circumcentered-reflection method which is   the circumcentered method induced by $S:=\{\Id, \R_{U_{1}}, \R_{U_{2}}\R_{U_{1}}, \cdots,   \R_{U_{m}} \cdots \R_{U_{2}}\R_{U_{1}} \}$.  Motivated by the C-DRM, Bauschke, Ouyang and Wang introduced the circumcenter mapping and circumcentered method defined above  and showed the global properness of circumcenter mappings induced by sets of isometries in \cite{BOyW2019Isometry}.
Results on circumcentered methods accelerating   the well-known Douglas-Rachford method and method of alternating projections can be found in  \cite{BCS2017}, \cite{BOyW2019LinearConvergence}, and \cite{BBCS2020CRMbetter}.
Note that circumcenter mappings induced by sets of operators that are not isometric are generally not globally proper, which implies that the related circumcentered methods may not be well-defined on the whole space. But clearly if a circumcenter mapping is proper over a nonempty set, say $\mathcal{D}$, then the related circumcentered method is well-defined with initial points in $\mathcal{D}$. Considering the great performance of CIMs for solving best approximation problems, naturally one may want to know how will the locally proper circumcentered methods perform when solving  best approximation or feasibility problems?

In this work, \emph{our goal is to study locally proper circumcenter mappings and to explore the finite convergence of  circumcentered methods (not necessarily proper over $\mathcal{H}$) for solving the best approximation and  feasibility problems}. 
The main results in this work are the following:
\begin{enumerate}
	\item[\textbf{R1:}] \cref{lemma:pxyz} presents an explicit formula for the  circumcenter of three points satisfying an equidistant condition, which plays a critical role in proving the one-step convergence of the CIM  with initial point from a certain set   in 
	\cref{theorem:CCSHRT12}.

	\item[\textbf{R2:}] \cref{prop:Tn:CCSP} states sufficient conditions for the one-step convergence of circumcenter methods induced by $\mathcal{S}$ being $\{ \Id,  T_{1},T_{2},  \ldots, T_{m} \}$ or $\{ \Id,  T_{1}, T_{2}T_{1},  \ldots, T_{m}\cdots T_{2}T_{1} \} $ where $(\forall i \in \I)$ $T_{i}$ is isometric.  Moreover, \Cref{prop:Rn:CCSTmT1:Condition,prop:Rn:CCSP:Condition} illustrate that the sets of initial points required in  \cref{prop:Tn:CCSP} are dense in $\mathcal{H}$ and specify clear procedures to find practical initial points.
		
	\item[\textbf{R3:}] \cref{theorem:ConclusionWH} illustrates that we can always find CRMs induced by sets of appropriate  reflectors for finding the best approximation point onto or feasibility point in the intersection of hyperplanes and halfspaces in at most three steps. 
\end{enumerate}	

The organization of the rest of the work is the following. Some preliminary results are presented in \cref{sec:AuxiliaryResults}. In particular,
 \cref{theorem:quasidemiweakconver} generalizes Bauschke et al.  \cite[Theorem~4.7]{BOyW2019Isometry} from circumcenter mapping to general operator and specifies   sufficient conditions for the weak convergence of the related  iteration sequence.  
 In \cref{sec:CMLocalProper}, we obtain more practical formula for the circumcenter of three points satisfying a equidistant condition.
  We also  verify that some results on circumcenter mappings with global properness assumption in \cite{BOyW2019Isometry} and \cite{BOyW2019LinearConvergence} actually hold with only local properness assumption (see \cref{fact:CCSRestrict}  and \cref{prop:conver:normconve}).
In \cref{sec:OneStepConvergence}, sufficient conditions for the one-step convergence of CIMs are exhibited in \Cref{theorem:CCSHRT12,prop:Tn:CCSP}. 
 In addition, \Cref{prop:Rn:CCSTmT1:Condition,prop:Rn:CCSP:Condition} confirm that the conditions required in   \cref{prop:Tn:CCSP} are not strong and state clear steps to find desired initial points.
In fact, \cref{theorem:CCSHRT12}\cref{theorem:CCSHRT12:P}, \cref{prop:Tn:CCSP}\cref{prop:Tn:CCSP:T12mTm}, and \cref{prop:CUC}\cref{prop:CUC:CCS} are generalizations of  \cite[Lemma~2]{BBCS2020}, \cite[Lemma~3]{BCS2019}, and  \cite[Lemma~3]{BBCS2020}, respectively. The  detailed relations between our results and the related known results are elaborated in \cref{sec:OneStepConvergence}.  Last but not least, the finite convergence of circumcentered reflection methods induced by sets of reflectors associated with hyperplanes and halfspaces is explored in \cref{sec:CRM:hyperplanes}. 

We now turn to the notation used in this work. Let $C$ be a nonempty subset
of $\mathcal{H}$.   
The \emph{orthogonal complement} of $C$ is the set $ C^{\perp} :=\{y \in \mathcal{H}~|~ (\forall x \in C)~\innp{x,y}=0 \}. $ 
$C$ is an \emph{affine subspace} of
$\mathcal{H}$ if $C \neq \varnothing$ and $(\forall \rho\in\mathbb{R})$ $\rho
C + (1-\rho)C = C$. The intersection of all affine subspaces of $\mathcal{H}$ containing $C$ is denoted by $\aff C$ and called the \emph{affine hull} of $C$.    
An affine subspace $U$ is said to be \emph{parallel} to an affine subspace $
M $ if $U = M +a $ for some $ a \in \mathcal{H}$.
Every affine subspace $U$ is parallel to a unique linear
subspace $\pa U$, which is given by
$(\forall y \in U)$  $\pa U := U - y = U - U$.
Suppose that  $C$ is a nonempty closed convex subset of $\mathcal{H}$. The \emph{projector} (or \emph{projection operator}) onto $C$ is the operator, denoted by
$\Pro_{C}$,  that maps every point in $\mathcal{H}$ to its unique projection onto $C$. $\R_{C} :=2 \Pro_{C} -\Id$ is the \emph{reflector} associated with $C$. 
Let $x \in \mathcal{H}$ and let $\epsilon \in \mathbb{R}_{+}$. $\ker x := \{y \in
\mathcal{H} ~:~ \innp{y,x} =0 \}$ and $\mathbf{B}[x;\epsilon] := \{ y \in  \mathcal{H} ~:~ \norm{y-x} \leq \epsilon  \}$. 
For two subsets $A$ and $B$ of $\mathcal{H}$, 
$\dist (A,
B) := \inf \{\norm{a-b} ~:~ a \in A \text{ and } b \in B  \}$.

Let $\mathcal{D}$ be a nonempty subset of $\mathcal{H}$, and let 
$T: \mathcal{D} \rightarrow \mathcal{H}$.  
$T$ is said to be \emph{isometric} or an \emph{isometry}, if $(\forall x, y \in \mathcal{D})$ $\norm{T(x) -T(y)} =\norm{x-y}$. The \emph{set of fixed
	points of the operator $T$} is denoted by $\Fix T$, i.e., $\Fix T := \{x \in
\mathcal{D} ~:~ Tx=x\}$. The \emph{range of $T$} is defined as $\Range T :=\{Tx ~:~ x \in \mathcal{H} \}$;  moreover, $\overline{\Range} \, T$ is the closure of $\Range T$. Denote by $\mathcal{B} (\mathcal{H}, \mathcal{H}) := \{ T: \mathcal{H} \rightarrow \mathcal{H} ~:~ T ~\text{is bounded and linear} \}$. For every $T \in \mathcal{B} (\mathcal{H}, \mathcal{H})$, the \emph{operator norm} $\norm{T}$ of $T$ is defined by $\norm{T} := \sup_{\norm{x} \leq  1} \norm{Tx}$.  A sequence $(x_{k})_{k \in \mathbb{N}}$ in $\mathcal{H}$ \emph{converges
	weakly} to a point $x \in \mathcal{H}$ if, for every $u \in \mathcal{H}$,
$\innp{x_{k},u} \rightarrow \innp{x,u}$; in symbols, $x_{k} \weakly x$. 

For other notation not explicitly defined here, we refer the reader to \cite{BC2017}.

\section{Auxiliary results} \label{sec:AuxiliaryResults}

In this section, we collect some  results to be used subsequently.

\subsection*{Projections}
 
 \begin{fact} {\rm \cite[Fact~1.8]{DH2006II}}  \label{fact:AsubseteqB:Projection}
 	Let $A$ and $B$ be two nonempty closed convex subsets of $\mathcal{H}$. Let $A \subseteq B $ and $x \in \mathcal{H}$. Then  $\Pro_{B}x \in A$ if and only if $ \Pro_{B}x  =\Pro_{A}x $.
 \end{fact}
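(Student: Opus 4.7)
The statement is a short equivalence, and the plan is to dispatch each direction using only the variational characterization of the projection onto a nonempty closed convex set, namely that $\Pro_{C}x$ is the unique minimizer of $\norm{x-\cdot}$ over $C$.

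For the \enquote{if} direction, there is essentially nothing to do: since $\Pro_{A}x \in A$ by definition, the equality $\Pro_{B}x = \Pro_{A}x$ immediately yields $\Pro_{B}x \in A$.

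For the \enquote{only if} direction, I would argue that the hypothesis $\Pro_{B}x \in A$ together with $A \subseteq B$ pins $\Pro_{B}x$ down as a candidate in both minimization problems. Concretely, since $\Pro_{B}x$ belongs to $A$, it competes in the projection onto $A$, so $\norm{x-\Pro_{A}x} \leq \norm{x-\Pro_{B}x}$. Conversely, $\Pro_{A}x \in A \subseteq B$ competes in the projection onto $B$, hence $\norm{x-\Pro_{B}x} \leq \norm{x-\Pro_{A}x}$. The two inequalities combine to an equality of distances, and the uniqueness of the minimizer of $\norm{x-\cdot}$ over the nonempty closed convex set $A$ forces $\Pro_{B}x = \Pro_{A}x$.

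There is no real obstacle here; the only subtle point to state carefully is the uniqueness step in the last sentence, which is where convexity and closedness of $A$ enter. Accordingly, the write-up should be a few lines long and should explicitly invoke the minimization characterization of $\Pro_{A}$ and $\Pro_{B}$ to justify both inequalities.
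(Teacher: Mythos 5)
Your proof is correct: the paper states this result as a Fact cited from the literature (Deutsch--Hundal) without reproducing a proof, and your argument is the standard one. Both directions are handled properly, the two competing-minimizer inequalities are exactly what is needed, and you correctly identify the uniqueness of the nearest point in the nonempty closed convex set $A$ as the step that closes the \enquote{only if} direction.
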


\begin{fact} {\rm \cite[Proposition~3.19]{BC2017}} \label{fac:SetChangeProje}
	Let $C$ be a nonempty closed convex subset of $\mathcal{H}$ and let $x \in \mathcal{H}$. Set $D:=z+C$, where $z \in \mathcal{H}$. Then $\Pro_{D}x=z+\Pro_{C}(x-z)$.
\end{fact}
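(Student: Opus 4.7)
The plan is to appeal directly to the variational characterization of the projection onto a nonempty closed convex set. Since translation is an isometric affine bijection of $\mathcal{H}$, the set $D = z + C$ inherits nonemptiness, closedness, and convexity from $C$, so $\Pro_{D}x$ is well-defined as the unique minimizer of $d \mapsto \norm{x-d}^{2}$ over $d \in D$.

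The main step is a change of variable. Every $d \in D$ is uniquely of the form $d = z + c$ with $c \in C$, and substitution gives $\norm{x-d}^{2} = \norm{(x-z)-c}^{2}$. Minimizing the right-hand side over $c \in C$ is precisely the problem solved by $\Pro_{C}(x-z)$, and since the affine bijection $c \mapsto z + c$ between $C$ and $D$ preserves the objective value, the minimizer over $D$ is $d^{\star} = z + \Pro_{C}(x-z)$. Uniqueness of the projection then yields $\Pro_{D}x = z + \Pro_{C}(x-z)$.

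Alternatively, one can verify the obtuse-angle (variational) inequality directly: setting $y := z + \Pro_{C}(x-z)$, one has $y \in D$, and for any $d = z + c \in D$ a short computation gives $\innp{x-y,\,d-y} = \innp{(x-z)-\Pro_{C}(x-z),\,c-\Pro_{C}(x-z)} \le 0$, where the inequality is exactly the characterization of $\Pro_{C}(x-z)$ as the projection of $x-z$ onto $C$. Combined with $y \in D$, this identifies $y$ as $\Pro_{D}x$.

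There is no substantive obstacle here; the statement is essentially the translation-equivariance of the projector, and either route reduces the claim to the defining property of $\Pro_{C}$ applied to the shifted point $x - z$.
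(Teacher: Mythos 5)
Your argument is correct: both the change-of-variables reduction and the verification of the variational inequality $\innp{x-y,d-y}\le 0$ are valid and reduce the claim to the defining property of $\Pro_{C}$ at the shifted point $x-z$. The paper itself gives no proof here — it cites this as \cite[Proposition~3.19]{BC2017} — and your proof is exactly the standard argument for translation-equivariance of the projector, so there is nothing to reconcile.
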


\begin{fact} {\rm \cite[Example~29.18]{BC2017}} \label{fact:Projec:Hyperplane}
Let $u \in \mathcal{H} \smallsetminus \{0\}$, let $\eta \in \mathbb{R}$, and set $H := \{ x \in \mathcal{H} ~:~ \innp{x,u} = \eta \}$. Then $	(\forall x \in \mathcal{H}) $ $ \Pro_{H} x = x+ \frac{\eta - \innp{x,u}}{\norm{u}^{2}}u.$
\end{fact}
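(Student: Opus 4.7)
The plan is to directly verify the formula by using the characterization that for a nonempty closed affine subspace $H$, the projection $\Pro_{H}x$ is the unique $p\in H$ satisfying $x-p \in (\pa H)^{\perp}$. Since the parallel subspace of $H = \{y : \innp{y,u}=\eta\}$ is $\pa H = \ker u$, we have $(\pa H)^{\perp} = \mathbb{R}\cdot u$, so the candidate for $\Pro_H x$ must be of the form $x + \lambda u$ for some $\lambda \in \mathbb{R}$.

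First, I would define $p := x + \dfrac{\eta - \innp{x,u}}{\norm{u}^{2}}u$ and check membership in $H$ by a one-line computation:
\begin{equation*}
\innp{p,u} = \innp{x,u} + \frac{\eta - \innp{x,u}}{\norm{u}^{2}}\norm{u}^{2} = \eta,
\end{equation*}
so $p\in H$.

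Second, I would establish optimality via the Pythagorean theorem. For any $h \in H$, write $x-h = (x-p) + (p-h)$. The vector $x-p$ is a scalar multiple of $u$, while $p-h \in \pa H = \ker u$ since $\innp{p-h,u} = \eta - \eta = 0$. Hence $\innp{x-p,\, p-h} = 0$, which yields
\begin{equation*}
\norm{x-h}^{2} = \norm{x-p}^{2} + \norm{p-h}^{2} \geq \norm{x-p}^{2},
\end{equation*}
with equality if and only if $h = p$. This identifies $p$ as the unique nearest point in $H$ to $x$, giving $\Pro_{H}x = p$.

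There is essentially no obstacle here; this is a standard computation, and the only thing one must be careful about is invoking the correct normal-direction characterization of projection onto an affine (not merely linear) subspace, which is why I would frame the argument through $\pa H$ and the Pythagorean decomposition rather than, e.g., setting up and solving a Lagrangian.
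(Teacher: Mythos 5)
Your verification is correct and complete: the membership computation shows $p\in H$, and the orthogonal decomposition $x-h=(x-p)+(p-h)$ with $x-p\in\spn\{u\}=(\pa H)^{\perp}$ and $p-h\in\ker u=\pa H$ gives the Pythagorean identity that identifies $p$ as the unique nearest point. The paper states this as a cited fact from \cite[Example~29.18]{BC2017} without proof, and your argument is exactly the standard one used there, so there is nothing further to compare.
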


\begin{fact} {\rm  \cite[Remark~2.9]{Oy2020ProjectionHH}} \label{fact:FactWFactH}
	Let $u \in \mathcal{H} \smallsetminus \{0\}$ and $\eta \in \mathbb{R}$. Set $W:=\{x \in \mathcal{H} ~:~ \innp{x,u} \leq \eta\}$ and $H := \{ x \in \mathcal{H} ~:~ \innp{x,u} = \eta \}$. 
	Then $W \neq \varnothing$,  $W^{c} \neq \varnothing$,  and 
	$(\forall x \in W^{c})$ $\Pro_{W}x =x+ \frac{\eta - \innp{x,u}}{\norm{u}^{2}}u= \Pro_{H}x$. 
\end{fact}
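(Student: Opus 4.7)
The statement is an elementary projection identity, and my plan is to prove it directly from the variational characterization of projections onto closed convex sets, combined with the earlier facts already recorded in this section. The key observation is that the candidate point
\[
p := x + \frac{\eta - \innp{x,u}}{\norm{u}^{2}} u
\]
is essentially forced on us by \cref{fact:Projec:Hyperplane}: it is already the projection of $x$ onto $H$, and since $H \subseteq W$, if I can show $p \in W$ and that $p$ actually coincides with $\Pro_{W} x$, both equalities will follow at once via \cref{fact:AsubseteqB:Projection}.

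The plan is to proceed in three short steps. First, I would dispense with the nonemptiness claims: for $W \neq \varnothing$ observe that the vector $(\eta/\norm{u}^{2})u$ satisfies $\innp{\cdot,u}=\eta \leq \eta$, and for $W^{c} \neq \varnothing$ pick any scalar $t$ large enough that $t\norm{u}^{2}>\eta$ and take $tu$. Second, fix $x \in W^{c}$, so that $\innp{x,u}>\eta$, and compute
\[
\innp{p,u} \;=\; \innp{x,u} + \frac{\eta - \innp{x,u}}{\norm{u}^{2}}\norm{u}^{2} \;=\; \eta,
\]
which shows $p \in H \subseteq W$. Third, to confirm $p = \Pro_{W}x$, I would invoke the standard obtuse-angle characterization and verify that for every $y \in W$,
\[
\innp{x-p,\,y-p} \;=\; \frac{\innp{x,u}-\eta}{\norm{u}^{2}}\,\bigl(\innp{y,u}-\eta\bigr) \;\leq\; 0,
\]
where the sign follows because the first factor is strictly positive on $W^{c}$ while the second is nonpositive on $W$.

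Once $p = \Pro_{W}x$ is established, the chain $\Pro_{W}x = p = \Pro_{H}x$ is immediate: the right-hand equality is \cref{fact:Projec:Hyperplane} applied to $H$, and alternatively one can quote \cref{fact:AsubseteqB:Projection} with $A=H \subseteq B=W$ using $\Pro_{W}x \in H$. I do not anticipate a serious obstacle here; the only conceptual point is to recognize that $x-p$ is a positive multiple of $u$, so the inner product $\innp{x-p,y-p}$ reduces to a one-dimensional sign check that is exactly the defining inequality of $W$. The proof should fit in half a page.
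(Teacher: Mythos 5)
Your argument is correct and complete: the nonemptiness claims are trivial, the computation $\innp{p,u}=\eta$ places $p$ in $H\subseteq W$, the obtuse-angle characterization $\innp{x-p,y-p}=\frac{\innp{x,u}-\eta}{\norm{u}^{2}}(\innp{y,u}-\eta)\leq 0$ for $y\in W$ identifies $p$ as $\Pro_{W}x$, and either \cref{fact:Projec:Hyperplane} or \cref{fact:AsubseteqB:Projection} closes the chain. The paper does not prove this statement at all — it is quoted as a Fact from \cite[Remark~2.9]{Oy2020ProjectionHH} — so there is no internal proof to compare against; your verification is the standard one and would serve as a self-contained substitute for the citation.
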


\begin{lemma} \label{lem:WH:mathcalH:RWinW}
	Let $u \in \mathcal{H} \smallsetminus \{0\}$ and let $\eta \in \mathbb{R}$. Set $W:=\{x \in \mathcal{H} ~:~ \innp{x, u} \leq \eta \} $ and $H:=\{x \in \mathcal{H} ~:~ \innp{x, u} = \eta \}.  $
	Let $x \in \mathcal{H} \smallsetminus W$. Then $\R_{H}x = \R_{W}x \in \inte W$.
\end{lemma}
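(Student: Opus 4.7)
The plan is to use the preceding Fact~2.4 to identify $\R_{H}x$ with $\R_{W}x$, and then explicitly compute $\innp{\R_{H}x, u}$ to verify that the reflected point lies strictly on the $W$-side of the hyperplane $H$.

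First, since $x \in \mathcal{H} \smallsetminus W = W^{c}$, \cref{fact:FactWFactH} gives $\Pro_{W}x = \Pro_{H}x$, so by the definition $\R_{C} = 2\Pro_{C} - \Id$ we get at once
\begin{equation*}
\R_{W}x = 2\Pro_{W}x - x = 2\Pro_{H}x - x = \R_{H}x.
\end{equation*}
Next, using the explicit formula from \cref{fact:Projec:Hyperplane},
\begin{equation*}
\R_{H}x = 2\Pro_{H}x - x = x + 2\,\frac{\eta - \innp{x,u}}{\norm{u}^{2}}\, u,
\end{equation*}
so taking the inner product with $u$ yields $\innp{\R_{H}x, u} = \innp{x,u} + 2(\eta - \innp{x,u}) = 2\eta - \innp{x,u}$. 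Since $x \notin W$, we have $\innp{x,u} > \eta$, hence $\innp{\R_{H}x, u} < \eta$, which places $\R_{H}x$ in the open halfspace $\{y \in \mathcal{H} : \innp{y,u} < \eta\} = \inte W$.

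There is no real obstacle here; the only thing worth double-checking is the identification $\inte W = \{y : \innp{y,u} < \eta\}$, which is standard for a (closed) halfspace determined by a nonzero continuous linear functional. With this in hand, the three lines above close the proof.
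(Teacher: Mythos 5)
Your proof is correct and follows essentially the same route as the paper: invoke \cref{fact:FactWFactH} to identify $\R_{W}x$ with $\R_{H}x$, then compute $\innp{\R_{H}x,u}-\eta = \eta - \innp{x,u} < 0$ to conclude membership in $\inte W$. No issues.
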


\begin{proof}
	According to \cref{fact:FactWFactH}, $\R_{W}x= \R_{H}x= 2\Pro_{H}(x)-x =x + 2\frac{\eta -\innp{x,u}}{\norm{u}^{2}}u$. Clearly, $x \notin W$ means $ \innp{x, u} -\eta >0$. Hence, $	\innp{\R_{W}x, u} -\eta  = \Innp{x +2 \frac{\eta- \innp{x, u}}{\norm{u}^{2}} u , u} -\eta  
	= \innp{x,u} -\eta +2 \frac{\eta- \innp{x, u}}{\norm{u}^{2}} \innp{u , u}   
	=\eta- \innp{x, u} <0,$
	which implies that $\R_{H}x=\R_{W}x \in \inte W$. 
\end{proof}

\subsection*{Sufficient conditions for weak convergence}
\begin{definition} {\rm \cite[Definition~5.1]{BC2017}} \label{defn:Fejer}
	Let $C$ be a nonempty  subset of $\mathcal{H}$ and let $(x_{k})_{k \in \mathbb{N}}$ be a sequence in  $\mathcal{H}$.  Then $(x_{k})_{k \in \mathbb{N}}$  is \emph{Fej\'er monotone with respect to $C$} if 
	\begin{align*}
	(\forall x \in C) (\forall k \in \mathbb{N}) \quad \norm{x_{k+1} -x} \leq \norm{x_{k} -x}.
	\end{align*}
\end{definition}

\begin{definition} \label{defn:demiclosedness} \cite[Definition~4.26]{BC2017}
	Let $\mathcal{D}$ be a nonempty weakly sequentially closed subset of $\mathcal{H}$, let $T:\mathcal{D} \to \mathcal{H}$, and let $u \in \mathcal{H}$. Then $T$ is \emph{demiclosed at $u$} if,  for every sequence $(x_{k})_{k \in \mathbb{N}}$ in $\mathcal{D}$ and every $x \in \mathcal{D} $ such that $x_{k} \weakly x$ and $Tx_{k} \to u$, we have $Tx =u$. In addition, $T$ is \emph{demiclosed} if it is demiclosed at every point in $\mathcal{D}$. 
\end{definition}

\begin{definition} {\rm \cite[Definition~4.1(iv)]{BC2017} and \cite[Definition~2.2]{CRZ2018}} \label{defn:SQNE}
	Let $\mathcal{D}$ be a nonempty subset of $\mathcal{H}$ and let $\rho \in \mathbb{R}_{+}$.	We say  $T:\mathcal{D} \to \mathcal{H}$ is \emph{$\rho$-strongly quasinonexpansive} ($\rho$-SQNE), if 
	\begin{align}\label{eq:defn:SQNE}
	(\forall x \in \mathcal{D}) (\forall z \in \Fix T) \quad \norm{T(x )-z}^{2} + \rho \norm{T(x)-x}^{2} \leq \norm{x -z}^{2}.
	\end{align}
	In particular, if $\rho=0$ and $\rho=1$ in \cref{eq:defn:SQNE}, then $T$ is called \emph{quasinonexpansive} and  \emph{firmly quasinonexpansive}, respectively.
\end{definition}

\begin{theorem}\label{theorem:quasidemiweakconver}
	Let $\mathcal{D}$ be a nonempty weakly sequentially closed subset of $\mathcal{H}$, let $T:\mathcal{D} \to \mathcal{D}$ and let $\rho \in \mathbb{R}_{++}$.  Assume that $T$ is $\rho$-SQNE, that $\Fix T \neq \varnothing$, and that $\Id -T$ is  demiclosed at $0$.  Let $x  \in \mathcal{D}$.  Then $(T^{k}x)_{k \in \mathbb{N}}$ weakly converges to one point in $\Fix T$. Moreover, if $\Fix T$ is a closed affine subspace of $\mathcal{H}$, then 
	$T^{k}x \weakly \Pro_{\Fix T}x$, and $(\forall k \in \mathbb{N})$ $\Pro_{\Fix T}T^{k}x= \Pro_{\Fix T}x$.
\end{theorem}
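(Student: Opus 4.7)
\bigskip

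\noindent\textbf{Proof proposal.} The plan is to run the standard Krasnoselskii--Mann--Opial argument, adapted to the locally-defined setting. First, I would invoke the $\rho$-SQNE hypothesis with arbitrary $z \in \Fix T$ and the iterates $x_k := T^k x$ to obtain
\begin{equation*}
\norm{x_{k+1}-z}^{2}+\rho\norm{x_{k+1}-x_{k}}^{2}\le\norm{x_{k}-z}^{2}.
\end{equation*}
This immediately yields two facts: (a) the sequence $(x_k)_{k\in\mathbb{N}}$ is Fej\'er monotone with respect to $\Fix T$ (in particular bounded), and (b) summing telescopically over $k$ while using $\rho>0$ and $\Fix T\neq\varnothing$ gives $\sum_{k}\norm{x_{k+1}-x_{k}}^{2}<\infty$, hence $(\Id-T)x_{k}=x_{k}-x_{k+1}\to 0$.

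Next, I would extract weak cluster points: since $(x_k)$ is bounded and $\mathcal{D}$ is weakly sequentially closed, any weak subsequential limit $\bar{x}$ lies in $\mathcal{D}$. Combining $x_{k_j}\weakly\bar{x}$ with $(\Id-T)x_{k_j}\to 0$ and the demiclosedness of $\Id-T$ at $0$ forces $\bar{x}\in\Fix T$. Uniqueness of the weak cluster point then follows from the classical Opial-type argument for Fej\'er monotone sequences: for any two cluster points $y_1,y_2\in\Fix T$, both $\norm{x_k-y_1}$ and $\norm{x_k-y_2}$ converge (being nonincreasing and nonnegative), and the identity $\norm{x_k-y_1}^{2}-\norm{x_k-y_2}^{2}=\norm{y_1}^{2}-\norm{y_2}^{2}-2\innp{x_k,y_1-y_2}$, evaluated along the two subsequences, forces $y_1=y_2$. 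Thus $x_k\weakly\bar{x}$ for some $\bar{x}\in\Fix T$.

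For the affine-subspace conclusion, set $F:=\Fix T$ with parallel subspace $U:=\pa F$. The key step is to upgrade Fej\'er monotonicity into a perpendicularity statement: fix any $y\in F$ and $u\in U$; since $y+tu\in F$ for every $t\in\mathbb{R}$, Fej\'er monotonicity gives
\begin{equation*}
2t\innp{x_{k}-x_{k+1},u}\le\norm{x_k-y}^{2}-\norm{x_{k+1}-y}^{2}
\end{equation*}
for every $t\in\mathbb{R}$. Letting $t\to\pm\infty$ forces $\innp{x_k-x_{k+1},u}=0$, so $x_k-x_{k+1}\in U^{\perp}$ and hence $x_k-x\in U^{\perp}$ for every $k$. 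Writing $F=v+U$ and using $\Pro_{F}=v+\Pro_{U}(\cdot-v)$, one obtains $\Pro_{F}x_{k}=\Pro_{F}x$ for all $k$. Finally, $\Pro_{F}$ is the restriction of an affine bounded linear operator and is therefore weakly sequentially continuous; passing to the weak limit in $\Pro_{F}x_{k}=\Pro_{F}x$ yields $\Pro_{F}\bar{x}=\Pro_{F}x$, and since $\bar{x}\in F$ gives $\Pro_{F}\bar{x}=\bar{x}$, we conclude $\bar{x}=\Pro_{F}x$.

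The only delicate point I expect is verifying that the weakly sequentially closed assumption on $\mathcal{D}$ suffices to place the cluster points back inside $\mathcal{D}$ so that demiclosedness may be applied; everything else is bookkeeping built on the classical Fej\'er/Opial template.
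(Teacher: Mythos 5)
Your proof is correct and follows the same skeleton as the paper's: Fej\'er monotonicity from quasinonexpansiveness, asymptotic regularity from the $\rho$-SQNE inequality, and demiclosedness of $\Id-T$ at $0$ to identify weak cluster points as fixed points. The differences are in what you prove versus what the paper cites. The paper telescopes the SQNE inequality only along the weakly convergent subsequence (which suffices because the distances $\norm{T^{k}x-z}$ are monotone), whereas you sum over all $k$ and obtain $(\Id-T)x_{k}\to 0$ for the full sequence --- slightly cleaner and equally valid. More substantially, the paper delegates the uniqueness of the weak cluster point and the identity $\Pro_{\Fix T}T^{k}x=\Pro_{\Fix T}x$ to \cite[Theorem~5.5 and Proposition~5.9]{BC2017}, while you reprove both: the Opial-type argument via the identity $\norm{x_{k}-y_{1}}^{2}-\norm{x_{k}-y_{2}}^{2}=\norm{y_{1}}^{2}-\norm{y_{2}}^{2}-2\innp{x_{k},y_{1}-y_{2}}$ is the standard content of Theorem~5.5, and your perpendicularity argument (testing Fej\'er monotonicity against $y+tu$ with $u\in\pa\Fix T$ and letting $t\to\pm\infty$ to get $x_{k}-x_{k+1}\in(\pa\Fix T)^{\perp}$, then using the affine decomposition of $\Pro_{\Fix T}$ and its weak sequential continuity) is a correct self-contained proof of the Proposition~5.9 part. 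Your flagged concern about placing cluster points back in $\mathcal{D}$ is handled exactly as you say: boundedness comes from Fej\'er monotonicity with respect to the nonempty set $\Fix T$, and weak sequential closedness of $\mathcal{D}$ puts the limit in $\mathcal{D}$ so that \cref{defn:demiclosedness} applies. No gaps.
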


\begin{proof}
	Note that, by \cref{defn:SQNE}, $T$ is $\rho$-SQNE implies that $T$ is  quasinonexpansive. Moreover, because $\Fix T \neq \varnothing$, using \cite[Example~5.3]{BC2017}, we know that  $(T^{k}x)_{k \in \mathbb{N}}$ is Fej\'er monotone with respect to $\Fix T$. Now, in view of \cite[Theorem~5.5 and Proposition~5.9]{BC2017}, it remains to show that very weak sequential cluster point of $(T^{k}x)_{k \in \mathbb{N}}$ belongs to $\Fix T$.
	
	Let $y \in \mathcal{H}$ and  let $(T^{k_{i}}x)_{i \in \mathbb{N}}$ be a subsequence of $(T^{k}x )_{k \in \mathbb{N}}$ such that $T^{k_{i}}x \weakly y$.  Let $z \in \Fix T$. According to  \cite[Proposition~5.4]{BC2017},  $(T^{k}x)_{k \in \mathbb{N}}$ is Fej\'er monotone with respect to $\Fix T$ implies that the limit $L_{z}:= \lim_{k \to \infty} \norm{T^{k}(x)  - z} $ exists. 
	Because $T:\mathcal{D} \to \mathcal{D}$ is $\rho$-SQNE, we have that 
	\begin{align}\label{eq:theorem:quasidemiweakconver}
	(\forall i \in \mathbb{N})  \quad \rho \norm{T(T^{k_{i}}x) -T^{k_{i}}x }^{2} \leq  \norm{T^{k_{i}}(x )  - z}^{2} -\norm{T(T^{k_{i}}x ) - z}^{2}.
	\end{align}
	Summing over $i$ from $0$ to infinity in both sides of \cref{eq:theorem:quasidemiweakconver}, we obtain that 
	\begin{align*}
	\sum_{i=0}^{\infty} \norm{T(T^{k_{i}}x ) -T^{k_{i}}x }^{2} \leq \frac{1}{ \rho} \left(  \norm{T^{k_{0}}(x ) -z}^{2}  -L_{z } \right),
	\end{align*}
	which implies that $T(T^{k_{i}}x ) -T^{k_{i}}x  \to 0$.
	Furthermore, using the assumption, $\Id -T$ is  demiclosed at $0$, and \cref{defn:demiclosedness}, we see that 
	\begin{equation*}
	\label{e:Tdemiclosed}
	\left. 
	\begin{array}{c} T^{k_{i}}x  \weakly y\\
	T(T^{k_{i}}x ) -T^{k_{i}}x  \to 0
	\end{array}
	\right\}
	\;\;  \Rightarrow  \;\; (\Id -T)y=0, \text{ i.e., }
	y\in \Fix T. 
	\end{equation*}
	Altogether, the proof is complete.
\end{proof}

\begin{remark}
	Let $\mathcal{D}$ be a nonempty weakly sequentially closed subset of $\mathcal{H}$ and let $T:\mathcal{D} \to \mathcal{D}$ with $\Fix T \neq \varnothing$.	According to \cite[Definition~4.1 and Theorem~4.27]{BC2017}, $T$ is $1$-SQNE implies that $\Id -T$ is demiclosed. Hence, \cref{theorem:quasidemiweakconver} deduces that if $T$  is $\rho$-SQNE with $\rho \geq 1$, then $(T^{k}x)_{k \in \mathbb{N}}$ weakly converges to one point in $\Fix T$.
\end{remark}

\section{Circumcenter mapping with local properness} \label{sec:CMLocalProper}

In this section, we investigate the locally proper circumcenter mapping. 

Recall that $\mathcal{P}(\mathcal{H})$ is the set of all nonempty 
subsets of $\mathcal{H}$ containing \emph{finitely many}
elements, and that $\I:=\{1,2, \ldots, m\}$.
\subsection*{Circumcenters}
According to  \cite[Proposition~3.3]{BOyW2018},  for every $K$ in $ \mathcal{P}(\mathcal{H})  $, there is at most one point $p \in \aff (K) $ such that $\{\norm{p-x} ~|~x \in K \}$ is a singleton. Hence, the following \cref{defn:Circumcenter} is well-defined.

\begin{definition}[circumcenter operator]  {\rm \cite[Definition~3.4]{BOyW2018}} \label{defn:Circumcenter}
The \emph{circumcenter operator} is 
	\begin{empheq}[box=\mybluebox]{equation*}
	\CCO{} \colon \mathcal{P}(\mathcal{H}) \to \mathcal{H} \cup \{ \varnothing \} \colon K \mapsto \begin{cases} p, \quad ~\text{if}~p \in \aff (K)~\text{and}~\{\norm{p-x} ~|~x \in K \}~\text{is a singleton};\\
	\varnothing, \quad~ \text{otherwise}.
	\end{cases}
	\end{empheq}
	In particular, when $\CCO(K) \in \mathcal{H}$, that is, $\CCO(K) \neq \varnothing$, we say that the circumcenter of $K$ exists and we call $\CCO(K)$ the \emph{circumcenter of $K$}.
\end{definition}
\begin{fact} {\rm \cite[Theorem~8.4]{BOyW2018}} 
	\label{thm:SymForm}
	Suppose that $S:=\{x,y,z\} \subseteq \mathcal{H}$ and denote the cardinality of $S$ by
	$l$. 
	Then exactly one of the following cases occurs: 
	\begin{enumerate}
		\item \label{thm:SymForm:1} $l=1$ and $\CCO(S) =x$.
		\item \label{thm:SymForm:2} $l=2$, say $S=\{u,v\}$, where $u,v \in S$ 
		and $u \neq v$, and $\CCO(S)=\frac{u+v}{2}$.
		\item \label{thm:SymForm:3} 
		$l=3$ and exactly one of the following two cases occurs: 
		\begin{enumerate}
			\item \label{thm:SymForm:3:a} 
			$x, y, z$ are affinely independent; equivalently, 
			$\norm{y-x}\norm{z-x} > \innp{y-x,z-x}$, and 
			\begin{align*}
			\CCO(S) = \frac{\norm{y-z}^{2} \innp{x-z,x-y}x+ \norm{x-z}^{2} \innp{y-z,y-x}y+ \norm{x-y}^{2} \innp{z-x,z-y}z }{2(\norm{y-x}^{2}\norm{z-x}^{2}- \innp{y-x, z-x}^{2})}.
			\end{align*}
			\item \label{thm:SymForm:3:b} 
			$x, y, z$ are affinely dependent; equivalently, 
			$\norm{y-x}\norm{z-x} = \innp{y-x,z-x}$, and $\CCO(S) =
			\varnothing $. 
		\end{enumerate}
	\end{enumerate}
\end{fact}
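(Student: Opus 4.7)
The plan is to dispatch the cases $l = 1$ and $l = 2$ directly and handle $l = 3$ by parameterizing an arbitrary candidate circumcenter inside $\aff S$ and converting equidistance into a $2 \times 2$ linear system. For $l = 1$, $\aff S = \{x\}$ and $\{\norm{x - x}\} = \{0\}$ is a singleton, forcing $\CCO(S) = x$. For $l = 2$ with $S = \{u, v\}$, the midpoint $(u + v)/2$ lies in $\aff\{u,v\}$ and is equidistant from $u$ and $v$, and uniqueness inside $\aff S$ is guaranteed by \cref{defn:Circumcenter}.

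For $l = 3$, I would set $a := y - x$ and $b := z - x$, so that $\aff S = x + \spn\{a, b\}$ and an arbitrary candidate takes the form $p = x + s a + t b$ with $s, t \in \mathbb{R}$. Expanding $\norm{p-x}^{2} = \norm{p-y}^{2}$ and $\norm{p-x}^{2} = \norm{p-z}^{2}$ and collecting terms yields the system
\begin{align*}
(2s - 1)\norm{a}^{2} + 2t\,\innp{a,b} &= 0, \\
2s\,\innp{a,b} + (2t - 1)\norm{b}^{2} &= 0,
\end{align*}
whose coefficient determinant equals $4\Delta$ with $\Delta := \norm{a}^{2}\norm{b}^{2} - \innp{a,b}^{2}$. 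By the Cauchy--Schwarz inequality, $\Delta \geq 0$, with $\Delta = 0$ precisely when $\{a, b\}$ is linearly dependent, i.e., when $x, y, z$ are affinely dependent; this yields the characterizations separating (3a) from (3b). In case (3b), the three distinct points are collinear, the three pairwise midpoints on their affine line are distinct, so no point on that line is equidistant from all three, hence $\CCO(S) = \varnothing$.

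In case (3a), $\Delta > 0$ and Cramer's rule produces
\begin{align*}
s = \frac{\norm{b}^{2}\,\innp{a-b,\, a}}{2\Delta}, \qquad t = \frac{\norm{a}^{2}\,\innp{b-a,\, b}}{2\Delta}.
\end{align*}
Rewriting $\innp{a-b, a} = \innp{y-z, y-x}$, $\innp{b-a, b} = \innp{z-y, z-x}$, $\norm{a} = \norm{x-y}$, and $\norm{b} = \norm{x-z}$ translates $sy + tz$ into the $y$- and $z$-summands of the claimed formula. Writing $p = (1 - s - t)x + sy + tz$, matching the $x$-coefficient with the claimed $\norm{y-z}^{2}\,\innp{x-z, x-y}/(2\Delta)$ reduces to showing that the three claimed coefficients sum to $1$; substituting $\norm{y-z}^{2} = \norm{a - b}^{2} = \norm{a}^{2} - 2\innp{a,b} + \norm{b}^{2}$ and collecting yields numerator $2\Delta$, confirming the identity.

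The main obstacle is purely algebraic bookkeeping: unpacking Cramer's rule into the symmetric expression in the statement and verifying the sum-to-one identity so that the $x$-coefficient reads as claimed. Uniqueness of $\CCO(S)$ inside $\aff S$ is automatic since the $2 \times 2$ system has a unique solution when $\Delta > 0$, so the circumcenter is well-defined per \cref{defn:Circumcenter}.
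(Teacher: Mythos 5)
The paper offers no proof of this statement---it is imported verbatim as a Fact from the earlier work of Bauschke, Ouyang and Wang \cite[Theorem~8.4]{BOyW2018}---so there is nothing in-paper to compare against; your self-contained derivation is correct and follows the standard route (parameterize $\aff S$ by $p=x+sa+tb$ with $a:=y-x$, $b:=z-x$, reduce equidistance to a $2\times 2$ linear system with determinant $4\Delta$, split on $\Delta=0$ versus $\Delta>0$ via Cauchy--Schwarz, and recover the symmetric formula from Cramer's rule together with the sum-to-one identity, whose expansion does indeed give $2\Delta$). One remark worth recording: what your argument actually proves is that $x,y,z$ are affinely dependent if and only if $\norm{y-x}\norm{z-x}=\abs{\innp{y-x,z-x}}$; the \enquote{equivalently} clauses in the statement omit the absolute value and, read literally, fail when the three distinct points are collinear with $x$ strictly between $y$ and $z$ (there $\innp{y-x,z-x}<0<\norm{y-x}\norm{z-x}$ yet the points are affinely dependent). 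Your $\Delta$-based dichotomy is the mathematically correct one and is all that the formula in case (iii)(a) and the conclusion $\CCO(S)=\varnothing$ in case (iii)(b) require.
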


\begin{lemma}  \label{lemma:normeq}
	Let $\{p, x, y\} \subseteq \mathcal{H}$. Then the following statements are equivalent:
	\begin{enumerate}
		\item\label{lemma:normeq:norm} $\norm{p-x} =\norm{p-y}$.
		\item\label{lemma:normeq:innp} $\innp{p-x,y-x} =\frac{1}{2} \norm{y-x}^{2}$.
		\item\label{lemma:normeq2}  $\innp{p-\frac{x+y}{2},\frac{x+y}{2}-x} =0$.
		\item\label{lemma:normeq:0} $\innp{p-\frac{x+y}{2},y-x} =0$.
		\item \label{lemma:normeq:innp-}  $\innp{x-p,y-p} =\norm{p-x}^{2} - \frac{1}{2} \norm{y-x}^{2}$.
	\end{enumerate}
\end{lemma}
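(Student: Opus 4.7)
The plan is to establish the five equivalences by a short chain of elementary inner-product manipulations, using the polarization-type identity $\norm{a-b}^{2}=\norm{a}^{2}-2\innp{a,b}+\norm{b}^{2}$ together with the observation $\frac{x+y}{2}-x=\frac{y-x}{2}$.

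First I would show \cref{lemma:normeq:norm}$\Leftrightarrow$\cref{lemma:normeq:innp}. Squaring \cref{lemma:normeq:norm} gives $\norm{p-x}^{2}=\norm{p-y}^{2}$, and writing $p-y=(p-x)-(y-x)$ and expanding yields
\begin{equation*}
\norm{p-y}^{2}=\norm{p-x}^{2}-2\innp{p-x,y-x}+\norm{y-x}^{2}.
\end{equation*}
Equality with $\norm{p-x}^{2}$ is thus equivalent to $\innp{p-x,y-x}=\tfrac{1}{2}\norm{y-x}^{2}$, which is \cref{lemma:normeq:innp}.

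Next, \cref{lemma:normeq2}$\Leftrightarrow$\cref{lemma:normeq:0} is immediate from $\frac{x+y}{2}-x=\frac{1}{2}(y-x)$, since a factor of $\tfrac{1}{2}$ in one slot of an inner product does not affect its vanishing. For \cref{lemma:normeq:0}$\Leftrightarrow$\cref{lemma:normeq:innp}, I would decompose $p-\frac{x+y}{2}=(p-x)-\frac{1}{2}(y-x)$ and compute
\begin{equation*}
\Innp{p-\tfrac{x+y}{2},\,y-x}=\innp{p-x,y-x}-\tfrac{1}{2}\norm{y-x}^{2},
\end{equation*}
so the left-hand side is zero exactly when \cref{lemma:normeq:innp} holds.

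Finally, for \cref{lemma:normeq:innp}$\Leftrightarrow$\cref{lemma:normeq:innp-}, I would write $y-p=(y-x)-(p-x)$ and expand:
\begin{equation*}
\innp{x-p,y-p}=-\innp{p-x,y-x}+\norm{p-x}^{2}.
\end{equation*}
Thus \cref{lemma:normeq:innp-} is equivalent to $\innp{p-x,y-x}=\tfrac{1}{2}\norm{y-x}^{2}$, closing the loop. There is no substantive obstacle here; the whole argument is a sequence of one-line inner-product expansions, and I would simply arrange them as \cref{lemma:normeq:norm}$\Leftrightarrow$\cref{lemma:normeq:innp}$\Leftrightarrow$\cref{lemma:normeq:0}$\Leftrightarrow$\cref{lemma:normeq2} together with \cref{lemma:normeq:innp}$\Leftrightarrow$\cref{lemma:normeq:innp-}.
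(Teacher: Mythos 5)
Your proposal is correct and follows essentially the same route as the paper: elementary inner-product expansions linking the five statements through \cref{lemma:normeq:innp}. The only cosmetic difference is that the paper cites a prior result for \cref{lemma:normeq:norm}$\Leftrightarrow$\cref{lemma:normeq:innp} whereas you expand it directly, which is equally fine.
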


\begin{proof}
	By \cite[Proposition~3.1]{BOyW2018}, we know that \cref{lemma:normeq:norm} $\Leftrightarrow$ \cref{lemma:normeq:innp}.
	Because $y-x= 2 \left( \frac{x+y}{2} -x \right)$, we have that 
	\begin{align*}
	\innp{p-x,y-x} =\frac{1}{2} \norm{y-x}^{2} & \Leftrightarrow  2	\Innp{p-x,\frac{x+y}{2} -x } =2 \Norm{\frac{x+y}{2} -x }^{2}\\
	& \Leftrightarrow   	\Innp{p-\frac{x+y}{2} +\frac{x+y}{2} -x,\frac{x+y}{2} -x } =  \Norm{\frac{x+y}{2} -x }^{2}\\
	& \Leftrightarrow   \Innp{p-\frac{x+y}{2},\frac{x+y}{2}-x} =0\\
	& \Leftrightarrow  \Innp{p-\frac{x+y}{2},y-x} =0,
	\end{align*} 
	which shows the equivalence of \cref{lemma:normeq:innp}, \cref{lemma:normeq2} and \cref{lemma:normeq:0}.  
	
	On the other hand, 
	\begin{align*}
		\innp{p-x,y-x} =\frac{1}{2} \norm{y-x}^{2}  &\Leftrightarrow  \innp{p-x,y-p+p-x} =\frac{1}{2} \norm{y-x}^{2}\\
		& \Leftrightarrow  \innp{x-p,y-p} =\norm{p-x}^{2} - \frac{1}{2} \norm{y-x}^{2},
	\end{align*}
	which necessitates that  \cref{lemma:normeq:innp} $\Leftrightarrow $ \cref{lemma:normeq:innp-}.
	Altogether,  all of the equivalences hold.
\end{proof}

\begin{lemma} \label{lemma:eqequivalence}
	Let $\{p, x, y\} \subseteq \mathcal{H}$. Assume that $\norm{p-x}=\norm{p-y}$. Then the following statements hold:
	\begin{enumerate}
		\item \label{lemma:eqequivalence:equi} $\norm{y-x}=2\norm{x-p}$ if and only if $p=\frac{x+y}{2}$.
		\item \label{lemma:eqequivalence:inequi} $\norm{y-x}<2\norm{x-p}$ if and only if $p\neq \frac{x+y}{2}$.
	\end{enumerate}
\end{lemma}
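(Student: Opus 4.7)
The plan is to reduce both parts to a single Pythagorean identity relating $\|x-p\|$, $\|y-x\|$, and $\|p-\tfrac{x+y}{2}\|$, after which (i) and (ii) follow instantly from the nonnegativity and positivity of the last term.

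First I would invoke \cref{lemma:normeq}\cref{lemma:normeq:0}, which under the hypothesis $\norm{p-x}=\norm{p-y}$ gives the orthogonality relation
\begin{equation*}
\Innp{p-\tfrac{x+y}{2},\,y-x}=0.
\end{equation*}
Then I would write the decomposition $x-p=\bigl(x-\tfrac{x+y}{2}\bigr)+\bigl(\tfrac{x+y}{2}-p\bigr)=-\tfrac{1}{2}(y-x)+\bigl(\tfrac{x+y}{2}-p\bigr)$, note that the two summands are orthogonal by the displayed identity, and apply the Pythagorean theorem to obtain
\begin{equation*}
\norm{x-p}^{2}=\tfrac{1}{4}\norm{y-x}^{2}+\Norm{p-\tfrac{x+y}{2}}^{2},
\end{equation*}
or equivalently $4\norm{x-p}^{2}-\norm{y-x}^{2}=4\bigl\|p-\tfrac{x+y}{2}\bigr\|^{2}\geq 0$.

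With this identity in hand, part \cref{lemma:eqequivalence:equi} is immediate: $\norm{y-x}=2\norm{x-p}$ is equivalent to the left-hand side vanishing, which happens if and only if $\bigl\|p-\tfrac{x+y}{2}\bigr\|=0$, that is, $p=\tfrac{x+y}{2}$. Part \cref{lemma:eqequivalence:inequi} follows by the same identity, since $\norm{y-x}<2\norm{x-p}$ (recall the inequality $\norm{y-x}\leq 2\norm{x-p}$ is automatic from the identity) is equivalent to $\bigl\|p-\tfrac{x+y}{2}\bigr\|>0$, i.e., $p\neq\tfrac{x+y}{2}$. There is no serious obstacle here; the only thing to be careful about is to cite the correct equivalent form from \cref{lemma:normeq} so that the orthogonality step is a one-line consequence rather than a recomputation.
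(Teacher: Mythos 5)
Your proof is correct, and it takes a cleaner route than the paper's. You derive the single Pythagorean identity $\norm{x-p}^{2}=\tfrac{1}{4}\norm{y-x}^{2}+\bigl\|p-\tfrac{x+y}{2}\bigr\|^{2}$ by combining the orthogonality relation \cref{lemma:normeq}\cref{lemma:normeq:0} with the decomposition $x-p=-\tfrac{1}{2}(y-x)+\bigl(\tfrac{x+y}{2}-p\bigr)$, and then both \cref{lemma:eqequivalence:equi} and \cref{lemma:eqequivalence:inequi} drop out at once, together with the a priori inequality $\norm{y-x}\leq 2\norm{x-p}$. The paper instead proves \cref{lemma:eqequivalence:equi} by expanding $\norm{y-x}^{2}=\norm{(y-p)+(p-x)}^{2}$ into a chain of equivalences ending in $\innp{\tfrac{x+y}{2}-p,p-x}=0$, then exploits the symmetry in $x$ and $y$ to also obtain $\innp{\tfrac{x+y}{2}-p,p-y}=0$, and adds the two identities to conclude $p=\tfrac{x+y}{2}$; for \cref{lemma:eqequivalence:inequi} it separately invokes the triangle inequality to get $\norm{y-x}\leq 2\norm{x-p}$ and then reduces to \cref{lemma:eqequivalence:equi}. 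Your version buys economy: one identity handles the bound, the equality case, and the strict-inequality case simultaneously, and it makes actual use of \cref{lemma:normeq}, which the paper proves immediately beforehand but does not cite here. The paper's version is more self-contained at this spot (it never needs the orthogonality lemma) but is longer and splits into two arguments. Both are fully rigorous.
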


\begin{proof}
	\cref{lemma:eqequivalence:equi}: If $p=\frac{x+y}{2}$, then clearly $2\norm{x-p}=2\Norm{x-\frac{x+y}{2}} =\norm{y-x}$.
	
	Assume that $\norm{y-x}=2\norm{x-p}$. Because $\norm{p-x}=\norm{p-y}$, 
	\begin{subequations}\label{eq:lemma:eqequivalence:equi}
		\begin{align}
		\norm{y-x}=2\norm{x-p}  &\Leftrightarrow \norm{y-p}^{2} +2\innp{y-p,p-x} +\norm{p-x}^{2} =4 \norm{x-p}^{2}\\
		&\Leftrightarrow \innp{y-p,p-x} =\norm{x-p}^{2}\\
		&\Leftrightarrow  \innp{y-p +x-p ,p-x} =0\\
		& \Leftrightarrow  \Innp{\frac{x+y}{2}-p , p-x} =0. \label{eq:lemma:eqequivalence:equi:x}
		\end{align}
	\end{subequations}
	Note that $\norm{p-x}=\norm{p-y}$ implies that $	\norm{y-x}=2\norm{x-p}  \Leftrightarrow \norm{x-y}=2\norm{y-p}$. Combine this with the calculation of \cref{eq:lemma:eqequivalence:equi} to ensure that 
	\begin{align}\label{eq:lemma:eqequivalence:equi:y}
	\Innp{\frac{y+x}{2}-p , p-y} =0.
	\end{align}
	Now \cref{eq:lemma:eqequivalence:equi:x} and \cref{eq:lemma:eqequivalence:equi:y} add up to 
	\begin{align*}
	\Innp{\frac{y+x}{2}-p , 2p-x-y} =0 \Leftrightarrow	\Innp{\frac{y+x}{2}-p , p-\frac{y+x}{2}} =0   \Leftrightarrow p=\frac{x+y}{2}.
	\end{align*}
	Altogether, \cref{lemma:eqequivalence:equi} holds.
	
	\cref{lemma:eqequivalence:inequi}: Using the assumption  $\norm{p-x}=\norm{p-y}$ and the triangle inequality, we know that $\norm{y-x}\leq \norm{y-p} +\norm{p-x}=2\norm{x-p}$. Then \cref{lemma:eqequivalence:inequi} follows directly from 	\cref{lemma:eqequivalence:equi}.
\end{proof}

\begin{theorem}\label{lemma:pxyz}
	Let $\{ x, y,z\} \subseteq \mathcal{H}$. 	Assume that $\norm{x-y}=\norm{x-z}$.  Then 
	\begin{align*}
	\CCO{( \{  x, y,z \} )} =\begin{cases}
	x, \quad \quad\quad\quad   \text{if } x=y;\\
	\varnothing, \quad \quad\quad\quad   \text{if } x\neq y \text{ and } x = \frac{y+z}{2};\\
	x +  \alpha ( y -x)+\alpha ( z -x),  ~\text{where } \alpha:= \left( 4 -\frac{ \norm{y-z}^{2}}{ \norm{y-x}^{2}}  \right)^{-1} \in \left[ \frac{1}{4},+\infty  \right[\, , \quad  \text{otherwise}.
	\end{cases}
	\end{align*}
\end{theorem}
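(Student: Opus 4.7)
The plan is to partition the analysis according to the three outputs of $\CCO$ given by \cref{thm:SymForm} applied to $S := \{x, y, z\}$. If $x = y$, then $\norm{x-z} = \norm{x-y} = 0$ forces $S$ to collapse to a singleton, and \cref{thm:SymForm}\cref{thm:SymForm:1} yields $\CCO(S) = x$. If $x \neq y$ and $x = \frac{y+z}{2}$, then $z - x = -(y-x) \neq 0$, so $y-x$ and $z-x$ are linearly dependent; the three points are thus affinely dependent, placing us in \cref{thm:SymForm}\cref{thm:SymForm:3:b}, which returns $\CCO(S) = \varnothing$.

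For the remaining case $x \neq y$ and $x \neq \frac{y+z}{2}$, I first dispose of the sub-case $y = z$: here $S = \{x, y\}$ with $x \neq y$, so by \cref{thm:SymForm}\cref{thm:SymForm:2} we have $\CCO(S) = \frac{x+y}{2}$; this matches the claimed formula, since $\norm{y-z} = 0$ gives $\alpha = \tfrac{1}{4}$ and then $x + \tfrac{1}{4}(y-x) + \tfrac{1}{4}(z-x) = x + \tfrac{1}{2}(y-x) = \frac{x+y}{2}$. For the sub-case $y \neq z$, I next verify affine independence: any dependency $y-x = t(z-x)$ forces $\abs{t} = 1$ from $\norm{y-x} = \norm{z-x} > 0$, but $t = 1$ gives $y = z$ and $t = -1$ gives $x = \frac{y+z}{2}$, both contradicting our standing hypothesis. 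Hence we may invoke \cref{thm:SymForm}\cref{thm:SymForm:3:a}.

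The main computation then exploits the symmetry $\norm{x-y} = \norm{x-z}$. Writing $r := \norm{x-y} = \norm{x-z}$ and $d := \norm{y-z}$, the polarization identity $\innp{u,v} = \tfrac{1}{2}(\norm{u}^{2} + \norm{v}^{2} - \norm{u-v}^{2})$ gives $\innp{y-z, y-x} = \innp{z-y, z-x} = \tfrac{d^{2}}{2}$ and $\innp{x-z, x-y} = \innp{y-x, z-x} = r^{2} - \tfrac{d^{2}}{2}$. Substituting into \cref{thm:SymForm}\cref{thm:SymForm:3:a}, a common factor $\tfrac{d^{2}}{2}$ cancels between numerator and denominator, leaving
\begin{equation*}
\CCO(S) \;=\; \frac{(2r^{2} - d^{2})\, x + r^{2}(y + z)}{4r^{2} - d^{2}} \;=\; x + \alpha(y-x) + \alpha(z-x),
\end{equation*}
with $\alpha = r^{2}/(4r^{2} - d^{2}) = (4 - d^{2}/r^{2})^{-1}$. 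Strict positivity of the denominator is ensured by the triangle inequality $d \leq 2r$, with equality forcing $x = \frac{y+z}{2}$ (excluded); the bound $\alpha \geq \tfrac{1}{4}$ is then immediate from $d^{2}/r^{2} \geq 0$. I expect no deeper obstacle than careful bookkeeping of these polarization identities: the hypothesis $\norm{x-y} = \norm{x-z}$ forces the two off-diagonal inner products to coincide and renders the numerator manifestly symmetric in $y$ and $z$, so the general formula of \cref{thm:SymForm}\cref{thm:SymForm:3:a} collapses cleanly to the closed form above.
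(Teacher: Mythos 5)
Your proof is correct, but it reaches the conclusion by a genuinely different route from the paper's. For the main case ($x\neq y$ and $x\neq\frac{y+z}{2}$) the paper never touches the explicit three-point formula: it defines $p:=x+\alpha(y-x)+\alpha(z-x)$, justifies that $\alpha$ is well defined via \cref{lemma:eqequivalence}\cref{lemma:eqequivalence:inequi} (which is precisely your triangle-inequality observation, isolated as a lemma), and then verifies $\norm{p-x}=\norm{p-y}=\norm{p-z}$ directly through the equivalences of \cref{lemma:normeq}; that verification is uniform in $y,z$ and needs no separate treatment of $y=z$. You instead specialize the closed form of \cref{thm:SymForm}\cref{thm:SymForm:3:a} via polarization identities, which obliges you to peel off the sub-case $y=z$ (correctly dispatched by \cref{thm:SymForm}\cref{thm:SymForm:2}, with $\alpha=\tfrac14$ matching) and to check affine independence first, but it makes the $y\leftrightarrow z$ symmetry and the cancellation of the factor $\norm{y-z}^{2}$ completely transparent; your algebra there is right. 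For the degenerate case $x\neq y$, $x=\frac{y+z}{2}$, the paper argues by contradiction from the uniqueness characterization of the two-point circumcenter, whereas you appeal to the affine-dependence branch \cref{thm:SymForm}\cref{thm:SymForm:3:b}; this is fine provided you read that branch by its primary clause (\enquote{affinely dependent}) rather than the parenthetical inner-product test, since here $\innp{y-x,z-x}=-\norm{y-x}^{2}<\norm{y-x}\norm{z-x}$ and the literal test would misclassify the configuration. In sum, the paper's argument is self-contained modulo the elementary \cref{lemma:normeq} and \cref{lemma:eqequivalence}, while yours is shorter once \cref{thm:SymForm} is granted in full strength.
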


\begin{proof}
	If $x=y$, then  $\norm{x-y}=\norm{x-z}$ implies that $x=y=z$, and that $	\CCO{( \{  x, y,z \} )} =x$.

	Suppose that $x\neq y$ and  $x = \frac{y+z}{2}$. Then clearly, $y\neq z$ and $\aff \{x,y,z  \} = \aff \{ y,z  \}$. Assume to the contrary that $	\CCO{( \{  x, y,z \} )} \neq \varnothing$, say $q:=	\CCO{( \{  x, y,z \} )} $. Then   \cite[Proposition~3.1(i)$\Leftrightarrow$(iii)]{BOyW2018} yields  $q=\frac{1}{2} (y+z)$, and so  $\Norm{x-q}=\Norm{\frac{1}{2} (y+z)- \frac{1}{2} (y+z)} =0 \neq \frac{1}{2} \norm{y-z} =\norm{y-q}=\norm{ z-q}$ which contradicts \cref{defn:Circumcenter}. Hence,  $	\CCO{( \{  x, y,z \} )} =\varnothing$.
	
	Assume that $x \neq y$ and $x \neq  \frac{y+z}{2}$.  
	Applying \cref{lemma:eqequivalence}\cref{lemma:eqequivalence:inequi} with replacing $p, x,$ and $y$   by $x, y,$ and $z$, respectively, we obtain that $\norm{z-y}^{2} < 4 \norm{y-x}^{2}$,
	which implies that $4 -\frac{ \norm{y-z}^{2}}{ \norm{y-x}^{2}} \in \left]0, 4\right]$.  Hence, $\alpha:= \left( 4 -\frac{ \norm{y-z}^{2}}{ \norm{y-x}^{2}}  \right)^{-1} \in \left[ \frac{1}{4},+\infty  \right[\,$ is well-defined. 
	Set 
	\begin{align}\label{eq:lemma:pxyz:p}
	p:=x +  \alpha ( y -x)+\alpha ( z -x).
	\end{align}
	Because, clearly, $	p \in \aff \{x,y,z  \}$, in order to prove $	\CCO{( \{  x, y,z \} )} =p$, it suffices to prove that 	
	\begin{align} \label{eq:lemma:pxyz}
	\norm{p-x}=\norm{p-y}=\norm{p-z}.
	\end{align}
	Applying \cref{lemma:normeq}\cref{lemma:normeq:norm}$\Leftrightarrow$\cref{lemma:normeq:innp-} with $p, x,$ and $y$ substituted by $x, y,$ and $z$ respectively,  we have that 
	\begin{align}\label{eq:lemma:pxyz:norm}
	\norm{x-y}=\norm{x-z} 
	\Leftrightarrow \innp{y-x, z-x} = \norm{x-y}^{2}-\tfrac{1}{2} \norm{z-y}^{2}.
	\end{align}
	Now, 
	\begin{subequations}\label{eq:lemma:pxyz:xy}
		\begin{align}
		\norm{p-x}=\norm{p-y} &~\Leftrightarrow~ \innp{p-x,y-x} =\tfrac{1}{2} \norm{y-x}^{2} \quad \text{by (\cref{lemma:normeq}\cref{lemma:normeq:norm}$\Leftrightarrow$\cref{lemma:normeq:innp})}\\
		&\stackrel{\cref{eq:lemma:pxyz:p}}{\Leftrightarrow} \alpha \innp{z-x, y-x} =  (\tfrac{1}{2} -\alpha  ) \norm{y-x}^{2}\\
		&\stackrel{\cref{eq:lemma:pxyz:norm}}{\Leftrightarrow}-\tfrac{\alpha}{2} \norm{z-y}^{2}  =(\tfrac{1}{2} -2\alpha  ) \norm{y-x}^{2}\\
		&~\Leftrightarrow~ \frac{ \norm{z-y}^{2}  }{\norm{y-x}^{2}} =\tfrac{ 2 (2\alpha -\frac{1}{2}  ) }{\alpha} =4 - \frac{1}{\alpha}\\
		&~\Leftrightarrow~ \alpha =\left( 4 -\frac{ \norm{y-z}^{2}}{ \norm{y-x}^{2}}  \right)^{-1}.
		\end{align}
	\end{subequations}
	
	Moreover, repeat similar calculation in \cref{eq:lemma:pxyz:xy} for  $	\norm{p-x}=\norm{p-z} $ and use $\norm{y-x}=\norm{z-x}$  to obtain that 
	\begin{align} \label{eq:lemma:pxyz:xz}
	\norm{p-x}=\norm{p-z} \Leftrightarrow \alpha =\left( 4 -\frac{ \norm{y-z}^{2}}{ \norm{y-x}^{2}}  \right)^{-1}.
	\end{align}
	Combine \cref{eq:lemma:pxyz:xy} and \cref{eq:lemma:pxyz:xz} with the definition of $\alpha$ to see  that \cref{eq:lemma:pxyz} holds. 	
\end{proof}

\subsection*{Circumcenter mapping}
\begin{definition}[induced   circumcenter mapping] \cite[Definition~3.1]{BOyW2018Proper} \label{defn:cir:map}
	Let  $\mathcal{K} $ and $\mathcal{D} $ be nonempty subsets of $\mathcal{H}$ with $\mathcal{D} \subseteq \mathcal{K} $. Let $(\forall i \in \I)$ $T_{i} : \mathcal{K} \to \mathcal{H}$. Set $\mathcal{S}:=\{ T_{1}, \ldots,  T_{m} \}$ and$(\forall x \in \mathcal{K} )$ $\mathcal{S}(x) :=\{ T_{1}x, \ldots,   T_{m}x\}$. 
	The \emph{circumcenter mapping $\CC{\mathcal{S}}$ induced by $\mathcal{S}$} is 
	\begin{empheq}[box=\mybluebox]{equation*}
	\CC{\mathcal{S}} \colon \mathcal{K}  \to \mathcal{H} \cup \{ \varnothing \} \colon x \mapsto \CCO(\mathcal{S}(x)),
	\end{empheq}
	that is,  for every $x
	\in \mathcal{K} $, if the circumcenter of the set $\mathcal{S}(x)$ defined in
	\cref{defn:Circumcenter} does not exist in $\mathcal{H}$, then $\CC{\mathcal{S}}x= \varnothing
	$. Otherwise, $\CC{\mathcal{S}}x$ is the unique point satisfying the two
	conditions below:
	\begin{enumerate}
		\item $\CC{\mathcal{S}}x \in \aff(\mathcal{S}(x))=\aff\{T_{1}x, \ldots, T_{m-1}x, T_{m}x\}$, and 
		\item $\norm{\CC{\mathcal{S}}x -T_{1}x}=\cdots =\norm{\CC{\mathcal{S}}x -T_{m-1}x}=\norm{\CC{\mathcal{S}}x -T_{m}x}$.
	\end{enumerate}
If $(\forall x \in \mathcal{D} )$ $\CC{\mathcal{S}}x \in \mathcal{D}$, then we say $\CC{\mathcal{S}}$ is   \emph{proper  over $\mathcal{D}$}. In particular, if $\mathcal{D} =\mathcal{H}$, then we omit the phrase \enquote{over $\mathcal{H}$} and also say $\CC{\mathcal{S}}$ is    \enquote{globally proper}; otherwise,  we  say that  $\CC{\mathcal{S}}$ is \emph{locally proper}.
\end{definition}

\begin{example}
Suppose that $\mathcal{H} =\mathbb{R}^{2}$.	Denote by $B_{1}:=\mathbf{B}[(-1,0);1]$ and $B_{2}:=\mathbf{B}[(1,0);1]$. Set $\mathcal{S}_{1} := \{ \Pro_{B_{1}} , \Pro_{B_{2}} \}$, $\mathcal{S}_{2} := \{\Id,  \Pro_{B_{1}} ,  \Pro_{B_{2}}  \}$, and $\mathcal{D} := \mathbb{R}\cdot(0,1)$. 
Then,  by \cref{thm:SymForm} and \cite[Example~3.18]{BC2017}, it is easy to see that $\CC{\mathcal{S}_{1}}$  is both globally proper and proper over $\mathcal{D}$, and that $\CC{\mathcal{S}_{2}}$ is not globally proper but it is proper over $\mathcal{D}$.
\end{example}

Let $\mathcal{D}$ be a nonempty subset of $\mathcal{H}$, let $t \in \mathbb{N} \smallsetminus \{0\}$, and let $(\forall i \in \{1, \ldots, t\})$ $F_{i} :\mathcal{D} \to \mathcal{H}$. Denote by
\begin{empheq}[box=\mybluebox]{equation} \label{eq:Omega}
\Omega ( F_{1}, \ldots, F_{t}) :=  \Big\{ F_{i_{r}}\cdots F_{i_{2}}F_{i_{1}}  ~\Big|~ r \in \mathbb{N},~ i_{0}:=0, ~\mbox{and}~ i_{1}, \ldots,  i_{r} \in \{1, \ldots,t\}    \Big\}
\end{empheq}
which is the set consisting of all finite composition of operators from $\{F_{1}, \ldots, F_{t}\}$. We use the empty product convention, so for $r=0$, $F_{i_{0}}\cdots F_{i_{1}} = \Id$. Hence, $\Id \in \Omega ( F_{1}, \ldots, F_{t}) $.

Note that in the original papers of results presented in  the following \cref{fact:CCSRestrict}, we have the assumption that the corresponding $\CC{\mathcal{S}}$ is proper.
It is easy to see that the global properness assumption in the original papers is too strong, and that using the related proofs, we actually obtain the following results. 
\begin{fact}\label{fact:CCSRestrict} 
	Let $\mathcal{D}$ be a nonempty weakly sequentially closed subset of $\mathcal{H}$. Let $(\forall i \in \I)$ $T_{i} :\mathcal{D} \to \mathcal{H}$. 
	Then the following statements hold:
	\begin{enumerate}
		\item \label{fact:CCSRestrict:fix} {\rm \cite[Proposition~3.10(iii)]{BOyW2018Proper} and \cite[Proposition~2.29]{BOyW2019Isometry}} Suppose that $	\mathcal{S} $ is a subset of $ \Omega ( T_{1}, \ldots, T_{m})$ such that $\{\Id, T_{1}, \ldots, T_{m}  \} \subseteq 	\mathcal{S} $ or   $ \{\Id, T_{1},T_{2}T_{1},  \ldots, T_{m} \cdots T_{2}T_{1} \} \subseteq 	\mathcal{S} $. Then $\Fix \CC{\mathcal{S}} = \cap_{i \in \I} \Fix T_{i}$.  
		\item \label{fact:CCSRestrict:demiclosed} {\rm \cite[Theorem~3.17]{BOyW2018Proper}} Set $	\mathcal{S} := \{   T_{1}, \ldots, T_{m}  \}$. Assume that $(\forall x \in \mathcal{D})$ $\CC{\mathcal{S}} (x) \in \mathcal{D}$,    that    $(\forall i \in \I)$ $\Id -T_{i}$ is  demiclosed at $0$, and that for every $ (  x_{k}  ) _{k \in \mathbb{N}} $ in $ \mathcal{D}$, $x_{k} - \CC{\mathcal{S}} x_{k} \to 0$ implies that $(\forall i \in \I)$  $\CC{\mathcal{S}} x_{k}  -T_{i}x_{k} \to 0$. Then  $\Id -\CC{\mathcal{S}}$ is  demiclosed at $0$, and $\Fix \CC{\mathcal{S}} =\cap_{i \in \I} \Fix T_{i}$
		\item \label{fact:CCSRestrict:Iddemiclosed} {\rm \cite[Theorem~3.20]{BOyW2018Proper}}  Set $	\mathcal{S} := \{  \Id, T_{1}, \ldots, T_{m}  \}$. Assume that $(\forall x \in \mathcal{D})$ $\CC{\mathcal{S}}  (x) \in \mathcal{D}$ and   that    $(\forall i \in \I)$ $ T_{i}$ is nonexpansive. Then    $\Id -\CC{\mathcal{S}}$ is  demiclosed at $0$.	
		
\item \label{fact:CCSRestrict:P} Suppose that $(\forall i \in \I)$ $T_{i} :\mathcal{D} \to \mathcal{H}$ is isometric with $\cap_{i \in \I} \Fix T_{i} \neq \varnothing$. Set $	\mathcal{S} := \{\Id, T_{1}, \ldots, T_{m}  \}$.   Then the following statements hold:
\begin{enumerate}
	\item {\rm \cite[Theorem~3.3]{BOyW2019Isometry}}  \label{fact:CCSRestrict:P:CCS}  $(\forall x \in \mathcal{D})$ $(\forall z \in \cap_{i \in \I} \Fix T_{i} ) $ $ \CC{\mathcal{S}} x =\Pro_{\aff (\mathcal{S} (x))} (z)$.
	\item {\rm \cite[Lemma~3.5]{BOyW2019Isometry}}  \label{fact:CCSRestrict:P:eq}  Let $F: \mathcal{D} \to \mathcal{H}$ satisfy $(\forall x \in \mathcal{D})$  $F(x) \in \aff (\mathcal{S}(x))$. Then $(\forall x \in \mathcal{D})$ $(\forall z \in \cap_{i \in \I} \Fix T_{i} ) $ $ \norm{z - \CC{\mathcal{S}} x}^{2} +\norm{\CC{\mathcal{S}} x- Fx}^{2} =\norm{z -Fx}^{2}$.
\end{enumerate}
	\item \label{fact:CCSRestrict:firmlyquasinonexpan} {\rm \cite[Proposition~3.8]{BOyW2019Isometry}} 	  Suppose that $(\forall i \in \I)$ $T_{i} :\mathcal{D} \to \mathcal{H}$ is isometric  with $\cap_{i \in \I} \Fix T_{i} \neq \varnothing$, and that	$	\mathcal{S} $ is a subset of $ \Omega ( T_{1}, \ldots, T_{m})$ such that $\{\Id, T_{1}, \ldots, T_{m}  \} \subseteq 	\mathcal{S} $ or   $ \{\Id, T_{1},T_{2}T_{1}, \ldots, T_{m} \cdots T_{2}T_{1} \} \subseteq 	\mathcal{S} $. Then 
	\begin{align*}
	(\forall x \in \mathcal{D}) (\forall z \in \Fix \CC{\mathcal{S}} ) \quad \norm{\CC{\mathcal{S}}(x) -z }^{2} + \norm{\CC{\mathcal{S}}(x) -x }^{2} = \norm{x -z}^{2}.
	\end{align*}
	Consequently, $ \CC{\mathcal{S}} $ is firmly quasinonexpansive.
	\end{enumerate}
\end{fact}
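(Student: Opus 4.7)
My plan is to verify each part of \cref{fact:CCSRestrict} by inspecting the original proofs and confirming that in every place where the global properness hypothesis $\CC{\mathcal{S}} \colon \mathcal{H}\to\mathcal{H}$ is invoked, the argument is in fact pointwise or sequential and only requires that the specific circumcenters being manipulated lie in a set on which the constituent operators are defined. Since each $T_i$ is given on $\mathcal{D}$ and (for the parts that treat iterates such as \cref{fact:CCSRestrict:demiclosed,fact:CCSRestrict:Iddemiclosed,fact:CCSRestrict:firmlyquasinonexpan}) the hypothesis $\CC{\mathcal{S}}x \in \mathcal{D}$ for $x \in \mathcal{D}$ is precisely what is needed to legitimately apply $T_i$ and $\CC{\mathcal{S}}$ repeatedly on sequences in $\mathcal{D}$, the reduction is essentially mechanical. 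The weak sequential closedness of $\mathcal{D}$ plays the same role the closedness of $\mathcal{H}$ played before: weak limits of sequences in $\mathcal{D}$ again lie in $\mathcal{D}$, so demiclosedness statements remain well-posed.

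For \cref{fact:CCSRestrict:fix}, I would take $x \in \mathcal{D}$ with $x = \CC{\mathcal{S}}x$; using $\Id \in \mathcal{S}$ (or exploiting the telescoping structure of the chained composition family), the equidistance condition in \cref{defn:cir:map} forces $x = T_i x$ for every $i$. The converse direction is direct from \cref{defn:Circumcenter}. None of this uses properness beyond $\CC{\mathcal{S}}x \in \mathcal{H}$. For \cref{fact:CCSRestrict:demiclosed,fact:CCSRestrict:Iddemiclosed}, I would take a sequence $(x_k)$ in $\mathcal{D}$ with $x_k \weakly x$ and $x_k - \CC{\mathcal{S}}x_k \to 0$, and reproduce the original computation: the hypotheses give $\CC{\mathcal{S}}x_k - T_i x_k \to 0$, hence $x_k - T_i x_k \to 0$ for each $i$, and then demiclosedness of $\Id-T_i$ at $0$ yields $x \in \cap_i \Fix T_i = \Fix \CC{\mathcal{S}}$ by \cref{fact:CCSRestrict:fix}. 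The only extra observation is that $\mathcal{D}$ being weakly sequentially closed makes $x \in \mathcal{D}$, so $\CC{\mathcal{S}}x$ is defined and the conclusion is meaningful.

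For \cref{fact:CCSRestrict:P}, both sub-parts are pointwise identities at a single $x \in \mathcal{D}$: part \cref{fact:CCSRestrict:P:CCS} expresses $\CC{\mathcal{S}}x$ as the projection of any common fixed point $z$ onto $\aff(\mathcal{S}(x))$, and part \cref{fact:CCSRestrict:P:eq} is the Pythagorean decomposition along this projection. The original proofs use only that each $T_i$ is isometric with the common fixed point $z \in \cap_i \Fix T_i$ and that $z$ is equidistant from $\{x, T_1 x,\ldots,T_m x\}$ (because $\norm{T_i x - z} = \norm{T_i x - T_i z} = \norm{x-z}$); this goes through verbatim on $\mathcal{D}$. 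Finally, \cref{fact:CCSRestrict:firmlyquasinonexpan} follows: for $x \in \mathcal{D}$ and $z \in \Fix \CC{\mathcal{S}} = \cap_i \Fix T_i$, apply \cref{fact:CCSRestrict:P:eq} with $F = \Id$ (valid since $\Id \in \mathcal{S}$ implies $x \in \aff(\mathcal{S}(x))$) to obtain the identity $\norm{\CC{\mathcal{S}}x - z}^2 + \norm{\CC{\mathcal{S}}x - x}^2 = \norm{x-z}^2$, which is exactly firm quasinonexpansiveness via \cref{defn:SQNE}.

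The one place where I expect any real care is in \cref{fact:CCSRestrict:demiclosed}, since the hypothesis is phrased as an \emph{implication} on sequences in $\mathcal{D}$ rather than as a structural property of $\mathcal{S}$. There, I would double-check that the weak limit $x$ identified at the end indeed lies in $\mathcal{D}$ so that $\CC{\mathcal{S}}x$ is legitimately defined and the conclusion $x \in \Fix \CC{\mathcal{S}}$ parses; this is exactly where the weak sequential closedness of $\mathcal{D}$ is essential, and it is the only substantive adjustment to the original proof.
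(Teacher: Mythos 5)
Your proposal is correct and matches the paper's treatment: the paper offers no detailed argument for this Fact, merely remarking that the global properness hypothesis in the cited original proofs is superfluous and that those proofs carry over verbatim, which is exactly the part-by-part verification you carry out. Your added observation that weak sequential closedness of $\mathcal{D}$ is what keeps the demiclosedness statements in \cref{fact:CCSRestrict:demiclosed} and \cref{fact:CCSRestrict:Iddemiclosed} well-posed is the only substantive point, and you handle it correctly.
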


The following  \cref{prop:conver:normconve}\cref{prop:conver:normconve:weakly} generalizes \cite[Theorem~4.7]{BOyW2019Isometry} where the global properness assumption is required.
\begin{proposition} \label{prop:conver:normconve}
	Let $\mathcal{D}$ be a nonempty weakly sequentially closed subset of $\mathcal{H}$.  	Let $(\forall i \in \I)$ $T_{i} : \mathcal{D} \to \mathcal{H}$ be isometric such that $\cap_{i \in \I} \Fix T_{i} \neq \varnothing$. Suppose $\mathcal{S}$ is $\{\Id, T_{1},\ldots, T_{m} \}$ or $ \{\Id, T_{1},T_{2}T_{1},  \ldots, T_{m} \cdots T_{2}T_{1} \} $.  Assume that $(\forall x \in \mathcal{D})$  $\CC{\mathcal{S}}x \in \mathcal{D}$. Let $x \in \mathcal{D}$. Then the following statements hold:
	\begin{enumerate}
		\item \label{prop:conver:normconve:weakly} $(\forall k \in \mathbb{N} \smallsetminus \{0\})$ $\Pro_{  \cap^{m}_{i=1}\Fix T_{i} }( \CC{\mathcal{S}}^{k}x) = \Pro_{  \cap^{m}_{i=1}\Fix T_{i} }x$, and
		$(\CC{\mathcal{S}}^{k}x)_{k \in \mathbb{N}}$ weakly converges to $\Pro_{\cap^{m}_{i=1} \Fix T_{i}}x$.
		\item \label{prop:conver:normconve:strong} Assume that
		$\limsup_{k \to \infty} \norm{\CC{\mathcal{S}}^{k}x} \leq \norm{\Pro_{\cap^{m}_{i=1} \Fix T_{i}}x} $ or that $\mathcal{H} =\mathbb{R}^{n}$. Then $(\CC{\mathcal{S}}^{k}x)_{k \in \mathbb{N}}$ converges to $\Pro_{\cap^{m}_{i=1} \Fix T_{i}}x$.
	\end{enumerate}
\end{proposition}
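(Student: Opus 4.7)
The plan is to derive (i) as a direct application of \cref{theorem:quasidemiweakconver} to $T := \CC{\mathcal{S}}$, modulo a short verification that the projection onto $C := \cap_{i \in \I}\Fix T_{i}$ is preserved along the iteration, and to derive (ii) from (i) using the Kadec--Klee property of Hilbert space in the first subcase and finite-dimensionality in the second.

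First I would verify the hypotheses of \cref{theorem:quasidemiweakconver} for $T = \CC{\mathcal{S}}$. The self-map property $\CC{\mathcal{S}}(\mathcal{D}) \subseteq \mathcal{D}$ is assumed; \cref{fact:CCSRestrict}\cref{fact:CCSRestrict:fix} yields $\Fix \CC{\mathcal{S}} = C \neq \varnothing$; and \cref{fact:CCSRestrict}\cref{fact:CCSRestrict:firmlyquasinonexpan} shows that $\CC{\mathcal{S}}$ is firmly quasinonexpansive, i.e.\ $1$-SQNE. For the demiclosedness of $\Id - \CC{\mathcal{S}}$ at $0$, note that every element of $\mathcal{S}$ in either of the two allowed shapes is the identity or a composition of isometries, hence itself isometric and in particular nonexpansive; after rewriting $\mathcal{S} = \{\Id, \widetilde T_{1}, \ldots, \widetilde T_{m}\}$ with each $\widetilde T_{i}$ nonexpansive, \cref{fact:CCSRestrict}\cref{fact:CCSRestrict:Iddemiclosed} applies and supplies the demiclosedness. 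Thus \cref{theorem:quasidemiweakconver} guarantees $\CC{\mathcal{S}}^{k}x \weakly \bar x$ for some $\bar x \in \Fix \CC{\mathcal{S}} = C$.

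Next I identify the weak limit. By \cref{fact:CCSRestrict}\cref{fact:CCSRestrict:firmlyquasinonexpan} one has, for every $y \in \mathcal{D}$ and every $z \in C$, the Pythagorean identity $\norm{z - \CC{\mathcal{S}}y}^{2} + \norm{\CC{\mathcal{S}}y - y}^{2} = \norm{z - y}^{2}$, which rearranges to $\innp{z - \CC{\mathcal{S}}y, \CC{\mathcal{S}}y - y} = 0$. Taking two such $z_{1}, z_{2} \in C$ and subtracting yields $\CC{\mathcal{S}}y - y \perp C - C$. Writing $p := \Pro_{C}x$ and combining with the variational characterization $\innp{x - p, c - p} \leq 0$ for every $c \in C$, an easy induction on $k$ (using $y = \CC{\mathcal{S}}^{k-1}x$ in the inductive step) gives $\innp{\CC{\mathcal{S}}^{k}x - p, c - p} \leq 0$ for every $k \geq 1$ and every $c \in C$, i.e.\ $\Pro_{C}\CC{\mathcal{S}}^{k}x = p$. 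Passing to the weak limit in this inequality (valid since $c - p$ is fixed) gives $\innp{\bar x - p, c - p} \leq 0$ for all $c \in C$, and because $\bar x \in C$ one deduces $\bar x = \Pro_{C}\bar x = p$. This completes (i).

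For (ii), weak lower semicontinuity of $\norm{\cdot}$ combined with the weak convergence $\CC{\mathcal{S}}^{k}x \weakly \Pro_{C}x$ gives $\norm{\Pro_{C}x} \leq \liminf_{k}\norm{\CC{\mathcal{S}}^{k}x}$; together with the hypothesis $\limsup_{k}\norm{\CC{\mathcal{S}}^{k}x} \leq \norm{\Pro_{C}x}$ this forces $\norm{\CC{\mathcal{S}}^{k}x} \to \norm{\Pro_{C}x}$, whence the Kadec--Klee (Radon--Riesz) property of Hilbert space upgrades weak to norm convergence. If $\mathcal{H} = \mathbb{R}^{n}$, weak and strong convergence coincide and the conclusion is immediate from (i). The main bookkeeping obstacle is the demiclosedness step for the composite choice of $\mathcal{S}$, which is handled by collapsing each $T_{i}\cdots T_{1}$ into a single nonexpansive operator before invoking \cref{fact:CCSRestrict}\cref{fact:CCSRestrict:Iddemiclosed}; everything else is a direct bookkeeping check against \cref{fact:CCSRestrict} and \cref{theorem:quasidemiweakconver}.
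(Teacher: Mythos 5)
Your proposal is correct and follows essentially the same route as the paper: verify that $\CC{\mathcal{S}}$ is a firmly quasinonexpansive self-map of $\mathcal{D}$ with $\Fix\CC{\mathcal{S}}=\cap_{i\in\I}\Fix T_{i}\neq\varnothing$ and $\Id-\CC{\mathcal{S}}$ demiclosed at $0$ (via \cref{fact:CCSRestrict}, collapsing the compositions into single nonexpansive operators exactly as you do), then invoke \cref{theorem:quasidemiweakconver}, and finish (ii) with the Radon--Riesz argument (the paper cites \cite[Lemma~2.51]{BC2017} for precisely the step you carry out by hand). The one place you deviate is the identification of the weak limit: the paper simply uses the ``Moreover'' clause of \cref{theorem:quasidemiweakconver}, for which it must first establish that $\cap_{i\in\I}\Fix T_{i}$ is a closed \emph{affine} subspace (via the cited facts that isometries are affine), whereas you rederive $\Pro_{C}\CC{\mathcal{S}}^{k}x=\Pro_{C}x$ directly from the Pythagorean identity of \cref{fact:CCSRestrict}\cref{fact:CCSRestrict:firmlyquasinonexpan} and pass to the weak limit. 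Your variant is self-contained and only needs $C$ closed and convex rather than affine, at the cost of a small induction; both are valid, and the rest of your bookkeeping matches the paper's.
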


\begin{proof}
	\cref{prop:conver:normconve:weakly}: 
	Because $(\forall i \in \I)$ $T_{i} : \mathcal{H} \to \mathcal{H}$ is isometric such that $\cap_{i \in \I} \Fix T_{i} \neq \varnothing$,  by \cite[Lemma~2.24]{BOyW2019Isometry},  \cite[Proposition~3.4]{BOyW2019LinearConvergence} and \cref{fact:CCSRestrict}\cref{fact:CCSRestrict:fix}, we know that $(\forall k \in \I)$ $T_{k}\cdots T_{1}$ is isometric and $\Fix \CC{\mathcal{S}} = \cap_{i \in \I} \Fix T_{i} $ is a  closed affine subspace of $\mathcal{H}$. Then using \cref{fact:CCSRestrict}\cref{fact:CCSRestrict:Iddemiclosed}$\&$\cref{fact:CCSRestrict:firmlyquasinonexpan}, we obtain respectively that  $\Id -\CC{\mathcal{S}}$ is  demiclosed at $0$, and that $ \CC{\mathcal{S}}$ is firmly quasinonexpansive.  
	Combine these results with \cref{defn:SQNE} and \cref{theorem:quasidemiweakconver} to see that  \cref{prop:conver:normconve:weakly} holds.

	\cref{prop:conver:normconve:strong}: The required result follows from  \cref{prop:conver:normconve:weakly}, \cite[Lemma~2.51]{BC2017} and the well-known fact that in $\mathbb{R}^{n}$ weak convergence is equivalent to strong convergence. 
\end{proof}

 \begin{lemma}\label{lemma:affineTOlinear}
 Let $(\forall i \in \I)$ $G_{i} : \mathcal{H} \to \mathcal{H}$ with $\cap_{i\in \I} \Fix G_{i} \neq \varnothing$ and let $z \in \mathcal{H}$. Define $(\forall i \in \I)$ $(\forall x \in \mathcal{H})$  $F_{i}x := G_{i} (x+z) -z$. Set $\mathcal{S}:=\{ G_{1}, \ldots,  G_{m} \}$ and 
 	$\mathcal{S}_{F} := \{F_{1}, \ldots, F_{m}\}$.  Let $C $ be a nonempty subset of $\mathcal{H}$, and let $k \in \mathbb{N}$.     Then the following statements hold:
 	\begin{enumerate}
 		\item \label{lemma:affineTOlinear:F} $(\forall i \in \I)$  $\Fix F_{i} =\Fix G_{i} -z$, and 	$\cap_{i\in \I} \Fix F_{i} =\cap_{i\in \I} \Fix G_{i}   -z$. 
 		Moreover, if  $(\forall i \in \I)$ $G_{i}$ is affine and   $z \in \cap_{i\in \I} \Fix G_{i} $, then $(\forall i \in \I)$   $F_{i}$ is linear.
 		\item \label{lemma:affineTOlinear:k}  $(\forall x \in \mathcal{H})$  $ \Pro_{  \cap^{m}_{i=1}\Fix G_{i} }x = z + \Pro_{\cap^{m}_{i=1}\Fix F_{i}}(x-z)$ and $\CC{\mathcal{S}}^{k} x =  z+ \CC{\mathcal{S}_{F}}^{k}(x-z)$.
 		\item \label{lemma:affineTOlinear:EQ} $(\forall x \in C  ) $ $\CC{\mathcal{S}}^{k}x =\Pro_{  \cap^{m}_{i=1}\Fix G_{i} }x$ if and only if $(\forall x \in C-z ) $ $\CC{\mathcal{S}_{F}}^{k}x =\Pro_{\cap^{m}_{i=1}\Fix F_{i}}x $.
 		Consequently, 	 $(\forall x \in \cap_{i\in \I} \Fix G_{i}  ) $ $\CC{\mathcal{S}}^{k}x =\Pro_{  \cap^{m}_{i=1}\Fix G_{i} }x$ if and only if $(\forall x \in \cap_{i\in \I} \Fix F_{i}  ) $ $\CC{\mathcal{S}_{F}}^{k} x =\Pro_{\cap^{m}_{i=1}\Fix F_{i}}x $.
 		
 		\item \label{lemma:affineTOlinear:Fix} Then $(\forall x \in C )$ $\CC{\mathcal{S}}^{k}x \in C $ if and only if $(\forall x \in  C-z)$ $\CC{\mathcal{S}_{F}}^{k} x \in C-z$. 		
 		Consequently, $(\forall x \in \cap_{i\in \I} \Fix G_{i} )$ $\CC{\mathcal{S}}^{k} x \in \cap_{i\in \I} \Fix G_{i} $ if and only if $(\forall x \in  \cap_{i\in \I} \Fix F_{i})$ $\CC{\mathcal{S}_{F}}^{k} x \in \cap_{i\in \I} \Fix F_{i}$.
 	\end{enumerate}
 \end{lemma}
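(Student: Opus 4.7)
The backbone of the whole lemma is the observation that $F_i$ is the conjugate of $G_i$ by translation: writing $T_z\colon x\mapsto x+z$, one has $F_i=T_{-z}\circ G_i\circ T_z$. I would state this once at the outset and then let every item fall out as an equivariance statement.

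\textbf{Proof of (i).} For each $i\in\I$, $F_ix=x \Leftrightarrow G_i(x+z)=x+z \Leftrightarrow x+z\in\Fix G_i$, so $\Fix F_i=\Fix G_i-z$; intersecting over $i$ gives $\cap_{i\in\I}\Fix F_i=\cap_{i\in\I}\Fix G_i-z$. If each $G_i$ is affine, write $G_ix=A_ix+b_i$ with $A_i$ linear; the hypothesis $z\in\Fix G_i$ yields $b_i=z-A_iz$, hence $F_ix=A_i(x+z)+b_i-z=A_ix$, which is linear.

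\textbf{Proof of (ii).} The projection identity is exactly \cref{fac:SetChangeProje} applied to $C:=\cap_{i\in\I}\Fix F_i$ (nonempty by assumption and (i)) and $D:=z+C=\cap_{i\in\I}\Fix G_i$. For the circumcenter identity I would induct on $k$. The base $k=0$ is trivial. For the inductive step the key observation is that
\[
\mathcal{S}_F(x-z)=\{F_i(x-z)\}_{i\in\I}=\{G_ix-z\}_{i\in\I}=\mathcal{S}(x)-z,
\]
together with the translation-equivariance of the circumcenter operator itself: for any $K\in\mathcal{P}(\mathcal{H})$ and any $w\in\mathcal{H}$, $\aff(K-w)=\aff(K)-w$ and $\norm{(p-w)-(x-w)}=\norm{p-x}$, so by \cref{defn:Circumcenter} $\CCO(K-w)=\CCO(K)-w$ in the sense that one exists iff the other does and they differ by $w$. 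Applying this with $w=z$ gives $\CC{\mathcal{S}_F}(x-z)=\CC{\mathcal{S}}(x)-z$, i.e.\ $\CC{\mathcal{S}}(x)=z+\CC{\mathcal{S}_F}(x-z)$. Composing with the inductive hypothesis $\CC{\mathcal{S}}^k x=z+\CC{\mathcal{S}_F}^k(x-z)$ and applying $\CC{\mathcal{S}}$ on both sides closes the induction.

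\textbf{Proof of (iii) and (iv).} Both reduce to substitutions using (ii). For (iii), $\CC{\mathcal{S}}^k x=\Pro_{\cap \Fix G_i}x$ iff $z+\CC{\mathcal{S}_F}^k(x-z)=z+\Pro_{\cap\Fix F_i}(x-z)$ iff $\CC{\mathcal{S}_F}^k(y)=\Pro_{\cap\Fix F_i}(y)$ for $y:=x-z$; quantifying $x$ over $C$ is the same as quantifying $y$ over $C-z$. The second statement of (iii) is the special case $C=\cap_{i\in\I}\Fix G_i$, using $C-z=\cap_{i\in\I}\Fix F_i$ from (i). For (iv), $\CC{\mathcal{S}}^k x\in C$ iff $z+\CC{\mathcal{S}_F}^k(x-z)\in C$ iff $\CC{\mathcal{S}_F}^k(x-z)\in C-z$; again $x\in C \Leftrightarrow x-z\in C-z$, and the consequence is the special case $C=\cap_{i\in\I}\Fix G_i$.

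The only non-routine point is the translation-equivariance $\CCO(K-z)=\CCO(K)-z$, which is not stated explicitly in the excerpt; I would verify it in one line directly from \cref{defn:Circumcenter} (the uniqueness in that definition forces the correspondence, and the degenerate case where the circumcenter is $\varnothing$ transports under translation as well). Everything else is bookkeeping with a change of variable.
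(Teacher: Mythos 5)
Your proof is correct and follows the same underlying idea as the paper, namely that $F_i$ is the translation conjugate of $G_i$ and that both the projection and the circumcenter operator are translation-equivariant. The paper simply outsources parts (i) and (ii) to \cite[Lemmas~3.8 and 4.8]{BOyW2019LinearConvergence} and declares (iii)--(iv) clear, whereas you supply the direct verification (including the one genuinely non-trivial point, $\CCO(K-z)=\CCO(K)-z$, which you justify correctly from \cref{defn:Circumcenter}); the arguments match in substance.
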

 \begin{proof}
 	\cref{lemma:affineTOlinear:F}: This follows from  \cite[Lemma~3.8(ii)$\&$(iii)]{BOyW2019LinearConvergence}.
 	
 	\cref{lemma:affineTOlinear:k}: This is from \cite[Lemma~4.8(ii)$\&$(iv)]{BOyW2019LinearConvergence}.
 	
 	\cref{lemma:affineTOlinear:EQ}$\&$\cref{lemma:affineTOlinear:Fix}: The desired results are clear from 	\cref{lemma:affineTOlinear:F} and 	\cref{lemma:affineTOlinear:k} above.
 \end{proof}

According to \cite[Theorem~4.3 and Corollary~5.2]{BOyW2018Proper},   the $\CC{\mathcal{S}}$  in \cref{prop:CCS:CP} below is globally proper. The following result states that this $\CC{\mathcal{S}}$ is also locally proper.
 
 \begin{proposition}  \label{prop:CCS:CP}
 Let $(\forall i \in \I)$ $u_{i} \in \mathcal{H} \setminus\{0\}$ and $\eta_{i} \in \mathbb{R}$. Set  $(\forall i \in \I)$ $H_{i}:= \{ x \in \mathcal{H} ~:~ \innp{x, u_{i}} =\eta_{i} \}$, 
 	and $C:= \{ x \in \mathcal{H} ~:~ (\forall i \in \I)  \quad \norm{x - \Pro_{H_{i}}x  } = \norm{x - \Pro_{H_{1}}x  }  \}$.
 	Assume that $\cap_{i \in \I} H_{i}  \neq \varnothing$. Set $\mathcal{S}:= \{\Id,  \Pro_{H_{1}}, \ldots, \Pro_{H_{m}}  \}$.
 	Then $(\forall x \in C)$ $\CC{\mathcal{S}}(x) \in C$.
 \end{proposition}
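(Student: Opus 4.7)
Fix $x \in C$ and set $p := \CC{\mathcal{S}}(x)$, which exists in $\mathcal{H}$ by the global properness of $\CC{\mathcal{S}}$ cited immediately before the statement. The plan is to prove the sharp identity
\begin{equation*}
\dist(p, H_{i}) = \tfrac{1}{2} \dist(x, H_{i}) \qquad \text{for every } i \in \I,
\end{equation*}
whose right-hand side is independent of $i$ by the hypothesis $x \in C$; this immediately gives $p \in C$.

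First I would dispose of the degenerate case $\dist(x, H_{1}) = 0$: the defining equalities for $C$ then force $x \in H_{i}$ for every $i$, so $\Pro_{H_{i}} x = x$ for all $i$, $\mathcal{S}(x) = \{x\}$, and $p = x \in C$.

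Assume from now on that $\delta := \dist(x, H_{1}) > 0$; then $\innp{x, u_{i}} \neq \eta_{i}$ for every $i \in \I$. Since $\Id \in \mathcal{S}$, the circumcenter equidistance property in \cref{defn:Circumcenter} yields $\norm{p - x} = \norm{p - \Pro_{H_{i}} x}$; by (\cref{lemma:normeq}\cref{lemma:normeq:norm}$\Leftrightarrow$\cref{lemma:normeq:innp}) applied with $(p, x, \Pro_{H_{i}} x)$ playing the roles of $(p, x, y)$, this is equivalent to
\begin{equation*}
\innp{p - x, \Pro_{H_{i}} x - x} = \tfrac{1}{2} \norm{\Pro_{H_{i}} x - x}^{2}.
\end{equation*}
Substituting the formula $\Pro_{H_{i}} x - x = \frac{\eta_{i} - \innp{x, u_{i}}}{\norm{u_{i}}^{2}} u_{i}$ from \cref{fact:Projec:Hyperplane} and cancelling the nonzero scalar $\frac{\eta_{i} - \innp{x, u_{i}}}{\norm{u_{i}}^{2}}$, I obtain
\begin{equation*}
\innp{p, u_{i}} - \eta_{i} = -\tfrac{1}{2}\bigl(\eta_{i} - \innp{x, u_{i}}\bigr),
\end{equation*}
whence the target identity $\dist(p, H_{i}) = \tfrac{1}{2}\dist(x, H_{i})$ follows at once.

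The computation is short once the equivalence from \cref{lemma:normeq} is brought in; the only real subtlety, and the step I would verify most carefully, is the split at $\delta = 0$, which is needed to legitimize the cancellation producing the final formula. Note that the argument actually shows the stronger fact that $\dist(\CC{\mathcal{S}}(x), H_{i})$ equals half the common value of $\dist(x, H_{j})$, which may itself be useful downstream for iterating $\CC{\mathcal{S}}$ on $C$.
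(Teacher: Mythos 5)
Your proof is correct, and it takes a genuinely different (and shorter) route than the paper's. Both arguments hinge on the same halving identity $\dist(\CC{\mathcal{S}}(x),H_{i})=\tfrac{1}{2}\dist(x,H_{i})$, but you reach it by a direct scalar computation: combining the equidistance condition $\norm{p-x}=\norm{p-\Pro_{H_{i}}x}$ with \cref{lemma:normeq}\cref{lemma:normeq:norm}$\Leftrightarrow$\cref{lemma:normeq:innp} and the explicit formula of \cref{fact:Projec:Hyperplane}, then cancelling the nonzero scalar $\frac{\eta_{i}-\innp{x,u_{i}}}{\norm{u_{i}}^{2}}$ to get $\innp{p,u_{i}}-\eta_{i}=-\tfrac{1}{2}(\eta_{i}-\innp{x,u_{i}})$. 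The paper instead first translates by a point $z\in\cap_{i\in\I}H_{i}$ to reduce to the linear case $\eta_{i}=0$ (via \cref{lemma:affineTOlinear}), shows that $\CC{\mathcal{S}}(x)-\tfrac{x+\Pro_{H_{i}}x}{2}$ lies in $H_{i}=(\spn\{u_{i}\})^{\perp}$, and then extracts the halving identity from two Pythagorean decompositions of $\norm{\CC{\mathcal{S}}(x)-\Pro_{H_{i}}x}^{2}$. Your approach avoids the translation step and the orthogonal-complement argument entirely, works directly with general $\eta_{i}$, and your case split at $\dist(x,H_{1})=0$ (which forces $x\in\cap_{i\in\I}H_{i}$ and hence $\mathcal{S}(x)=\{x\}$) correctly legitimizes the cancellation; the assumption $\cap_{i\in\I}H_{i}\neq\varnothing$ is still needed, as in the paper, only to guarantee via the cited global properness that $\CC{\mathcal{S}}(x)$ exists in $\mathcal{H}$.
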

 
 \begin{proof}
 	Take $z \in \cap_{i \in \I} H_{i}$. Set $(\forall i \in \I)$ $L_{i}:= H_{i} -z$, that is, $(\forall i \in \I)$ $L_{i}:= \{ x \in \mathcal{H} ~:~ \innp{x, u_{i}} =0 \}$.
 Now, $(\forall i \in \I)$ $L_{i}$ is a linear subspaces of $\mathcal{H}$, $\Pro_{L_{i}}$ is linear, and by \cref{fac:SetChangeProje}, $	(\forall x \in \mathcal{H})$ $ \Pro_{H_{i}}(x) =\Pro_{z+L_{i}}(x)= z + \Pro_{L_{i}}(x-z)$, which implies that 
 	\begin{align*}
 	C-z  =& \{ y \in \mathcal{H} ~:~ y+z \in C \}\\
 	 =& \{  y \in \mathcal{H} ~:~ (\forall i \in \I)  \quad \norm{y+z - \Pro_{H_{i}}(y+z) } = \norm{y+z - \Pro_{H_{1}} (y+z) } \}\\
 	 =& \{  y \in \mathcal{H} ~:~ (\forall i \in \I)  \quad \norm{y - \Pro_{L_{i}}(y ) } = \norm{y  - \Pro_{L_{1}} (y ) } \}.
 	\end{align*}
 	Set	$\mathcal{S}_{L}:= \{\Id,  \Pro_{L_{1}}, \ldots, \Pro_{L_{m}}  \}$. 	Moreover,  by \cref{lemma:affineTOlinear}\cref{lemma:affineTOlinear:Fix}, we know that $(\forall x \in C )$ $\CC{\mathcal{S}}x \in C $ if and only if $(\forall x \in  C-z)$ $\CC{\mathcal{S}_{L}}x \in C-z$. 
 	Therefore, without loss of generality, we assume that $(\forall i \in \I)$ $\eta_{i}=0$, that is, $(\forall i \in \I)$ $H_{i} $ is  a linear subspace  of $\mathcal{H}$ and $\Pro_{H_{i}}$ is linear.
 	
 	Let $x \in C$.	If there exists $j \in \I$ such that $x \in H_{j}$, then by definition of $C$, $x \in C$ implies that $x \in \cap_{i \in \I} H_{i}$ and so $ \CC{\mathcal{S}}(x)=x \in C$. Suppose that $(\forall i \in \I)$ $x \notin H_{i}$.
 	
 	According to  \cite[Corollary~5.2]{BOyW2018Proper}, $\CC{\mathcal{S}}(x) \in \mathcal{H}$ satisfies that $ \CC{\mathcal{S}}(x) \in \aff \{ x, \Pro_{H_{1}}x, \ldots, \Pro_{H_{m}}x \} $ and that $(\forall i \in \I)$ $\norm{ \CC{\mathcal{S}}(x)-x }= \norm{ \CC{\mathcal{S}}(x)-\Pro_{H_{i}}x }.$
 	Let $i \in \I$. Using \cref{lemma:normeq}\cref{lemma:normeq:norm}$\Leftrightarrow$\cref{lemma:normeq2}$\Leftrightarrow$\cref{lemma:normeq:0}, we have that 
 	\begin{subequations}\label{eq:prop:CCS:CP}
 		\begin{align}
 		&\norm{ \CC{\mathcal{S}}(x)-x }= \norm{ \CC{\mathcal{S}}(x)-\Pro_{H_{i}}x }\\
 		\Leftrightarrow & \Innp{ \CC{\mathcal{S}}(x) - \frac{x + \Pro_{H_{i}}x }{2} , \frac{x + \Pro_{H_{i}}x }{2}-\Pro_{H_{i}}x }=0 \label{eq:prop:CCS:CP:innp}\\
 		\Leftrightarrow & \Innp{ \CC{\mathcal{S}}(x) - \frac{x + \Pro_{H_{i}}x }{2} , x-\Pro_{H_{i}}x }=0. \label{eq:prop:CCS:CP:x}
 		\end{align}
 	\end{subequations}
 	Note that  $\eta_{i}=0$ implies    $H_{i}=\ker u_{i}$ and $H_{i}^{\perp}=\spn \{ u_{i} \}$.  Because  $x \notin H_{i}$,  by \cite[Corollary~3.24(v)]{BC2017}, we know that  $x-  \Pro_{H_{i}}x=\Pro_{H_{i}^{\perp}}x \in \spn\{ u_{i} \} \smallsetminus \{0\}$. Hence, due to \cref{eq:prop:CCS:CP:x}, $\CC{\mathcal{S}}(x) - \frac{x + \Pro_{H_{i}}x }{2}  \in  \left( \spn\{ x-\Pro_{H_{i}}x  \} \right)^{\perp} = \left( \spn\{ u_{i} \} \right)^{\perp} =H_{i}$, which, by \cite[Theorem~5.13(1)$\&$(2)]{D2012}, implies that
 	\begin{align}\label{eq:prop:CCS:CP:norm}
 	\CC{\mathcal{S}}(x) - \frac{x + \Pro_{H_{i}}x }{2}    =   \Pro_{H_{i}} \left(\CC{\mathcal{S}}(x) - \frac{x + \Pro_{H_{i}}x }{2} \right) 
 	=   \Pro_{H_{i}}  \CC{\mathcal{S}}(x) -\Pro_{H_{i}} x.
 	\end{align}
 	Using \cite[Proposition~2.10(iii)]{BOyW2018Proper} and \cref{eq:prop:CCS:CP:innp}, we obtain respectively  that
 	\begin{align*}
 	&\norm{ \CC{\mathcal{S}}(x) -\Pro_{H_{i}} x }^{2}=\norm{\CC{\mathcal{S}}(x)-\Pro_{H_{i}} \CC{\mathcal{S}}(x) }^{2} +\norm{ \Pro_{H_{i}}  \CC{\mathcal{S}}(x) -\Pro_{H_{i}} x  }^{2};\\
 	& \norm{ \CC{\mathcal{S}}(x) -\Pro_{H_{i}} x }^{2}=\Norm{\CC{\mathcal{S}}(x) - \frac{x + \Pro_{H_{i}}x }{2}}^{2} +\Norm{  \frac{x + \Pro_{H_{i}}x }{2}-\Pro_{H_{i}} x  }^{2}.
 	\end{align*}
 	Combine these two identities with \cref{eq:prop:CCS:CP:norm} to see that 
 	\begin{align*}
 	\norm{\CC{\mathcal{S}}(x)-\Pro_{H_{i}} \CC{\mathcal{S}}(x) }= \Norm{  \frac{x + \Pro_{H_{i}}x }{2}-\Pro_{H_{i}} x  }=\frac{1}{2} \norm{x - \Pro_{H_{i}}x  }.
 	\end{align*}
 	Therefore, $(\forall x \in C)$  $(\forall i \in \I)$ $\norm{\CC{\mathcal{S}}(x)-\Pro_{H_{i}} \CC{\mathcal{S}}(x) } =\frac{1}{2} \norm{x - \Pro_{H_{i}}x  } =\frac{1}{2} \norm{x - \Pro_{H_{1}}x  }=\norm{\CC{\mathcal{S}}(x)-\Pro_{H_{1}} \CC{\mathcal{S}}(x) }$, which verifies that $ \CC{\mathcal{S}}(x) \in C$. 
 \end{proof}

\section{One-step convergence of circumcentered methods} \label{sec:OneStepConvergence}
Assume that  the circumcenter mapping $\CC{\mathcal{S}}$   is  proper  over a nonempty subset  $\mathcal{D}$ of $\mathcal{H}$.
Recall that  the  related circumcentered method   generates the sequence $(\CC{\mathcal{S}}^{k} x)_{k \in \mathbb{N}}$ of iterations, and that when $\mathcal{D} \neq \mathcal{H}$, we   say the related circumcentered method is \emph{locally proper}.
In this section, we show one-step convergence of circumcentered methods
 in \cref{theorem:CCSHRT12}\cref{theorem:CCSHRT12:P}, \cref{cor:CCSHU} and \cref{prop:Tn:CCSP} below.

\subsection*{Circumcentered methods with initial points from a certain set}

 \cref{theorem:CCSHRT12}\cref{theorem:CCSHRT12:P}  is motivated by  and  a generalization of \cite[Lemma~2]{BBCS2020}.
In fact,   \cref{cor:CCSHU} below reduces to \cite[Lemma~2]{BBCS2020} when  $\mathcal{H}=\mathbb{R}^{n}$.

\begin{theorem} \label{theorem:CCSHRT12}
  Let $T_{1} :\mathcal{H} \to \mathcal{H}$ and $T_{2} :\mathcal{H} \to \mathcal{H}$  be isometries with $\Fix T_{1} \cap \Fix T_{2} \neq \varnothing$.  Set $\mathcal{S} := \{\Id, T_{1}, T_{2}T_{1}  \}$.
	Then the following statements hold:
	\begin{enumerate}
		\item \label{theorem:CCSHRT12:welldefined} $(\forall x \in \mathcal{H})$ $(\forall z \in \Fix T_{1} \cap \Fix T_{2} ) $ $ \CC{\mathcal{S}} x =\Pro_{\aff (\mathcal{S} (x))} (z) \in \mathcal{H}$ and $(\forall k \in \mathbb{N})$ $ \Pro_{ \Fix T_{1} \cap \Fix T_{2} }\CC{\mathcal{S}}^{k}x=\Pro_{ \Fix T_{1} \cap \Fix T_{2} }x$. Moreover, for every $x \in \Fix T_{2}$,
		\begin{align*}
		\CC{\mathcal{S}}(x) &=
		\begin{cases}
		x  \quad \text{if } x =T_{1}x;\\
		x+\alpha (T_{1}x-x) +\alpha(T_{2}T_{1}x  -x), ~\text{where~} \alpha:=\left( 4- \frac{\norm{T_{2}T_{1}x -T_{1}x}^{2} }{\norm{T_{1}x -x}^{2}}  \right)^{-1} \in \left[\frac{1}{4}, \infty \right[ \quad \text{otherwise}.
		\end{cases}
		\end{align*}
		\item  \label{theorem:CCSHRT12:CCS} Assume that $T_{1}(\Fix T_{2}) \subseteq \Fix T^{2}_{2}$  $($e.g., when $\Range T_{1} \subseteq \Fix T^{2}_{2}$ or $T^{2}_{2} =\Id$$)$. Then 
		\begin{align*}
		(\forall x  \in \Fix T_{2}) \quad \CC{\mathcal{S}}(x) \in \Fix T_{2}.
		\end{align*}

		\item \label{theorem:CCSHRT12:P} Assume that $T_{1}(\Fix T_{2}) \subseteq \Fix T^{2}_{2}$,  that
		$\Fix T_{2} \subseteq \Fix T^{2}_{1}$,
		 and that $\dim (\pa(\Fix T_{1})^{\perp} )=1$. Then 
		\begin{align*}
		(\forall x  \in \Fix T_{2}) \quad \CC{\mathcal{S}}(x) =\Pro_{ \Fix T_{1} \cap \Fix T_{2} }x.
		\end{align*}
	\end{enumerate}
\end{theorem}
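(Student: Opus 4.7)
The plan is to exploit (ii) to force $\CC{\mathcal{S}}(x) \in \Fix T_{2}$, then to use the codimension-one hypothesis on $\Fix T_{1}$ to force also $\CC{\mathcal{S}}(x) \in \Fix T_{1}$, and finally to collapse the projection identity from (i) to the claimed formula. First I dispose of the trivial case $x \in \Fix T_{1}$: then $T_{1}x = x$ and $T_{2}T_{1}x = T_{2}x = x$, so $\mathcal{S}(x) = \{x\}$, $\CC{\mathcal{S}}(x) = x$, and $x \in \Fix T_{1} \cap \Fix T_{2}$ is its own projection. Henceforth I assume $x \in \Fix T_{2} \smallsetminus \Fix T_{1}$, so that $T_{1}x \neq x$.

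The heart of the argument is to identify $\Fix T_{1}$ with the perpendicular bisector
\begin{equation*}
H := \{ y \in \mathcal{H} ~:~ \norm{y - x} = \norm{y - T_{1}x} \}
\end{equation*}
of the pair $\{x, T_{1}x\}$. Since $T_{1}x \neq x$, $H$ is an affine hyperplane of $\mathcal{H}$. For every $z \in \Fix T_{1}$ the isometry of $T_{1}$ yields $\norm{x - z} = \norm{T_{1}x - T_{1}z} = \norm{T_{1}x - z}$, so $z \in H$; thus $\Fix T_{1} \subseteq H$. The hypothesis $\dim((\pa \Fix T_{1})^{\perp}) = 1$, together with the fact that $\Fix T_{1}$ is a nonempty closed affine subspace (as the fixed-point set of the isometry $T_{1}$, nonempty because $\Fix T_{1} \supseteq \Fix T_{1} \cap \Fix T_{2} \neq \varnothing$), makes $\Fix T_{1}$ itself an affine hyperplane; two affine hyperplanes with one included in the other must coincide, so $\Fix T_{1} = H$. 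Now by the defining equidistance of the circumcenter, $\norm{\CC{\mathcal{S}}(x) - x} = \norm{\CC{\mathcal{S}}(x) - T_{1}x}$, so $\CC{\mathcal{S}}(x) \in H = \Fix T_{1}$; combined with $\CC{\mathcal{S}}(x) \in \Fix T_{2}$ furnished by (ii), we conclude $\CC{\mathcal{S}}(x) \in \Fix T_{1} \cap \Fix T_{2}$.

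To finish, I apply the identity $\Pro_{\Fix T_{1} \cap \Fix T_{2}} \CC{\mathcal{S}}^{k} x = \Pro_{\Fix T_{1} \cap \Fix T_{2}} x$ from (i) with $k=1$: since $\CC{\mathcal{S}}(x) \in \Fix T_{1} \cap \Fix T_{2}$, the left-hand side simplifies to $\CC{\mathcal{S}}(x)$, delivering $\CC{\mathcal{S}}(x) = \Pro_{\Fix T_{1} \cap \Fix T_{2}} x$. The main obstacle is the hyperplane identification $\Fix T_{1} = H$; once that is secured, the remainder is routine bookkeeping from (i), (ii), and the definition of the circumcenter. I remark that the auxiliary hypothesis $\Fix T_{2} \subseteq \Fix T_{1}^{2}$ is not visibly used in this chain of reasoning, though it may enter naturally if one prefers a direct verification through the explicit formula for $\CC{\mathcal{S}}(x)$ recorded in (i).
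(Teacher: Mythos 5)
Your proof is correct, and it takes a genuinely different route to the key step from the one in the paper. The paper first reduces to the case where $T_{1},T_{2}$ are \emph{linear} isometries (via a translation lemma), uses $\Fix T_{2}\subseteq\Fix T_{1}^{2}$ together with linearity to show that the midpoint $\tfrac{x+T_{1}x}{2}$ lies in $\Fix T_{1}$, and then invokes the identity $\overline{\Range}(\Id-T_{1})=(\Fix T_{1})^{\perp}$ for linear nonexpansive operators to get $(\Fix T_{1})^{\perp}=\spn\{T_{1}x-x\}$, from which the equidistance $\norm{\CC{\mathcal{S}}(x)-x}=\norm{\CC{\mathcal{S}}(x)-T_{1}x}$ places $\CC{\mathcal{S}}(x)$ in $\Fix T_{1}$. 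You instead identify $\Fix T_{1}$ outright with the perpendicular bisector $H$ of $\{x,T_{1}x\}$: the inclusion $\Fix T_{1}\subseteq H$ needs only the isometry of $T_{1}$, and the codimension-one hypothesis makes both sets closed affine hyperplanes (kernel level sets of nonzero continuous functionals), whence inclusion forces equality; the equidistance then does the rest. This avoids the linearization step and the range identity entirely, and it correctly exposes that the hypothesis $\Fix T_{2}\subseteq\Fix T_{1}^{2}$ is not needed for part (iii) beyond what (ii) already requires — indeed it is automatic here, since an isometry with nonempty fixed-point set whose parallel orthogonal complement is one-dimensional is necessarily the reflection across $\Fix T_{1}$, so $T_{1}^{2}=\Id$. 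The one thing the paper's computation buys that yours does not is the explicit orthogonal decomposition $\CC{\mathcal{S}}(x)-\tfrac{x+T_{1}x}{2}\in\Fix T_{1}$ relative to the midpoint, but that is not needed for the statement. All the remaining steps (the trivial case $x\in\Fix T_{1}$, the appeal to (ii) for membership in $\Fix T_{2}$, and collapsing $\Pro_{\Fix T_{1}\cap\Fix T_{2}}\CC{\mathcal{S}}x=\Pro_{\Fix T_{1}\cap\Fix T_{2}}x$ once $\CC{\mathcal{S}}x\in\Fix T_{1}\cap\Fix T_{2}$) coincide with the paper's.
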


\begin{proof}
	\cref{theorem:CCSHRT12:welldefined}: Note that compositions of isometries are   isometries and that $ \Fix T_{1} \cap \Fix T_{2} =\Fix T_{1} \cap \Fix T_{2}T_{1} $.  Then   \cref{fact:CCSRestrict}\cref{fact:CCSRestrict:P:CCS} and \cite[Theorem~5.7]{BOyW2019Isometry} imply that $(\forall x \in \mathcal{H})$ $(\forall z \in \Fix T_{1} \cap \Fix T_{2} ) $ $ \CC{\mathcal{S}} x =\Pro_{\aff (\mathcal{S} (x))} (z) \in \mathcal{H}$  and $(\forall k \in \mathbb{N})$ $ \Pro_{ \Fix T_{1} \cap \Fix T_{2} }\CC{\mathcal{S}}^{k}x=\Pro_{ \Fix T_{1} \cap \Fix T_{2} }x$.
	Combine this with \cref{lemma:pxyz} to see that 
	\begin{align*}
	\{ x \in \mathcal{H} ~:~  \norm{x -T_{1}x} = \norm{x -T_{2}T_{1}x}   \} \cap \{ x \in \mathcal{H} ~:~ x\neq T_{1}x \text{ and } x=\frac{T_{1}x +T_{2}T_{1}x}{2} \} =\varnothing.
	\end{align*}
	
	In addition, inasmuch as $T_{2}$ is isometric, 
	\begin{align*}
	(\forall x \in \mathcal{H}) \quad x \in \Fix T_{2} \Leftrightarrow x =T_{2}x \Rightarrow \norm{x -T_{2}T_{1}x} =\norm{T_{2}x -T_{2}T_{1}x} = \norm{x -T_{1}x},
	\end{align*}
	which yields  $	\Fix T_{2} \subseteq \{ x \in \mathcal{H} ~:~  \norm{x -T_{1}x} = \norm{x -T_{2}T_{1}x}   \}$.
Therefore, for every $x \in 	\Fix T_{2}$,	the required  formula  follows from   \cref{lemma:pxyz} with $x, y$ and  $z$ substituted by $x$, $T_{1}x$ and $T_{2}T_{1}x$,  respectively.

	\cref{theorem:CCSHRT12:CCS}: According to \cite[Proposition~3.4]{BOyW2019LinearConvergence}, every isometry must be affine. Then using \cref{lemma:affineTOlinear}\cref{lemma:affineTOlinear:F}$\&$\cref{lemma:affineTOlinear:Fix},  without loss of generality, we assume that $T_{1}$ and $T_{2}$ are linear isometries.
Let $x \in 	\Fix T_{2}$.	In view of \cref{theorem:CCSHRT12:welldefined} above, we have exactly the following two cases:
	
	\textbf{Case~1}: $x =T_{1}x$. Then $\CC{\mathcal{S}} (x) =x \in \Fix T_{2}$. 
	
	\textbf{Case~2}: $x \neq T_{1}x$. Then $\CC{\mathcal{S}} (x) =x+\alpha (T_{1}x-x) +\alpha (T_{2}T_{1}x  -x)$, where $\alpha:=\left( 4- \frac{\norm{T_{2}T_{1}x -T_{1}x}^{2} }{\norm{T_{1}x -x}^{2}}  \right)^{-1} \in \left[\frac{1}{4}, \infty \right[\,$. 
	Note that  $x\in \Fix T_{2}$ and $T_{1}(\Fix T_{2}) \subseteq \Fix T^{2}_{2}$ entail that 
	$T_{2}T_{2}T_{1}x=T_{1}x$. Combine this with the linearity of $T_{2}$ to observe that 
	\begin{align*}
	T_{2} \left( \CC{\mathcal{S}} (x)   \right) =T_{2} \left( x+\alpha (T_{1}x-x) +\alpha (T_{2}T_{1}x  -x) \right) = x+\alpha (T_{2}T_{1}x  -x)  +\alpha (T_{1}x-x)  =\CC{\mathcal{S}} (x),
	\end{align*}
	which leads that $ \CC{\mathcal{S}} (x) \in \Fix T_{2}$.

	\cref{theorem:CCSHRT12:P}: 
	 Recall that $ \Fix T_{1} \cap \Fix T_{2} =\Fix T_{1} \cap \Fix T_{2} T_{1} $. Then similarly with the proof of	\cref{theorem:CCSHRT12:CCS} above, using \cref{lemma:affineTOlinear}\cref{lemma:affineTOlinear:F}$\&$\cref{lemma:affineTOlinear:EQ}, we assume that $T_{1}$ and $T_{2}$ are linear isometries. Now, $\Fix T_{1} \cap \Fix T_{2}$ is a   closed linear subspace of $\mathcal{H}$.
		Let $ x \in \Fix T_{2}$.  
	
	 Note that if  we can prove that $\CC{\mathcal{S}}x \in \Fix T_{1}$, then, by \cref{theorem:CCSHRT12:CCS}, $\CC{\mathcal{S}}x \in \Fix T_{1} \cap \Fix T_{2}$, and so $ \CC{\mathcal{S}}x=  \Pro_{ \Fix T_{1} \cap \Fix T_{2} }\CC{\mathcal{S}}x \stackrel{\text{\cref{theorem:CCSHRT12:welldefined}}}{=} \Pro_{ \Fix T_{1} \cap \Fix T_{2} }x$.
	Therefore, it remains to show that $\CC{\mathcal{S}}x \in \Fix T_{1}$.
	
	Because  $T_{1}$ is linear, $x \in \Fix T_{2}$, and $\Fix T_{2} \subseteq \Fix T^{2}_{1}$, we have that $T_{1} (\tfrac{T_{1}x +x  }{2}) = \tfrac{x +T_{1}x    }{2}$, that is, $\tfrac{T_{1}x +x  }{2} \in \Fix T_{1}$. 
	If $\CC{\mathcal{S}}(x)-\tfrac{T_{1}x +x  }{2} =0$, then $ \CC{\mathcal{S}}(x)=\tfrac{T_{1}x +x  }{2}  \in \Fix T_{1}$. If $T_{1}x =x$, then by \cref{theorem:CCSHRT12:welldefined}, $  \CC{\mathcal{S}}(x) =x  \in \Fix T_{1} $. In the rest of the proof, we assume that $T_{1}x -x \neq 0$ and that $\CC{\mathcal{S}}(x)-\tfrac{T_{1}x +x  }{2} \neq 0$.
	Because $T_{1}$ is linear and isometric, by \cite[Lemma~2.29]{BOyW2019LinearConvergence}, $	T_{1}x -x \in \overline{\Range} (T_{1} -\Id)=\overline{\Range} (\Id -T_{1} ) =( \Fix T_{1})^{\perp}$.
	Combine this with the assumption  $\dim (\pa(\Fix T_{1})^{\perp} )=1$ and $T_{1}x -x \neq 0 $ to obtain that 
	\begin{align} \label{eq:prop:CCSHR:P:spn}
	( \Fix T_{1})^{\perp} =\spn \{ T_{1}x -x \}.
	\end{align}
	By \cref{defn:cir:map}, we know that $\norm{\CC{\mathcal{S}}(x) -x } = \norm{\CC{\mathcal{S}}(x) - T_{1}x } $. Because, by \cref{lemma:normeq}\cref{lemma:normeq:norm}$\Leftrightarrow$\cref{lemma:normeq:0}, $\norm{\CC{\mathcal{S}}(x) -x } = \norm{\CC{\mathcal{S}}(x) - T_{1}x } 
	\Leftrightarrow   \Innp{ \CC{\mathcal{S}}(x)-\tfrac{T_{1}x +x  }{2}, T_{1}x -x } =0$, 
we have that $	\CC{\mathcal{S}}(x)-\tfrac{T_{1}x +x  }{2}  \in  (\spn \{T_{1}x -x\})^{\perp}\stackrel{\cref{eq:prop:CCSHR:P:spn}}{=} ( \Fix T_{1})^{\perp\perp} =\Fix T_{1}$.
	Recall that  $\tfrac{T_{1}x +x  }{2} \in \Fix T_{1}$ and that $\Fix T_{1} $ is a linear subspace. We obtain that $	\CC{\mathcal{S}}(x) \in \tfrac{T_{1}x +x  }{2} + \Fix T_{1} =\Fix T_{1}.$
	Therefore, the proof is complete.
\end{proof}

\begin{corollary} \label{cor:CCSHU}
	Let $u \in \mathcal{H} \smallsetminus \{0\}$ and $\eta \in \mathbb{R}$. Let $H:=\{ x \in \mathcal{H} ~:~  \innp{x, u} =\eta  \}$, and let $U$ be an affine subspace of $\mathcal{H}$ such that $H \cap U \neq \varnothing$. Assume that 	$\mathcal{S} := \{\Id, \R_{H}, \R_{U}\R_{H}  \}$. Then $(\forall x  \in U)$  $\CC{\mathcal{S}}(x) =\Pro_{ H \cap U }x$.
\end{corollary}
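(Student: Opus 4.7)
The plan is to apply \cref{theorem:CCSHRT12}\cref{theorem:CCSHRT12:P} directly with $T_{1} := \R_{H}$ and $T_{2} := \R_{U}$, so the work reduces to verifying the four hypotheses of that theorem.

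First, reflectors associated with affine subspaces are isometries, and moreover $\R_{H}^{2} = \Id = \R_{U}^{2}$ since reflection about any affine subspace is an involution. Also, $\Fix \R_{H} = H$ and $\Fix \R_{U} = U$, so $\Fix T_{1} \cap \Fix T_{2} = H \cap U \neq \varnothing$ by hypothesis. Hence \cref{theorem:CCSHRT12} applies, and the initial point $x \in U = \Fix T_{2}$ is exactly the type of initial point required by \cref{theorem:CCSHRT12:P}.

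Next I would check the three structural hypotheses of \cref{theorem:CCSHRT12:P}. The condition $T_{1}(\Fix T_{2}) \subseteq \Fix T_{2}^{2}$ is immediate because $\Fix T_{2}^{2} = \Fix \Id = \mathcal{H}$; likewise $\Fix T_{2} \subseteq \Fix T_{1}^{2} = \mathcal{H}$ is trivial. For the dimension condition, note that $H$ is a hyperplane with $\pa H = \ker u$, so $(\pa H)^{\perp} = \spn\{u\}$, which is one-dimensional since $u \neq 0$. Thus $\dim (\pa (\Fix T_{1})^{\perp}) = 1$, as required.

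With all hypotheses verified, \cref{theorem:CCSHRT12}\cref{theorem:CCSHRT12:P} yields that for every $x \in U = \Fix T_{2}$,
\begin{equation*}
\CC{\mathcal{S}}(x) = \Pro_{\Fix T_{1} \cap \Fix T_{2}} x = \Pro_{H \cap U} x,
\end{equation*}
which is the desired conclusion. There is no real obstacle here; the only mild subtlety is recognizing that the involutivity of affine reflectors makes the ``$\subseteq \Fix T_{i}^{2}$'' conditions vacuous, so that the theorem specializes cleanly to this hyperplane/affine-subspace setting.
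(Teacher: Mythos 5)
Your proposal is correct and follows essentially the same route as the paper: both reduce the corollary to \cref{theorem:CCSHRT12}\cref{theorem:CCSHRT12:P} with $T_{1}=\R_{H}$, $T_{2}=\R_{U}$, using that reflectors of affine subspaces are involutive isometries so the two inclusion hypotheses become vacuous, and that $(\pa H)^{\perp}=\spn\{u\}$ gives the one-dimensionality condition. No gaps.
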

\begin{proof}
Because $H$ and $U$ are affine subspaces, by \cite[Lemma~2.23(i)]{BOyW2019Isometry}, $\R_{H}$ and $\R_{U}$ are  isometries. Moreover,   the idempotent property of projectors and the definition of reflectors imply that $\R_{H}^{2}=\Id$ and $\R_{U}^{2}=\Id$. 
Hence, the desired result  follows from 	\cref{theorem:CCSHRT12}\cref{theorem:CCSHRT12:P}.
\end{proof}
The following  \cref{lemma:HW} is necessary in the proof of \cref{prop:CUC} below. In fact,
the result in the  following \cref{lemma:HW}  was directly used in the proofs of  \cite[Lemma~5]{BBCS2020} and   \cite[Proposition~5]{BBCS2020CRMbetter}. For completeness, we provide a clear analytical proof below.
\begin{lemma}\label{lemma:HW}
	Let $u \in \mathcal{H} \smallsetminus \{0\}$ and $\eta \in \mathbb{R}$. Let  $U$ be a closed affine subspace of $\mathcal{H}$. Set $H:=\{ x \in \mathcal{H} ~:~  \innp{x, u} =\eta  \}$ and $W:=\{ x \in \mathcal{H} ~:~  \innp{x, u} \leq \eta  \}$. Assume that $W \cap U \neq \varnothing$ and   $U \smallsetminus W \neq \varnothing$. Then $H \cap U \neq \varnothing$ and  $	(\forall x \in U \smallsetminus W)$ $\Pro_{ H \cap U }x = \Pro_{ W \cap U }x$.
\end{lemma}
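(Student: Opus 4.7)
\medskip

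\noindent\textbf{Proof proposal.} The plan is to split the claim into two steps: first establish $H\cap U\neq\varnothing$ by a connectedness argument along a segment in $U$, and then reduce the projection identity to the inclusion $\Pro_{W\cap U}x\in H$ via \cref{fact:AsubseteqB:Projection}, which I prove by a short contradiction argument using the convex projection.

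For the nonemptiness of $H\cap U$, pick $a\in W\cap U$ and $b\in U\smallsetminus W$, so $\innp{a,u}\le \eta<\innp{b,u}$. Since $U$ is an affine subspace, the segment $[a,b]$ lies in $U$, and the map $t\mapsto \innp{(1-t)a+tb,u}$ is continuous on $[0,1]$ and changes from a value $\le \eta$ to one $>\eta$, so by the intermediate value theorem there exists $t^{\ast}\in(0,1]$ with $\innp{(1-t^{\ast})a+t^{\ast}b,u}=\eta$, giving a point in $H\cap U$. In particular $W\cap U$ and $H\cap U$ are nonempty closed convex subsets of $\mathcal{H}$ with $H\cap U\subseteq W\cap U$, so \cref{fact:AsubseteqB:Projection} applies: for every $x\in \mathcal{H}$, one has $\Pro_{W\cap U}x=\Pro_{H\cap U}x$ as soon as $\Pro_{W\cap U}x\in H\cap U$. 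Since $\Pro_{W\cap U}x\in W\cap U$ automatically, it suffices to prove $\Pro_{W\cap U}x\in H$, i.e., $\innp{\Pro_{W\cap U}x,u}=\eta$.

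Fix $x\in U\smallsetminus W$ and set $p:=\Pro_{W\cap U}x$. Since $p\in W$ we have $\innp{p,u}\le\eta$; suppose toward contradiction that $\innp{p,u}<\eta$. Because $x,p\in U$ and $U$ is affine, the segment $p+t(x-p)$, $t\in[0,1]$, lies in $U$. Continuity of $t\mapsto\innp{p+t(x-p),u}$ together with $\innp{p,u}<\eta$ gives some $t_{0}\in(0,1]$ with $p+t_{0}(x-p)\in W\cap U$. But then
\[
\norm{x-(p+t_{0}(x-p))}=(1-t_{0})\norm{x-p}<\norm{x-p},
\]
contradicting the definition of $p$ as $\Pro_{W\cap U}x$. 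Hence $\innp{p,u}=\eta$, so $p\in H\cap U$ and the desired equality $\Pro_{H\cap U}x=\Pro_{W\cap U}x$ follows from \cref{fact:AsubseteqB:Projection}.

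The only mild subtlety is verifying that the segment toward $x$ remains inside $W$ for some positive parameter, which is where the strict inequality $x\notin W$ (hence $\innp{x,u}>\eta$) interacts with the assumption $\innp{p,u}<\eta$; note that the argument does not need $\innp{x,u}>\eta$ at all, only that $p\neq x$, which is automatic because $p\in W$ and $x\notin W$. No appeal to the explicit projection formulas from \cref{fact:Projec:Hyperplane} or \cref{fact:FactWFactH} is required, and the proof works in the full Hilbert space setting of the paper.
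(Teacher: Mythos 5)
Your proof is correct, and while the first half coincides with the paper's argument, the second half takes a genuinely different route. For the nonemptiness of $H\cap U$ you use exactly the paper's device: the intermediate value theorem applied to $t\mapsto\innp{(1-t)a+tb,u}-\eta$ along a segment in $U$ joining a point of $W\cap U$ to a point of $U\smallsetminus W$. For the projection identity the paper projects onto the \emph{smaller} set: it applies the Pythagorean identity for projections onto closed affine subspaces to get $\norm{x-\Pro_{H\cap U}x}\le(1-\bar{t})\norm{x-y}\le\norm{x-y}$ for every $y\in W\cap U$, which directly certifies $\Pro_{H\cap U}x$ as the nearest point of $W\cap U$ to $x$. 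You instead project onto the \emph{larger} set: you show by a variational contradiction (sliding from $p:=\Pro_{W\cap U}x$ toward $x$ along a segment that stays in $W\cap U$ strictly decreases the distance whenever $\innp{p,u}<\eta$) that $\Pro_{W\cap U}x$ must land in $H\cap U$, and then invoke \cref{fact:AsubseteqB:Projection}. Both arguments are complete; the paper's leans on the affine structure of $H\cap U$ so that Pythagoras applies, whereas yours needs only the convexity of $W\cap U$ together with the strict slack $\innp{p,u}<\eta$, and it recycles \cref{fact:AsubseteqB:Projection}, which the paper already uses elsewhere. One cosmetic point: your $t_{0}$ should be taken in $\left]0,1\right[$ rather than $\left]0,1\right]$, since $t_{0}=1$ gives the point $x\notin W$; this is harmless because any sufficiently small positive $t_{0}$ suffices for the contradiction.
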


\begin{proof}
	Let $x \in U \smallsetminus W$, and let $y \in W \cap U$. Define   $f: \mathbb{R} \to \mathbb{R}$ by $(\forall t \in \mathbb{R})$ $f(t) :=\innp{tx+(1-t)y,u}-\eta$. Because  $y \in W$  and $x \notin W$, we have   that $f(0)=\innp{y, u} -\eta \leq 0$ and $f(1)=\innp{x, u} -\eta >0$. So, there exists $\bar{t} \in \left[0,1\right[\,$ such that $f(\bar{t}) =\innp{\bar{t}x+(1-\bar{t})y,u}-\eta=0$, which implies that, $\bar{t}x+(1-\bar{t})y \in H$.  Because $x \in U$, $y \in U$, and $U$ is affine, we know that $ \bar{t}x+(1-\bar{t})y \in H \cap U \neq \varnothing$. Combine this with the result that $H \cap U$ is a  closed affine subspace, and \cite[Proposition~2.10(iii)]{BOyW2018Proper} to see that 
	\begin{align*}
\norm{ x - \Pro_{ H \cap U }x }^{2} +\norm{\Pro_{ H \cap U }x  - (\bar{t}x+(1-\bar{t})y) }^{2} =	\norm{x- (\bar{t}x+(1-\bar{t})y) }^{2} =(1-\bar{t})^{2} \norm{x-y}^{2} \leq \norm{x-y}^{2}.
	\end{align*}
	Note that $y \in W \cap U$ is arbitrary. Hence, we obtain that $(\forall y \in W \cap U)$ $\norm{ x - \Pro_{ H \cap U }x } \leq \norm{x-y}$,
	which, combining with the fact that $\Pro_{ H \cap U }x \in H\cap U \subseteq W\cap U$,  implies that $\Pro_{ H \cap U }x=\Pro_{ W \cap U }x$.
\end{proof}

The   result   $(\forall x \in U)$ $\CC{\mathcal{S}}(x) = \Pro_{H_{x} \cap U}x$ in the  following \cref{prop:CUC}\cref{prop:CUC:CCS} reduces to \cite[Lemma~3]{BBCS2020} when $\mathcal{H} =\mathbb{R}^{n}$. In fact, the idea of  proof of \cref{prop:CUC}\cref{prop:CUC:CCS} is   almost the same with that of \cite[Lemma~3]{BBCS2020}. The following \cref{prop:CUC}\cref{prop:CUC:ineq} illustrates that the inequality $(4)$ in \cite[Lemma~5]{BBCS2020} is actually an equation.
In the following \cref{prop:CUC}\cref{prop:CUC:conver}, we apply the demiclosedness principle for the circumcenter mapping proved in our \cite[Theorem~3.2]{BOyW2018Proper} to  generalize \cite[Theorem~1]{BBCS2020} from $\mathbb{R}^{n}$ to general Hilbert spaces and from the convergence to the weak convergence.

Note that the reflector associated with general convex set is generally not isometric. Hence, the following result is not a direct corollary of \cref{theorem:CCSHRT12}.

\begin{proposition} \label{prop:CUC}
	Let $C$ be a closed convex set of $\mathcal{H}$ and let $U$ be a closed affine subspace of $\mathcal{H}$ with $C \cap U \neq \varnothing$. Set $	\mathcal{S} := \{ \Id, \R_{C}, \R_{U}\R_{C}   \}$.
	For every $x \in \mathcal{H}$, define
				\begin{align*}
&	H_{x} := \begin{cases}
	\{  y \in \mathcal{H} ~:~ \innp{y, x-\Pro_{C}x} =\innp{\Pro_{C}x,  x-\Pro_{C}x}   \} \quad &\text{if } x \notin C;\\
	C  \quad &\text{if } x \in C,
	\end{cases} \\
&		W_{x} := \begin{cases}
	\{  y \in \mathcal{H} ~:~ \innp{y, x-\Pro_{C}x} \leq \innp{\Pro_{C}x,  x-\Pro_{C}x}   \} \quad &\text{if } x \notin C;\\
	C  \quad &\text{if } x \in C.
	\end{cases}  
	\end{align*}
Let $x \in U$.	Then the following statements hold:
	\begin{enumerate}
		\item  \label{prop:CUC:Hx}   $H_{x} \cap U \neq \varnothing$ and $ \Pro_{H_{x} \cap U}x= \Pro_{W_{x} \cap U}x$.
		\item \label{prop:CUC:CCS} Set $\tilde{\mathcal{S}} := \{  \Id, \R_{H_{x}}, R_{U}\R_{H_{x}}  \}$. Then $\mathcal{S}(x) =\tilde{\mathcal{S}} (x)$ and  $\CC{\mathcal{S}}(x)   = \CC{\tilde{\mathcal{S}}}(x) = \Pro_{H_{x} \cap U}x $.
	
		\item  \label{prop:CUC:formula}   $(\forall z \in C \cap U)$   $\CC{\mathcal{S}}x=\Pro_{\aff (\mathcal{S}(x))} z $. Moreover,
	\begin{align*}
	\CC{\mathcal{S}}(x) &=
	\begin{cases}
	x  \quad \text{if } x \in C;\\
	x+2\alpha ( \Pro_{U}\R_{C}(x) -x ), ~\text{~where~} \alpha=\left( 4- \frac{\norm{\R_{U}\R_{C}x -\R_{C}x}^{2} }{\norm{\R_{C}x -x}^{2}}  \right)^{-1} \in \left[\frac{1}{4}, +\infty \right[ \quad \text{otherwise}.
	\end{cases}
	\end{align*}

		\item \label{prop:CUC:ineq}  $(\forall z \in C\cap U )$ $ \norm{\CC{\mathcal{S}} (x) -x}^{2} +\norm{\CC{\mathcal{S}} (x) -z}^{2} = \norm{x-z}^{2}$.
Consequently, $\CC{\mathcal{S}} : U \to U$ is firmly quasinonexpansive. 

		\item \label{prop:CUC:conver} $(\forall k \in \mathbb{N})$ $\CC{\mathcal{S}}^{k}(x) \in U$. Moreover, there exists $\bar{x} \in K\cap U$ such that $ \CC{\mathcal{S}}^{k}(x) \weakly \bar{x}$.
	\end{enumerate}
\end{proposition}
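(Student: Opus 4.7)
The strategy is a reduction: since $\R_C$ is generally not an isometry, I cannot invoke \cref{cor:CCSHU} or \cref{theorem:CCSHRT12} on $\mathcal{S}$ directly. Instead I will show pointwise that $\mathcal{S}(x)=\tilde{\mathcal{S}}(x)$ and then transfer all the analysis to $\tilde{\mathcal{S}}=\{\Id,\R_{H_x},\R_U\R_{H_x}\}$, whose operators are isometries because $H_x$ and $U$ are closed affine subspaces.

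For \cref{prop:CUC:Hx}, the case $x\in C$ is immediate since $H_x=W_x=C$. For $x\notin C$, set $u:=x-\Pro_C x$ and $\eta:=\innp{\Pro_C x,u}$; the variational characterization of $\Pro_C$ places $C\subseteq W_x$, so $C\cap U\subseteq W_x\cap U$ is non-empty, while $\innp{x,u}-\eta=\norm{u}^2>0$ shows $x\in U\smallsetminus W_x$. Invoking \cref{lemma:HW} then delivers both conclusions. For \cref{prop:CUC:CCS}, a direct computation with \cref{fact:Projec:Hyperplane} gives $\Pro_{H_x}x=\Pro_C x$ (the factor $\eta-\innp{x,u}$ equals $-\norm{u}^2$), so $\R_{H_x}x=\R_C x$, and thus $\R_U\R_{H_x}x=\R_U\R_C x$, yielding $\mathcal{S}(x)=\tilde{\mathcal{S}}(x)$ and $\CC{\mathcal{S}}(x)=\CC{\tilde{\mathcal{S}}}(x)$. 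Applying \cref{cor:CCSHU} to $\tilde{\mathcal{S}}$ (whose hyperplane $H_x$ and affine $U$ have non-empty intersection by \cref{prop:CUC:Hx}) then gives $\CC{\tilde{\mathcal{S}}}(x)=\Pro_{H_x\cap U}x$.

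For \cref{prop:CUC:formula}, the projection identity comes from \cref{fact:CCSRestrict}\cref{fact:CCSRestrict:P:CCS} applied to $\tilde{\mathcal{S}}$ on its common fixed-point set $H_x\cap U$, combined with the obvious $\aff\tilde{\mathcal{S}}(x)=\aff\mathcal{S}(x)$. The explicit formula is obtained by invoking \cref{theorem:CCSHRT12}\cref{theorem:CCSHRT12:welldefined} with $T_1=\R_{H_x}$, $T_2=\R_U$ and $x\in U=\Fix\R_U$; the short identity $(\R_C x-x)+(\R_U\R_C x-x)=2(\Pro_U\R_C x-x)$ converts the three-term form into $x+2\alpha(\Pro_U\R_C x-x)$, with $\alpha$ unchanged because $\R_{H_x}x=\R_C x$. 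For \cref{prop:CUC:ineq}, the Pythagorean identity is \cref{fact:CCSRestrict}\cref{fact:CCSRestrict:P:eq} for $\tilde{\mathcal{S}}$ with $F=\Id$; the firm-quasinonexpansivity conclusion is then a consequence (and it can also be confirmed directly via $\CC{\mathcal{S}}(x)=\Pro_{W_x\cap U}x$, a projection onto a closed convex set, which is firmly nonexpansive and fixes $C\cap U$).

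Finally, for \cref{prop:CUC:conver}, the formula in \cref{prop:CUC:formula} gives $\CC{\mathcal{S}}(x)-x\in\pa U$, hence $\CC{\mathcal{S}}(x)\in U$, so the iterates remain in $U$ by induction. I would then apply \cref{theorem:quasidemiweakconver} to $\CC{\mathcal{S}}\colon U\to U$: the set $U$ is weakly sequentially closed, $\Fix\CC{\mathcal{S}}=C\cap U\neq\varnothing$, $\CC{\mathcal{S}}$ is $1$-SQNE by \cref{prop:CUC:ineq}, and \cref{fact:CCSRestrict}\cref{fact:CCSRestrict:Iddemiclosed} supplies demiclosedness of $\Id-\CC{\mathcal{S}}$ at $0$ (since $\R_C$ and $\R_U\R_C$ are nonexpansive as compositions of a firmly nonexpansive projection with an affine reflection). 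The main subtlety throughout is that $H_x$, $W_x$, and $\tilde{\mathcal{S}}$ all depend on the current point $x$, so the isometry-based machinery can only be used pointwise; the global statements about the single operator $\CC{\mathcal{S}}$ must be assembled from the nonexpansive/demiclosedness properties of the original fixed $\R_C$ and $\R_U\R_C$.
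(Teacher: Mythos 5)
Your proposal is correct and follows essentially the same route as the paper's proof: the pointwise reduction to $\tilde{\mathcal{S}}$ via $\Pro_{H_x}x=\Pro_{C}x$, \cref{lemma:HW} for \cref{prop:CUC:Hx}, \cref{cor:CCSHU} for \cref{prop:CUC:CCS}, the explicit isometry formula of \cref{theorem:CCSHRT12}\cref{theorem:CCSHRT12:welldefined} with $T_1=\R_{H_x}$, $T_2=\R_U$ for \cref{prop:CUC:formula}, and \cref{theorem:quasidemiweakconver} together with \cref{fact:CCSRestrict} for \cref{prop:CUC:conver}. The only differences are cosmetic --- you cite \cref{fact:CCSRestrict}\cref{fact:CCSRestrict:P:CCS}$\&$\cref{fact:CCSRestrict:P:eq} where the paper invokes \cref{theorem:CCSHRT12}\cref{theorem:CCSHRT12:welldefined} and the affine-projection Pythagoras identity (both arguments, like the paper's, really certify the claims of \cref{prop:CUC:formula} and \cref{prop:CUC:ineq} for $z$ in the fixed-point set $H_x\cap U$ of $\tilde{\mathcal{S}}$) --- and your additional direct observation that $\CC{\mathcal{S}}(x)=\Pro_{W_x\cap U}x$ with $C\cap U\subseteq W_x\cap U$ is a clean independent confirmation of the firm quasinonexpansiveness used in \cref{prop:CUC:conver}.
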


\begin{proof}
	\cref{prop:CUC:Hx}: If $x \in C$, then by  definitions of $H_{x}$ and $W_{x}$, we have that $x \in C \cap U =H_{x} \cap U \neq \varnothing$ and $x=\Pro_{C\cap U}x=\Pro_{H_{x} \cap U}x= \Pro_{W_{x} \cap U}x$. 
	Assume that $x \notin C$. 	Then by the definition of $W_{x}$, $x \notin W_{x}$. So, $x \in U \smallsetminus W_{x}$. Note that  \cite[Theorem~3.16]{BC2017} yields $C \subseteq W_{x}$, which, 
combined with $C \cap U \neq \varnothing$, implies that $W_{x} \cap U \neq \varnothing$.
	Hence, applying \cref{lemma:HW} with $W$ and $H$ replaced by $W_{x}$ and $H_{x}$ respectively, we see that $H_{x} \cap U \neq \varnothing$ and $ \Pro_{H_{x} \cap U}x= \Pro_{W_{x} \cap U}x$.

	\cref{prop:CUC:CCS}:   If $x \in C$, then $H_{x} =C$. So,   $\mathcal{S}(x) =\tilde{\mathcal{S}} (x) =\{x\} $ and  $\CC{\mathcal{S}}(x)    =x = \Pro_{C \cap U}x = \Pro_{H_{x} \cap U}x$.
	
	Assume $x \notin C$. 	According to \cref{fact:Projec:Hyperplane}, 
	\begin{align*}
	\Pro_{H_{x}}x =x + \frac{\innp{\Pro_{C}x,  x-\Pro_{C}x}  - \innp{ x, x-\Pro_{C}x  }   }{\norm{x-\Pro_{C}x}^{2}}(x-\Pro_{C}x)=x-(x-\Pro_{C}x)=\Pro_{C}x,
	\end{align*}
	which implies that $ \R_{C}x =   \R_{H_{x}}x$ and $ \R_{U}\R_{C}x= \R_{U}\R_{H_{x}}x$. 
	Hence, $\mathcal{S}(x) =\tilde{\mathcal{S}} (x)$ and, by \cref{defn:cir:map},  $\CC{\mathcal{S}}(x)   = \CCO{  \left( \{ x, \R_{C}x, \R_{U}\R_{C}x   \} \right) }
	= \CCO{\left( \{ x, \R_{H_{x}}x,  \R_{U}\R_{H_{x}}x   \} \right) }= \CC{\tilde{\mathcal{S}}}(x) $. 
	
	 Because $x \in U$, and, by \cref{prop:CUC:Hx}, $H_{x} \cap U \neq \varnothing$, applying \cref{cor:CCSHU} with $H=H_{x}$, we know that  
	\begin{align*} 
\CC{\mathcal{S}}(x)  =	\CC{\tilde{\mathcal{S}}}(x)=	\CCO{\left( \{ x, \R_{H_{x}}x,  \R_{U}\R_{H_{x}}x   \} \right) }=\Pro_{H_{x} \cap U}x.
	\end{align*}

	\cref{prop:CUC:formula}: Let $z \in C \cap U$. If $x \in C$, as we did in \cref{prop:CUC:CCS}, it is easy to see that  $\aff \mathcal{S}(x) =\aff \{ x, \R_{C}x, \R_{U}\R_{C}x   \} =\{x\}$ and that $\CC{\mathcal{S}}(x)=x=\Pro_{\aff (\mathcal{S}(x))} z$.
	
	Assume $x \notin C$. Now $U$ is an affine subspace and $H_{x}$ is a hyperplane, which, by \cite[Lemma~2.23(i)]{BOyW2019Isometry}, implies that both $\R_{H_{x}}$ and $\R_{U} $ are isometries. 
	By definition of $H_{x}$ and \cref{fact:Projec:Hyperplane}, it is easy to see that  $\R_{C}x=\R_{H_{x}}x$, and that $x \notin C$ implies that $x \notin H_{x}$ and $x \neq \R_{H_{x}}x$. 
	Hence, applying \cref{theorem:CCSHRT12}\cref{theorem:CCSHRT12:welldefined} with $T_{1}$ and $T_{2}$ replaced by $\R_{H_{x}}$ and   $\R_{U}$ respectively, we obtain $\CC{\tilde{\mathcal{S}}}(x) =\Pro_{\aff (\tilde{\mathcal{S}}(x))} z =	x+2\alpha \left( \frac{ \R_{C}x +\R_{U}\R_{C}x }{2}  -x \right)$, where $\alpha:=\left( 4- \frac{\norm{\R_{U}\R_{C}x -\R_{C}x}^{2} }{\norm{\R_{C}x -x}^{2}}  \right)^{-1} \in \left[\frac{1}{4}, +\infty \right[ \,$.
	Combine these results with  the fact  $\frac{ \R_{C} +\R_{U}\R_{C} }{2} =\Pro_{U}\R_{C} $ and   \cref{prop:CUC:CCS} to obtain the required results.

\cref{prop:CUC:ineq}: Apply \cref{fact:CCSRestrict}\cref{fact:CCSRestrict:fix} with $T_{1}$, $T_{2}$,  and $\mathcal{S}$ replaced by  $\R_{C}$, $ \R_{U}$ and $  \{ \Id, \R_{C}, \R_{U}\R_{C}   \} $ respectively  to obtain that $\Fix  \CC{\mathcal{S}}=C \cap U$.

Let $z \in  C \cap U$. By \cref{prop:CUC:formula},   $\CC{\mathcal{S}}x=\Pro_{\aff (\mathcal{S}(x))} z $. 	Note that the definition of $\mathcal{S}(x)$ yields $x \in \mathcal{S}(x)$, and that $\aff(\mathcal{S}(x))$ is an affine subspace. Hence, by \cite[Proposition~2.10(iii)]{BOyW2018Proper}, we have that 
\begin{align*}
 \norm{z - \Pro_{\aff(\mathcal{S}(x))}z}^{2}+\norm{ \Pro_{\aff(\mathcal{S}(x))}z-x}^{2} =\norm{z-x}^{2} 
\Leftrightarrow \norm{z - \CC{\mathcal{S}}x }^{2}+\norm{ \CC{\mathcal{S}}(x) -x}^{2} =\norm{z-x}^{2},
\end{align*}
which, by \cref{defn:SQNE}, implies that $\CC{\mathcal{S}} : U \to U$ is firmly quasinonexpansive.

\cref{prop:CUC:conver}:  
Because, by 	\cref{prop:CUC:CCS},  $ \CC{\mathcal{S}} : U \to U$ is well-defined, and $\R_{C}$ and $\R_{U}\R_{C}$ are nonexpansive, using
\cref{fact:CCSRestrict}\cref{fact:CCSRestrict:fix}$\&$\cref{fact:CCSRestrict:Iddemiclosed},  we have that $\Fix \CC{\mathcal{S}} =C\cap U \neq \varnothing $ and that $\Id -\CC{\mathcal{S}} $ is  demiclosed at $0$. Moreover, \cref{prop:CUC:ineq} shows that $\CC{\mathcal{S}} : U \to U$ is firmly quasinonexpansive. Because, by \cite[Theorem~3.34]{BC2017}, $U$ is a nonempty closed convex set implies that $U$ is weakly sequentially closed, using
\cref{theorem:quasidemiweakconver}, we obtain that  $(\CC{\mathcal{S}}^{k}(x))_{k \in \mathbb{N}}$ weakly converges to one point in $C \cap U$.
\end{proof}


\subsection*{One-step convergence of circumcentered isometry methods}

\begin{lemma} \label{lem:FixIsometryContinu}
	Let $F  : \mathcal{H} \to \mathcal{H}$ and $T : \mathcal{H} \to \mathcal{H}$ be linear and continuous. Let $x \in \mathcal{H}$ and let $z \in \mathcal{H}$. 
	\begin{enumerate}
		\item \label{lem:FixIsometryContinu:In} Suppose that $\mathcal{H} =\mathbb{R}^{n}$ and that $F$ is isometric.  Assume that $z \notin \Fix T$,  and that $Fx \in \Fix T$. Then 
		\begin{align*}
		(\forall t \in \mathbb{R} \smallsetminus \{0\} ) \quad  F (x +t F^{*}z) \notin \Fix T.
		\end{align*}
		\item \label{lem:FixIsometryContinu:NotIn}     Assume  that $Fx \notin \Fix T$. Then there exists $\bar{t} \in \mathbb{R}_{++}$ such that
		\begin{align*}
		(\forall t \in [-\bar{t}, \bar{t}]) \quad  F (x +t  z) \notin \Fix T.
		\end{align*}
	\end{enumerate}	
\end{lemma}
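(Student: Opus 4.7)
Part (ii) is a pure continuity argument. Since $T$ is linear and continuous, $T-\Id$ is continuous, so $\Fix T=\ker(T-\Id)$ is closed and its complement $\mathcal{H}\setminus\Fix T$ is open. The map $\varphi:\mathbb{R}\to\mathcal{H}\colon t\mapsto F(x+tz)=Fx+tFz$ is continuous (by continuity of $F$), and by hypothesis $\varphi(0)=Fx\in\mathcal{H}\setminus\Fix T$. Therefore $\varphi^{-1}(\mathcal{H}\setminus\Fix T)$ is an open subset of $\mathbb{R}$ containing $0$, and any $\bar{t}\in\mathbb{R}_{++}$ with $[-\bar{t},\bar{t}]$ inside this open set works.

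For part (i) the key structural observation is that a linear isometry on the finite-dimensional Hilbert space $\mathbb{R}^n$ is an orthogonal operator: $F^*F=\Id$ (from the polarization identity applied to $\|F\cdot\|=\|\cdot\|$), and since $F$ is then an injective linear self-map of a finite-dimensional space it is bijective with $F^{-1}=F^*$, so $FF^*=\Id$ as well. This is exactly where the hypothesis $\mathcal{H}=\mathbb{R}^n$ is used; on a general Hilbert space $FF^*=\Id$ can fail. First I would record this fact, then use it to simplify
\begin{equation*}
F(x+tF^*z)=Fx+t\,FF^*z=Fx+tz.
\end{equation*}

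Next, suppose towards a contradiction that $Fx+tz\in\Fix T$ for some $t\neq 0$. Using linearity of $T$ together with the hypothesis $Fx\in\Fix T$ (so $T(Fx)=Fx$), I would compute
\begin{equation*}
Fx+tz=T(Fx+tz)=T(Fx)+tT(z)=Fx+tT(z),
\end{equation*}
which yields $t(T(z)-z)=0$. Dividing by the nonzero scalar $t$ forces $T(z)=z$, i.e.\ $z\in\Fix T$, contradicting the hypothesis $z\notin\Fix T$. Hence $F(x+tF^*z)\notin\Fix T$ for every $t\in\mathbb{R}\setminus\{0\}$.

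The main (minor) obstacle is really just making explicit why finite-dimensionality is needed in (i)—namely the passage from $F^*F=\Id$ to $FF^*=\Id$—since everything else is a one-line linear-algebra manipulation, and part (ii) is an immediate continuity/openness remark.
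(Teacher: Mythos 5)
Your proposal is correct and follows essentially the same route as the paper: in (i) both reduce $F(x+tF^*z)$ to $Fx+tz$ via $FF^*=\Id$ (the paper cites this fact for linear isometries on $\mathbb{R}^n$, you rederive it) and then conclude from $Fx\in\Fix T$, $z\notin\Fix T$, $t\neq 0$ that $Fx+tz\notin\Fix T$; in (ii) both are the same continuity argument, with your use of the openness of $\mathcal{H}\smallsetminus\Fix T$ replacing the paper's equivalent use of the continuous function $t\mapsto\dist(F(x+tz),\Fix T)$.
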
	

\begin{proof}
	\cref{lem:FixIsometryContinu:In}: Because $\mathcal{H} =\mathbb{R}^{n}$ and $F$ is a linear isometry, by \cite[Lemma~3.9]{BOyW2019LinearConvergence}, $FF^{*}= \Id$. 
	Let $t \in \mathbb{R} \smallsetminus \{0\} $. Then $ F (x +t F^{*}z) = Fx +t FF^{*}z= Fx +t z $. Because $\Fix T$ is a linear subspace, by assumptions $Fx \in \Fix T$ and $ z \notin \Fix T$,  $F (x +t F^{*}z) =Fx +t z \notin \Fix T$.
	
	\cref{lem:FixIsometryContinu:NotIn}: Define $f :\mathbb{R} \to \mathbb{R}$ by $	(\forall t \in \mathbb{R})$ $f(t) := \dist (F (x +t z) , \Fix T )$. Note that
 $F$ is continuous, that $\Fix T$ is a nonempty closed  linear subspace of $\mathcal{H}$,  and  that the distance function $\dist(\cdot, \Fix T)$ is continuous. Hence, $f$ is continuous.  Because  $\Fix T$ is closed, $Fx \notin \Fix T$ yields $f(0) = \dist (F x , \Fix T ) >0$. Hence, there exists $\bar{t}>0$ such that $(\forall t \in [-\bar{t}, \bar{t}]) $ $ \dist (F (x +t z) , \Fix T )=f(t) >0$, which implies that $F (x +t z) \notin \Fix T$.
\end{proof}

\begin{lemma} \label{lemma:FixSpan}
 Let $(\forall i \in \I)$ $T_{i}$ is linear and nonexpansive, and let $x \in \mathcal{H}$. The following statements hold:
	\begin{enumerate}
		\item \label{lemma:FixSpan:subseteq}  
	$\spn \{ T_{1}x -x, T_{2}T_{1}x -T_{1}x ,  \dots, T_{m}\cdots T_{2}T_{1}x -T_{m-1}\cdots T_{2}T_{1}x \}   \subseteq \left( \cap^{m}_{i=1} \Fix T_{i} \right)^{\perp}$ and $\spn \{ T_{1}x -x, T_{2}x -x ,  \ldots, T_{m}x -x \}  \subseteq \left( \cap^{m}_{i=1} \Fix T_{i} \right)^{\perp}$.

		\item \label{lemma:FixSpan:=} Suppose that  $(\forall i \in \I)$ $\left( \Fix T_{i} \right)^{\perp}$ is one-dimensional. 
			\begin{enumerate}
			\item \label{lemma:FixSpan:=:Ti} Assume that $(\forall i \in \I)$ $x \notin \Fix T_{i} $. Then  $\spn \{ T_{1}x -x, T_{2}x -x ,  \ldots, T_{m}x -x \}  = \left( \cap^{m}_{i=1} \Fix T_{i} \right)^{\perp}$. 
			\item \label{lemma:FixSpan:=:TiT1} Assume that $(\forall i \in \I)$ $T_{i-1}\cdots T_{1} x \notin \Fix T_{i} $.\footnotemark
			\footnotetext{We use the empty product convention, so if $i=1$, $T_{i-1}\cdots T_{1} =\Id$.}
			Then  $\spn \{ T_{1}x -x, T_{2}T_{1}x -T_{1}x ,  \dots, T_{m}\cdots T_{2}T_{1}x -T_{m-1}\cdots T_{2}T_{1}x \}   = \left( \cap^{m}_{i=1} \Fix T_{i} \right)^{\perp}$.
		\end{enumerate}
	\end{enumerate}
\end{lemma}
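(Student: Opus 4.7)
The plan is to reduce the whole statement to two elementary ingredients: (a) for any linear nonexpansive $T:\mathcal{H}\to\mathcal{H}$ with a fixed point, $\Range(\Id-T)\subseteq(\Fix T)^{\perp}$; and (b) for finitely many closed linear subspaces $V_{1},\ldots,V_{m}$ of $\mathcal{H}$, one has $(\cap_{i=1}^{m}V_{i})^{\perp}=\overline{\sum_{i=1}^{m}V_{i}^{\perp}}$. Ingredient (b) is standard duality; for (a) I would give a short self-contained proof (the excerpt invokes the analogous statement only for linear isometries, via \cite[Lemma~2.29]{BOyW2019LinearConvergence}): if $Tz=z$ and $\|T\|\leq 1$, then $\|T^{*}\|\leq 1$, while $\innp{T^{*}z,z}=\innp{z,Tz}=\|z\|^{2}$, so Cauchy--Schwarz forces equality throughout and hence $T^{*}z=z$. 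Consequently $\innp{x-Tx,z}=\innp{x,z-T^{*}z}=0$ for every $x\in\mathcal{H}$, which gives (a).

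For part \cref{lemma:FixSpan:subseteq}, apply (a) to each $T_{i}$ to conclude $T_{i}x-x\in(\Fix T_{i})^{\perp}\subseteq(\cap_{j}\Fix T_{j})^{\perp}$, yielding the second inclusion. For the first, set $y_{0}:=x$ and $y_{i}:=T_{i}\cdots T_{1}x$, so that the $i$th difference equals $T_{i}y_{i-1}-y_{i-1}$; applying (a) with $x$ replaced by $y_{i-1}$ places this vector in $(\Fix T_{i})^{\perp}\subseteq(\cap_{j}\Fix T_{j})^{\perp}$.

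For part \cref{lemma:FixSpan:=}, assume in addition that each $(\Fix T_{i})^{\perp}$ is one-dimensional. In case \cref{lemma:FixSpan:=:Ti}, the hypothesis $x\notin\Fix T_{i}$ forces $T_{i}x-x\neq 0$, so the one-dimensionality gives $\spn\{T_{i}x-x\}=(\Fix T_{i})^{\perp}$; summing over $i$ yields $\spn\{T_{1}x-x,\ldots,T_{m}x-x\}=\sum_{i=1}^{m}(\Fix T_{i})^{\perp}$. Because this is a sum of finitely many one-dimensional subspaces, it is finite-dimensional, hence closed, and (b) identifies it with $(\cap_{i=1}^{m}\Fix T_{i})^{\perp}$. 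Case \cref{lemma:FixSpan:=:TiT1} is entirely analogous using the vectors $y_{i-1}=T_{i-1}\cdots T_{1}x$ in place of $x$: the hypothesis $T_{i-1}\cdots T_{1}x\notin\Fix T_{i}$ guarantees $T_{i}y_{i-1}-y_{i-1}\neq 0$, and the same summation argument closes the proof.

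The only real point of care is (a) for a linear \emph{nonexpansive} operator (as opposed to an isometry), but the Cauchy--Schwarz derivation above handles it cleanly. The finite-dimensional closedness observation in part \cref{lemma:FixSpan:=} is what lets the argument go through in an arbitrary Hilbert space without any density or closure subtleties.
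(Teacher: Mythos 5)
Your proposal is correct and follows essentially the same route as the paper: both rest on the inclusion $\Range(\Id-T_{i})\subseteq(\Fix T_{i})^{\perp}$ for linear nonexpansive $T_{i}$, the duality identity $\left(\cap_{i}\Fix T_{i}\right)^{\perp}=\overline{\sum_{i}(\Fix T_{i})^{\perp}}$, and the observation that a finite sum of one-dimensional subspaces is closed, so the closure can be dropped in part (ii). The only difference is that you supply a short self-contained Cauchy--Schwarz proof that $Tz=z$ implies $T^{*}z=z$ (hence the range inclusion), where the paper simply cites \cite[Lemma~2.29]{BOyW2019LinearConvergence}; that argument is correct and adds nothing structurally new.
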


\begin{proof}
	Because $(\forall i \in \I)$ $T_{i}$ is linear and nonexpansive, due to \cite[Lemma~2.29]{BOyW2019LinearConvergence}, 
	\begin{align} \label{eq:lem:Rn:CCSP:Contain}
	(\forall i \in \I) \quad  \overline{\Range} (\Id -T_{i}) =( \Fix T_{i} )^{\perp}.
	\end{align}
Moreover, due to \cite[Theorems~4.6(5)]{D2012}, 
		\begin{align} \label{eq:FixTiPerp}
		\left( \cap_{i \in \I} \Fix T_{i} \right)^{\perp}
	 = \overline{ \sum_{i \in \I}  (\Fix T_{i} )^{\perp}  }
	 \stackrel{\cref{eq:lem:Rn:CCSP:Contain}}{=}	\overline{ \sum_{i \in \I}  \overline{\Range} (\Id -T_{i}) }.		
		\end{align}
		
	\cref{lemma:FixSpan:subseteq}:  Recall that $(\forall i \in \I)$ $T_{i} -\Id$ is linear. So, $\spn \{ T_{1}x -x, T_{2}T_{1}x -T_{1}x ,  \dots, T_{m}\cdots T_{2}T_{1}x -T_{m-1}\cdots T_{2}T_{1}x \}   
= \sum_{i \in \I} \spn \{ T_{i}T_{i-1}\cdots T_{1}x -T_{i-1}\cdots T_{1}x\} \subseteq \sum_{i \in \I} \Range (\Id -T_{i}) \stackrel{\cref{eq:FixTiPerp}}{\subseteq}\left( \cap^{m}_{i=1} \Fix T_{i} \right)^{\perp}$, and $\spn \{ T_{1}x -x, T_{2}x -x ,  \ldots, T_{m}x -x \}  =\sum_{i \in \I} \spn \{T_{i}x-x\}  \subseteq \sum_{i \in \I} \Range (\Id -T_{i}) \stackrel{\cref{eq:FixTiPerp}}{\subseteq}\left( \cap^{m}_{i=1} \Fix T_{i} \right)^{\perp}$.

	\cref{lemma:FixSpan:=}: Because $(\forall i \in \I)$ $\left( \Fix T_{i} \right)^{\perp}$ is one-dimensional,   the finite-dimensional linear space $\sum_{i \in \I}  (\Fix T_{i} )^{\perp} $ is closed. Combine this with
	\cite[Theorems~4.6(5)]{D2012} to see that
	\begin{align} \label{eq:FixTiPerp:Fix}
	\left( \cap_{i \in \I} \Fix T_{i} \right)^{\perp}
	= \overline{ \sum_{i \in \I}  (\Fix T_{i} )^{\perp}  }
	= \sum_{i \in \I}  (\Fix T_{i} )^{\perp}.  
	\end{align}
	
	\cref{lemma:FixSpan:=:Ti}: Because $(\forall i \in \I)$ $x \notin \Fix T_{i} $ and $\left( \Fix T_{i} \right)^{\perp}$ is one-dimensional,  we have that $(\forall  i \in \I)$ $ \spn \{T_{i}x-x\} =\overline{\Range} (\Id -T_{i}) \stackrel{\cref{eq:lem:Rn:CCSP:Contain}}{=} \left( \Fix T_{i} \right)^{\perp}$.  Therefore, $\spn \{ T_{1}x -x, T_{2}x -x ,  \ldots, T_{m}x -x \}  =\sum_{i \in \I} \spn \{T_{i}x-x\}  =\sum_{i \in \I} \left( \Fix T_{i} \right)^{\perp} \stackrel{\cref{eq:FixTiPerp:Fix}}{=} \left( \cap_{i \in \I} \Fix T_{i} \right)^{\perp}$.

	\cref{lemma:FixSpan:=:TiT1}: Because $(\forall i \in \I)$ $T_{i-1}\cdots T_{1} x \notin \Fix T_{i} $ and $\left( \Fix T_{i} \right)^{\perp}$ is one-dimensional,  we have that $(\forall  i \in \I)$ $ \spn \{T_{i}T_{i-1}\cdots T_{1} x-T_{i-1}\cdots T_{1} x\} =\overline{\Range} (\Id -T_{i})  \stackrel{\cref{eq:lem:Rn:CCSP:Contain}}{=} \left( \Fix T_{i} \right)^{\perp}$.  
	Hence, 
$
	\spn \{ T_{1}x -x, T_{2}T_{1}x -T_{1}x ,  \dots, T_{m}\cdots T_{2}T_{1}x -T_{m-1}\cdots T_{2}T_{1}x \}  =\sum_{i \in \I} \spn \{T_{i}T_{i-1}\cdots T_{1} x-T_{i-1}\cdots T_{1} x\}  =\sum_{i \in \I} \left( \Fix T_{i} \right)^{\perp} \stackrel{\cref{eq:FixTiPerp:Fix}}{=} \left( \cap_{i \in \I} \Fix T_{i} \right)^{\perp}.
$	 
\end{proof}

The following result provides sufficient conditions for the circumcentered isometry methods   finding the best approximation point in one step.  In fact,  \cref{prop:Tn:CCSP} under the assumption  \cref{prop:Tn:CCSP:T12mTm} below and \cref{prop:Rn:CCSP:Condition} reduce to  \cite[Lemma~3]{BCS2019}  and \cite[Lemma~4]{BCS2019}, respectively, when $\mathcal{H} =\mathbb{R}^{n}$ and $(\forall i \in \{1, \ldots, m\})$ $T_{i} =\R_{H_{i}}$ with $H_{i}$ being a hyperplane. 
\begin{theorem} \label{prop:Tn:CCSP}
	Let $T_{1}, \ldots, T_{m}$ be  isometries from $\mathcal{H} $ to $\mathcal{H}$. Assume that $z \in \cap^{m}_{i =1} \Fix T_{i} \neq \varnothing$ and that $(\forall i \in \I)$ $(\Fix T_{i} -z)^{\perp}$ is one-dimensional linear subspace of $\mathcal{H}$.  Let $x \in \mathcal{H}$. 
	Assume that  one of the following items hold.
	\begin{enumerate}
		\item \label{prop:Tn:CCSP:T1Tm}  $\mathcal{S} :=\{ \Id,  T_{1},T_{2},  \ldots, T_{m} \}$ 
		and  $(\forall i \in \I)$ $x \notin \Fix T_{i}$. 
		\item  \label{prop:Tn:CCSP:T12mTm}    $\mathcal{S} := \{ \Id,  T_{1}, T_{2}T_{1},  \ldots, T_{m}\cdots T_{2}T_{1} \} $ and  $(\forall i \in \I)$ $T_{i-1}\cdots T_{1}x \notin \Fix T_{i}$. 
	\end{enumerate}	
	Then $\CC{\mathcal{S}}x =  \Pro_{\cap^{m}_{i=1} \Fix T_{i}}x$.
\end{theorem}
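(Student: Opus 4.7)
The plan is to reduce to the linear case by translation, invoke \cref{fact:CCSRestrict}\cref{fact:CCSRestrict:P:CCS} to rewrite $\CC{\mathcal{S}} x$ as the projection of any common fixed point onto $\aff(\mathcal{S}(x))$, and then use \cref{lemma:FixSpan} to show that $\Pro_{\cap^{m}_{i=1} \Fix T_{i}} x$ already lies in this affine hull, so that the projection fixes it.

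First, I would apply \cref{lemma:affineTOlinear} with the given $z \in \cap^{m}_{i=1} \Fix T_{i}$ to translate the setup; by \cref{lemma:affineTOlinear}\cref{lemma:affineTOlinear:F}$\&$\cref{lemma:affineTOlinear:EQ}, both the circumcenter iteration and the projection onto $\cap^{m}_{i=1} \Fix T_{i}$ transform compatibly, reducing the claim to the case $0 \in \cap^{m}_{i=1} \Fix T_{i}$. Under this reduction, each $T_i$ is linear (by \cite[Proposition~3.4]{BOyW2019LinearConvergence}), each $\Fix T_i$ is a closed linear subspace, and $(\Fix T_i)^{\perp}$ is one-dimensional. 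Set $F := \cap^{m}_{i=1} \Fix T_{i}$. In case \cref{prop:Tn:CCSP:T1Tm}, \cref{fact:CCSRestrict}\cref{fact:CCSRestrict:P:CCS} applies directly to $\mathcal{S}$; in case \cref{prop:Tn:CCSP:T12mTm}, the composites $\tilde{T}_{i} := T_{i}\cdots T_{1}$ are linear isometries with $\cap^{m}_{i=1} \Fix \tilde{T}_{i} = F$ (as used in the proof of \cref{prop:conver:normconve}), so writing $\mathcal{S} = \{\Id, \tilde{T}_{1}, \ldots, \tilde{T}_{m}\}$ allows the same fact to apply. In either case, choosing the common fixed point $\Pro_{F} x \in F$ yields
\[\CC{\mathcal{S}} x = \Pro_{\aff(\mathcal{S}(x))}(\Pro_{F} x).\]

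It then suffices to prove $\Pro_{F} x \in \aff(\mathcal{S}(x))$, since then the projection on the right simply returns $\Pro_{F} x$. Write $\aff(\mathcal{S}(x)) = x + V$, where $V = \spn\{T_{i} x - x : i \in \I\}$ in case \cref{prop:Tn:CCSP:T1Tm} and $V = \spn\{T_{i}\cdots T_{1} x - T_{i-1}\cdots T_{1} x : i \in \I\}$ in case \cref{prop:Tn:CCSP:T12mTm}. The hypotheses of the two cases exactly match those of \cref{lemma:FixSpan}\cref{lemma:FixSpan:=:Ti} and \cref{lemma:FixSpan}\cref{lemma:FixSpan:=:TiT1}, respectively, and therefore give $V = F^{\perp}$. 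Since $\Pro_{F} x - x = -\Pro_{F^{\perp}} x \in F^{\perp} = V$, the inclusion $\Pro_{F} x \in x + V = \aff(\mathcal{S}(x))$ holds, and the conclusion $\CC{\mathcal{S}} x = \Pro_{F} x$ follows.

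The main subtlety is the uniform treatment of the two cases: case \cref{prop:Tn:CCSP:T12mTm} is folded into the framework of \cref{fact:CCSRestrict}\cref{fact:CCSRestrict:P:CCS} by passing to the composites $T_{i}\cdots T_{1}$, whose common fixed point set still equals $F$. The key geometric input is the one-dimensionality of each $(\Fix T_{i})^{\perp}$, which together with the non-fixedness hypotheses in each case is exactly what forces the candidate spans in \cref{lemma:FixSpan}\cref{lemma:FixSpan:=} to fill out all of $F^{\perp}$, and hence forces $\aff(\mathcal{S}(x))$ to be large enough to contain $\Pro_{F} x$.
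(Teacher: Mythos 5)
Your proof is correct and follows essentially the same route as the paper's: reduce to linear isometries fixing $0$ via \cref{lemma:affineTOlinear}, write $\CC{\mathcal{S}}x=\Pro_{\aff(\mathcal{S}(x))}\big(\Pro_{\cap^{m}_{i=1}\Fix T_{i}}x\big)$ via \cref{fact:CCSRestrict}\cref{fact:CCSRestrict:P:CCS} (folding case \cref{prop:Tn:CCSP:T12mTm} in through the composites, whose common fixed point set is unchanged), and identify $\pa\aff(\mathcal{S}(x))$ with $\left(\cap^{m}_{i=1}\Fix T_{i}\right)^{\perp}$ via \cref{lemma:FixSpan}\cref{lemma:FixSpan:=}. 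Your endgame is marginally more direct --- you note $\Pro_{\cap^{m}_{i=1}\Fix T_{i}}x - x \in \left(\cap^{m}_{i=1}\Fix T_{i}\right)^{\perp}$ so that $\Pro_{\cap^{m}_{i=1}\Fix T_{i}}x$ already lies in $\aff(\mathcal{S}(x))$ and is fixed by the projection, whereas the paper shows the difference $\Pro_{\cap^{m}_{i=1}\Fix T_{i}}x - \CC{\mathcal{S}}x$ lies in both $\cap^{m}_{i=1}\Fix T_{i}$ and its orthogonal complement --- but the ingredients and structure are the same.
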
	

\begin{proof}
	Because $T_{1}, \ldots, T_{m}$ are   isometries with $z\in \cap_{i \in \I} \Fix T_{i} \neq \varnothing$ and compositions of isometries are affine  isometries, applying \cref{lemma:affineTOlinear}\cref{lemma:affineTOlinear:F}$\&$\cref{lemma:affineTOlinear:EQ}, without loss of generality, we assume that $(\forall i \in \I)$ $T_{i}$ is linear and that $z=0$.

	Assume that \cref{prop:Tn:CCSP:T1Tm} or \cref{prop:Tn:CCSP:T12mTm} holds. Denote by $\widehat{\aff} \mathcal{S}(x) := \pa \aff \mathcal{S}(x)$. Note that $\Id \in \mathcal{S}$ implies that $x \in  \aff \mathcal{S}(x)$. Then $ \widehat{\aff} \mathcal{S}(x)= \aff \mathcal{S}(x)  -x$. 
In view of  assumptions and \cref{lemma:FixSpan}\cref{lemma:FixSpan:=}, we have that $\widehat{\aff} \mathcal{S}(x)= \left( \cap^{m}_{i=1} \Fix T_{i} \right)^{\perp}$.  This combined with \cite[Theorems~5.8(6)]{D2012} implies that 
	\begin{align}\label{eq:prop:Tn:CCSP}
	(\widehat{\aff} \mathcal{S}(x) )^{\perp}= \left( \cap^{m}_{i=1} \Fix T_{i} \right)^{\perp \perp} =  \cap^{m}_{i=1} \Fix T_{i}.
	\end{align}
	Using  \cite[Proposition~4.2(iii)]{BOyW2019Isometry} and    \cite[Theorems~5.8(2)]{D2012}, we have that 
	\begin{align} \label{lem:Rn:CCSP:onePart}
	\Pro_{\cap^{m}_{i=1} \Fix T_{i}}x -  \CC{\mathcal{S}}x  = \Pro_{\cap^{m}_{i=1} \Fix T_{i}}\CC{\mathcal{S}}x - \CC{\mathcal{S}}x =  -  \Pro_{\left(\cap^{m}_{i=1} \Fix T_{i}\right)^{\perp} }(\CC{\mathcal{S}}(x) ) \in \left( \cap^{m}_{i=1} \Fix T_{i} \right)^{\perp}.
	\end{align} 
On the other hand, by \cref{fact:CCSRestrict}\cref{fact:CCSRestrict:P:CCS}, \cref{fac:SetChangeProje} and  \cite[Theorems~5.8(2)]{D2012}, we obtain that
		\begin{align*}
	\Pro_{\cap^{m}_{i=1} \Fix T_{i}}x -  \CC{\mathcal{S}}x    &=  \Pro_{\cap^{m}_{i=1} \Fix T_{i}}x -  \Pro_{ \aff \mathcal{S}(x)} \Pro_{\cap^{m}_{i=1} \Fix T_{i}}x  
		 = \Pro_{\cap^{m}_{i=1} \Fix T_{i}}x - \left( x + \Pro_{ \widehat{\aff} \mathcal{S}(x)} (\Pro_{\cap^{m}_{i=1} \Fix T_{i}}(x)  -x) \right) \\
&	= \Pro_{ ( \widehat{\aff} \mathcal{S}(x))^{\perp}} (\Pro_{\cap^{m}_{i=1} \Fix T_{i}}(x)  -x) \in  ( \widehat{\aff} \mathcal{S}(x))^{\perp} \stackrel{\cref{eq:prop:Tn:CCSP}}{=} \cap^{m}_{i=1} \Fix T_{i},
	\end{align*}
	which, combined with \cref{lem:Rn:CCSP:onePart}, forces that 
 $\Pro_{\cap^{m}_{i=1} \Fix T_{i}}x -  \CC{\mathcal{S}}x \in \left( \cap^{m}_{i=1} \Fix T_{i} \right)^{\perp} \cap \left( \cap^{m}_{i=1} \Fix T_{i}  \right)= \{0\}$, that is,  $\Pro_{\cap^{m}_{i=1} \Fix T_{i}}x =  \CC{\mathcal{S}}x $. 
\end{proof}

In the following \Cref{prop:Rn:CCSTmT1:Condition,prop:Rn:CCSP:Condition}, we affirm that the sets of points $x$ satisfying the conditions presented in \cref{prop:Tn:CCSP} are actually dense in $\mathcal{H}$. Note that in the proof of \cref{prop:Rn:CCSP:Condition}, $\mathcal{H} =\mathbb{R}^{n}$ is required, while it is not necessary in \cref{prop:Rn:CCSTmT1:Condition}.

\begin{proposition} \label{prop:Rn:CCSTmT1:Condition}
	Let $(\forall i \in \I)$ $T_{i} : \mathcal{H} \to \mathcal{H}$ be isometric such that $z \in \cap^{m}_{i=1} \Fix T_{i} \neq \varnothing $ and $(\forall i \in \I)$ $(\Fix T_{i} -z)^{\perp} \neq \{0\}$. Set $ \mathcal{S} := \{ \Id,  T_{1}, T_{2},  \ldots, T_{m} \}  $.  
	Let $x \in \mathcal{H}$.  Then  for every $\epsilon \in \mathbb{R}_{++}$, there exists $y_{x} \in \mathbf{B}[x;\epsilon]$ such that $ \Pro_{\cap^{m}_{i=1} \Fix T_{i}}x  = \Pro_{\cap^{m}_{i=1} \Fix T_{i}} y_{x}$ and 
	$(\forall i \in \I)$ $y_{x} \notin \Fix T_{i}$. 	
	Consequently, there exists a sequence $(x_{k} )_{k \in \mathbb{N}}$ in $\mathcal{H}$ such that $ \lim_{k \to \infty} x_{k} =x$, $ (\forall k \in \mathbb{N})$ $ \Pro_{\cap^{m}_{i=1} \Fix T_{i}}x  = \Pro_{\cap^{m}_{i=1} \Fix T_{i}} x_{k}$ and $(\forall i \in \{1, \ldots, m\})$ $ x_{k} \notin \Fix T_{i}$. 
\end{proposition}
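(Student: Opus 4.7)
The plan is to produce $y_x$ as a small perturbation $x + \sum_{j=1}^m s_j u_j$ where $u_j \in (\Fix T_j - z)^{\perp} \smallsetminus \{0\}$ and $s \in \mathbb{R}^m$ is chosen to dodge finitely many ``bad'' loci simultaneously. The perturbation will automatically lie in $(\pa(\cap_i \Fix T_i))^{\perp}$ and therefore preserve the projection onto $\cap_i \Fix T_i$, while the dodging will handle the condition $y_x \notin \Fix T_i$.

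First, I would invoke \cref{lemma:affineTOlinear} to reduce to the case $z = 0$ and each $T_i$ linear, so that $\Fix T_i$ is a closed linear subspace and $F := \cap_{i=1}^m \Fix T_i$ is a closed linear subspace. The hypothesis $(\Fix T_i)^{\perp} \neq \{0\}$ lets me pick $u_i \in (\Fix T_i)^{\perp} \smallsetminus \{0\}$ for each $i$; note $u_i \in F^{\perp}$ since $F \subseteq \Fix T_i$. Define $y \colon \mathbb{R}^m \to \mathcal{H}$ by $y(s) := x + \sum_{j=1}^m s_j u_j$, and for every $i \in \I$ set
\[
S_i := \{ s \in \mathbb{R}^m : y(s) \in \Fix T_i \} = \bigl\{ s \in \mathbb{R}^m : \Pro_{(\Fix T_i)^{\perp}}(x) + \textstyle\sum_{j=1}^m s_j \Pro_{(\Fix T_i)^{\perp}}(u_j) = 0 \bigr\},
\]
which is a closed affine subspace of $\mathbb{R}^m$.

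The crucial step is the claim that $S_i \subsetneq \mathbb{R}^m$ for every $i$. If instead $S_i = \mathbb{R}^m$, then $0 \in S_i$ forces $\Pro_{(\Fix T_i)^{\perp}}(x) = 0$, and then testing against the standard basis of $\mathbb{R}^m$ forces $\Pro_{(\Fix T_i)^{\perp}}(u_j) = 0$ for every $j$; in particular, $u_i = \Pro_{(\Fix T_i)^{\perp}}(u_i) = 0$, contradicting $u_i \neq 0$. Since each $S_i$ is a proper closed affine subspace of $\mathbb{R}^m$, the complement $\mathbb{R}^m \smallsetminus \cup_{i=1}^m S_i$ is open and dense, hence contains vectors $s$ of arbitrarily small norm. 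Picking such $s$ with $\|\sum_j s_j u_j\| \leq \epsilon$ and setting $y_x := y(s)$ yields $y_x \in \mathbf{B}[x;\epsilon]$, $y_x \notin \Fix T_i$ for every $i \in \I$, and $y_x - x = \sum_j s_j u_j \in F^{\perp}$, whence $\Pro_F y_x = \Pro_F x + \Pro_F(y_x - x) = \Pro_F x$. Shifting back through \cref{lemma:affineTOlinear} gives the desired identity in the original (un-translated) setting, and the sequence claim follows by taking $\epsilon = 1/k$ for $k \in \mathbb{N} \smallsetminus \{0\}$.

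The main obstacle is the properness claim $S_i \subsetneq \mathbb{R}^m$, which is exactly where the assumption $(\Fix T_i)^{\perp} \neq \{0\}$ is used; the choice of $u_i$ plays the dual role of supplying both a coordinate direction in $\mathbb{R}^m$ and a witness for the nontriviality of $\Pro_{(\Fix T_i)^{\perp}}(u_i)$. The rest of the argument reduces to the elementary fact that a finite union of proper closed affine subspaces of $\mathbb{R}^m$ has open, dense complement.
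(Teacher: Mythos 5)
Your proposal is correct, but it takes a genuinely different route from the paper. After the common reduction to linear isometries with $z=0$ (via \cref{lemma:affineTOlinear}), the paper builds $y_{x}$ by a recursive scheme: it locates the smallest index $j$ with $x\in\Fix T_{j}$, perturbs along a single direction $z_{j}\in(\Fix T_{j})^{\perp}\smallsetminus\{0\}$, and uses a continuity argument (\cref{lem:FixIsometryContinu}\cref{lem:FixIsometryContinu:NotIn}) to choose the step size small enough that the constraints already satisfied for $i<j$ are not destroyed, then repeats, accumulating a total displacement of at most $\epsilon$ over at most $m$ stages. You instead perturb simultaneously along all $m$ directions, $y(s)=x+\sum_{j}s_{j}u_{j}$, observe that each bad set $S_{i}$ is (empty or) a closed affine subset of $\mathbb{R}^{m}$ which is proper because $\Pro_{(\Fix T_{i})^{\perp}}u_{i}=u_{i}\neq 0$, and conclude by the genericity fact that a finite union of proper closed affine subspaces of $\mathbb{R}^{m}$ has open dense complement; the projection identity then follows exactly as in the paper since every $u_{j}$ lies in $\left(\cap_{i}\Fix T_{i}\right)^{\perp}$. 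Your argument is shorter, avoids the recursive bookkeeping and the auxiliary continuity lemma, and makes transparent that the set of admissible perturbation parameters is generic; the paper's recursion, on the other hand, yields an explicit step-by-step procedure for producing $y_{x}$ (which is what the paper advertises as a ``clear procedure to find practical initial points''), and its one-direction-at-a-time structure is what carries over to the composed-operator variant in \cref{prop:Rn:CCSP:Condition}, where your simultaneous perturbation would need to be adapted because the condition $T_{i-1}\cdots T_{1}y\notin\Fix T_{i}$ is no longer an affine condition in a single common coordinate system without inserting the adjoints as the paper does. One cosmetic point: when $S_{i}=\varnothing$ it is not an affine subspace under the paper's convention, so you should phrase the genericity step as ``each $S_{i}$ is a closed set with empty interior,'' which is all the union argument needs.
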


\begin{proof}
	Let  $\epsilon \in \mathbb{R}_{++}$.		Because $(\forall i \in \I)$ $T_{i}: \mathcal{H}  \to \mathcal{H} $ is    isometric, using \cite[Proposition~3.4]{BOyW2019LinearConvergence}, we know that  $(\forall i \in \I)$ $T_{i}: \mathcal{H}  \to \mathcal{H} $  is affine.
		
  Define $(\forall i \in \I)$ $(\forall x \in \mathcal{H})$  $F_{i}x := T_{i} (x+z) -z$. Then by \cref{lemma:affineTOlinear}\cref{lemma:affineTOlinear:F}, $(\forall i \in \I)$  $F_{i}$ is linear, $\Fix F_{i} =\Fix T_{i} -z$, and 	$\cap^{m}_{i=1} \Fix F_{i} =\cap^{m}_{i=1}  \Fix T_{i}   -z$. 
 Moreover, bearing \cref{fac:SetChangeProje} in mind, we notice that there exists $y_{x} \in \mathbf{B}[x;\epsilon]$ such that $ \Pro_{\cap^{m}_{i=1} \Fix T_{i}}x  = \Pro_{\cap^{m}_{i=1} \Fix T_{i}} y_{x}$ and 
	 $(\forall i \in \I )$ $y_{x} \notin \Fix T_{i}$ if and only if   there exists $z_{x}:=y_{x}-z \in \mathbf{B}[x-z;\epsilon]$ such that $ \Pro_{\cap^{m}_{i=1} \Fix F_{i}}(x -z) = \Pro_{\cap^{m}_{i=1} \Fix F_{i}} (z_{x})$ and 
	 $(\forall i \in \I)$ $z_{x} \notin \Fix F_{i}$.	 
Hence, without loss of generality, we assume that $(\forall i \in \I)$ $T_{i}: \mathcal{H}  \to \mathcal{H} $ is linearly isometric and $z =0$.

	Suppose $m=1$. If $x \notin \Fix T_{1}$,  then  $y_{x}  =x$. If $x \in \Fix T_{1}$, then take $w \in (\Fix T_{1})^{\perp} \smallsetminus \{0\}$, $y_{x}=x+\frac{\epsilon}{\norm{w}}w$. Because $w \in (\Fix T_{1})^{\perp} $,  $\Fix T_{1}$ is   linear, and $\Pro_{  \Fix T_{1}}$ is   linear, we know that in both cases,  $y_{x} \in \mathbf{B}[x;\epsilon]$, $ \Pro_{  \Fix T_{1}}x  = \Pro_{  \Fix T_{1}} y_{x}$ and 
	  $y_{x} \notin \Fix T_{1}$.

	In the rest of the proof, we assume that $m \geq 2$.	
	If $(\forall i \in \I)$ $ x \notin \Fix T_{i}$, then the proof is done with $y_{x} =x$. Otherwise, assume that $j $ is the smallest index in $ \I:=\{1, \ldots, m\} $ such that $x \in  \Fix T_{j} $.  
	
	\textbf{Step~1:} Take $z_{j} \in (\Fix T_{j})^{\perp} \smallsetminus \{0\}$.
	Because $x \in \Fix T_{j} $,  $z_{j} \notin \Fix T_{j} $ and $\Fix T_{j}$ is a linear subspace, we know  that $(\forall t \in \mathbb{R} \smallsetminus \{0\})$ $x + t z_{j} \notin \Fix T_{j} $. 
	
	In addition, for every $i \in \{1, \ldots, j-1\}$, because $x \notin \Fix T_{i}$,   by applying \cref{lem:FixIsometryContinu}\cref{lem:FixIsometryContinu:NotIn} with $F=\Id, T=T_{i}$, and $z=z_{j}$, we have that there exists $\overline{t}_{i} \in \mathbb{R}_{++}$ such that  $(\forall t \in  [-\overline{t}_{i}, \overline{t}_{i}])$ $x + t z_{j} \notin \Fix T_{i}$. Set $\alpha_{j} := \min_{1 \leq i \leq j-1} \overline{t}_{i}$  and take $y_{j} := x + \min \{ \frac{\epsilon}{\norm{z_{j}}m }, \alpha_{j}   \}z_{j}$.  Then we obtain that
	$(\forall i \in \{1, \ldots, j\})$   $y_{j} \notin \Fix T_{i}$,  and $\norm{y_{j} -x} \leq \frac{\epsilon}{m}$. 
	
	Furthermore, using the linearity of $\Pro_{\cap^{m}_{i=1} \Fix T_{i}}$, $z_{j} \in (\Fix T_{j})^{\perp} \subseteq (\cap^{m}_{i=1} \Fix T_{i} )^{\perp}$, and   \cite[Theorems~5.8(4)]{D2012}, we have that  $ \Pro_{\cap^{m}_{i=1} \Fix T_{i}} y_{j} =  \Pro_{\cap^{m}_{i=1} \Fix T_{i}} x +  \min \{ \frac{\epsilon}{\norm{z_{j}}m }, \alpha_{j}   \} \Pro_{\cap^{m}_{i=1} \Fix T_{i}}  z_{j} =  \Pro_{\cap^{m}_{i=1} \Fix T_{i} } x $ .
	
	\textbf{Recursive Steps:} If $j=m$, we go to Step~$m-j +2$. Otherwise, for $k =j, \ldots, m-1  $ successively,  if $y_{k} \notin \Fix T_{k+1}$, then take $y_{k+1} =y_{k} $. Otherwise, we repeat Step~1 by substituting $x =y_{k}$, $j=k+1$ to obtain $y_{k+1}$ such that 	$(\forall i \in \{1, \ldots, k+1\})$   $y_{k+1} \notin \Fix T_{i}$,    $\norm{y_{k+1} -y_{k}} \leq \frac{\epsilon}{m}$, and $ \Pro_{\cap^{m}_{i=1} \Fix T_{i}} y_{k+1} =   \Pro_{\cap^{m}_{i=1} \Fix T_{i}} y_{k} =  \Pro_{\cap^{m}_{i=1} \Fix T_{i} } x $.

\textbf{Step~$\mathbf{m-j +2}$:} Note that from our Step~$1$ and Recursive Steps,  we got that  $(\forall k \in \{ j, \ldots, m\})$ $y_{k}$ satisfying  $\norm{y_{k} -y_{k-1}} \leq \frac{\epsilon}{m}$, where $y_{j-1}=x$. Moreover, in our Recursive Steps with $k=m-1$, we obtained $y_{m}$ such that $(\forall i \in \I)$   $y_{m} \notin \Fix T_{i}$,   and $ \Pro_{\cap^{m}_{i=1} \Fix T_{i}} y_{m} =     \Pro_{\cap^{m}_{i=1} \Fix T_{i} } x $.

Now take $y_{x}  =y_{m}$. Then $  \Pro_{\cap^{m}_{i=1} \Fix T_{i}} y_{x}=\Pro_{\cap^{m}_{i=1} \Fix T_{i}}x  $ and 
$(\forall i \in \I)$ $y_{x} \notin \Fix T_{i}$. Moreover, 
\begin{align*}
\norm{y_{x}-x} =\norm{y_{m}-x} \leq \norm{y_{m} -y_{j}} +\norm{y_{j}-x} \leq \norm{y_{j}-x}  + \sum^{m-1}_{k=j} \norm{y_{k+1}-y_{k}} \leq  \frac{\epsilon}{m} + (m-j) \frac{\epsilon}{m} \leq  \epsilon.
 \end{align*} 
Altogether, the proof is complete. 
\end{proof}

\begin{proposition} \label{prop:Rn:CCSP:Condition}
	Suppose that $\mathcal{H} =\mathbb{R}^{n}$.	Let $(\forall i \in \I)$ $T_{i} : \mathcal{H} \to \mathcal{H}$ be isometric such that $z \in \cap^{m}_{i=1} \Fix T_{i} \neq \varnothing $ and $(\forall i \in \I)$ $(\Fix T_{i} -z)^{\perp} \neq \{0\}$. 	Set $ \mathcal{S} := \{ \Id,  T_{1}, T_{2}T_{1},  \ldots, T_{m}\cdots T_{2}T_{1} \}  $.
	Let $x \in \mathcal{H}$. Then  for every $\epsilon \in \mathbb{R}_{++}$,  there exists $y_{x} \in \mathbf{B}[x;\epsilon]$ such that $ \Pro_{\cap^{m}_{i=1} \Fix T_{i}}x  = \Pro_{\cap^{m}_{i=1} \Fix T_{i}} y_{x}$ and 
	$(\forall i \in \I)$ $T_{i-1}\cdots T_{1} y_{x} \notin \Fix T_{i}$.
	Consequently, there exists a sequence $(x_{k} )_{k \in \mathbb{N}}$ in $\mathcal{H}$ such that $ \lim_{k \to \infty} x_{k} =x$, $ (\forall k \in \mathbb{N})$ $ \Pro_{\cap^{m}_{i=1} \Fix T_{i}}x  = \Pro_{\cap^{m}_{i=1} \Fix T_{i}} x_{k}$, and $(\forall i \in \I)$ $T_{i-1}\cdots T_{1} x_{k} \notin \Fix T_{i}$.  
\end{proposition}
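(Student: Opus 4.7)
My plan follows the same blueprint as \cref{prop:Rn:CCSTmT1:Condition}, but with perturbation directions chosen to respect the composition structure $T_{i-1}\cdots T_{1}$, which is the reason $\mathcal{H}=\mathbb{R}^{n}$ is needed (so that \cref{lem:FixIsometryContinu}\cref{lem:FixIsometryContinu:In} applies). First I would reduce to the case where each $T_{i}$ is a \emph{linear} isometry and $z=0$. Define $F_{i}x:=T_{i}(x+z)-z$. A straightforward induction gives $F_{i-1}\cdots F_{1}y = T_{i-1}\cdots T_{1}(y+z)-z$, so $F_{i-1}\cdots F_{1}y\notin \Fix F_{i}$ is equivalent to $T_{i-1}\cdots T_{1}(y+z)\notin \Fix T_{i}$. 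Combined with \cref{lemma:affineTOlinear}\cref{lemma:affineTOlinear:F}, \cref{fac:SetChangeProje}, and the fact that each $T_{i}$ is affine by \cite[Proposition~3.4]{BOyW2019LinearConvergence}, this reduction is legitimate.

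Once $T_{1},\ldots,T_{m}$ are linear isometries of $\mathbb{R}^{n}$ and $z=0$, each $F_{i}:=T_{i-1}\cdots T_{1}$ is a linear isometry too, hence invertible with $F_{i}F_{i}^{*}=\Id$. I would construct $y_{x}$ inductively. Starting from $y_{0}:=x$, at step $i\in\I$: if $F_{i}y_{i-1}\notin \Fix T_{i}$, set $y_{i}:=y_{i-1}$; otherwise pick any $w_{i}\in(\Fix T_{i})^{\perp}\smallsetminus\{0\}$ (available because $(\Fix T_{i})^{\perp}\neq\{0\}$) and set $v_{i}:=F_{i}^{*}w_{i}\neq 0$, and perturb $y_{i}:=y_{i-1}+t_{i}v_{i}$ for a small $t_{i}\neq 0$ to be chosen. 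By \cref{lem:FixIsometryContinu}\cref{lem:FixIsometryContinu:In}, $F_{i}y_{i}=F_{i}y_{i-1}+t_{i}w_{i}\notin \Fix T_{i}$ for every $t_{i}\neq 0$. The already-established conditions $F_{j}y_{i-1}\notin \Fix T_{j}$ for $j<i$ are preserved for all sufficiently small $t_{i}$ by \cref{lem:FixIsometryContinu}\cref{lem:FixIsometryContinu:NotIn} applied with $F=F_{j}$ and $z=v_{i}$, yielding a threshold $\bar{t}_{j}>0$.

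The key check is that each perturbation preserves the projection onto $\cap_{i\in\I}\Fix T_{i}$. For any $w\in\cap_{j\in\I}\Fix T_{j}$, one has $F_{i}w=T_{i-1}\cdots T_{1}w=w$, so
\begin{equation*}
\innp{v_{i},w}=\innp{F_{i}^{*}w_{i},w}=\innp{w_{i},F_{i}w}=\innp{w_{i},w}=0,
\end{equation*}
since $w_{i}\in(\Fix T_{i})^{\perp}$ and $w\in\Fix T_{i}$. Hence $v_{i}\in\left(\cap_{j\in\I}\Fix T_{j}\right)^{\perp}$, and by the linearity of $\Pro_{\cap_{j\in\I}\Fix T_{j}}$ together with \cite[Theorem~5.8(4)]{D2012}, the projection is unchanged by adding $t_{i}v_{i}$.

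Finally I would choose $t_{i}:=\min\{\bar{t}_{1},\ldots,\bar{t}_{i-1},\,\epsilon/(m\,\norm{v_{i}})\}$ so that $\norm{y_{i}-y_{i-1}}\leq\epsilon/m$, and take $y_{x}:=y_{m}$, giving $\norm{y_{x}-x}\leq\epsilon$, $\Pro_{\cap_{i\in\I}\Fix T_{i}}y_{x}=\Pro_{\cap_{i\in\I}\Fix T_{i}}x$, and $T_{i-1}\cdots T_{1}y_{x}\notin\Fix T_{i}$ for every $i\in\I$. The sequence $(x_{k})_{k\in\mathbb{N}}$ is obtained by applying this construction with $\epsilon=1/(k+1)$. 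The main obstacle is the bookkeeping to preserve earlier non-fixity conditions while guaranteeing the new one; this is precisely what the finite-dimensional identity $F_{i}F_{i}^{*}=\Id$ buys us through \cref{lem:FixIsometryContinu}\cref{lem:FixIsometryContinu:In}, and is the reason the ambient space must be $\mathbb{R}^{n}$.
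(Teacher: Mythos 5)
Your proposal is correct and follows essentially the same route as the paper: reduce to linear isometries with $z=0$, perturb along $T_{1}^{*}\cdots T_{i-1}^{*}w_{i}$ with $w_{i}\in(\Fix T_{i})^{\perp}\smallsetminus\{0\}$, and use \cref{lem:FixIsometryContinu}\cref{lem:FixIsometryContinu:In} (where $FF^{*}=\Id$ requires $\mathbb{R}^{n}$) to break the new fixed-point relation while \cref{lem:FixIsometryContinu}\cref{lem:FixIsometryContinu:NotIn} preserves the earlier ones. The only cosmetic difference is that you verify $F_{i}^{*}w_{i}\in\bigl(\cap_{j\in\I}\Fix T_{j}\bigr)^{\perp}$ by a direct adjoint computation, whereas the paper invokes the identity $\Pro_{\cap_{t}\Fix T_{t}}T_{1}^{*}\cdots T_{j-1}^{*}=\Pro_{\cap_{t}\Fix T_{t}}$ from the cited references; both establish the same projection invariance.
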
	

\begin{proof}
	Similarly with what we explained in the proof of \cref{prop:Rn:CCSTmT1:Condition}, without loss of generality, we assume that $(\forall i \in \I)$ $T_{i}: \mathcal{H}  \to \mathcal{H} $ is  a linear isometry and $z=0$.

	If $m=1$, then $\mathcal{S} =\{\Id, T_{1} \}$. Hence, the required result was proved in   \cref{prop:Rn:CCSTmT1:Condition}.
	
	Suppose that $m \geq 2$.
	If $(\forall i \in\I)$ $T_{i-1}\cdots T_{1} x \notin \Fix T_{i}$, then the proof is done with $y_{x} =x$. Otherwise, assume that $j $ is the smallest index in $ \I:=\{1, \ldots, m\} $ such that $T_{j-1}\cdots T_{1} x \in  \Fix T_{j} $.  
	
	\textbf{Step~1:} Take $z_{j} \in (\Fix T_{j})^{\perp} \smallsetminus \{0\}$.
	Because $\mathcal{H} =\mathbb{R}^{n}$, and $T_{j-1}\cdots T_{1} x \in \Fix T_{j} $, then applying \cref{lem:FixIsometryContinu}\cref{lem:FixIsometryContinu:In} with  $F=T_{j-1}\cdots T_{1}$, $T=T_{j}$ and $z=z_{j}$,  we have that $(\forall t \in \mathbb{R} \smallsetminus \{0\})$ $ T_{j-1}\cdots T_{1}(x + t T^{*}_{1}T^{*}_{2} \cdots T^{*}_{j-1} z_{j}) =T_{j-1}\cdots T_{1}x +t z_{j} \notin \Fix T_{j} $. 
	
	In addition, since $(\forall i \in \{1, \ldots, j-1\})$ $T_{i-1}\cdots T_{1} x \notin \Fix T_{i}$, thus  applying \cref{lem:FixIsometryContinu}\cref{lem:FixIsometryContinu:NotIn} with $F=T_{i-1}\cdots T_{1} , T=T_{i}$,  and $z=T^{*}_{1}T^{*}_{2} \cdots T^{*}_{j-1} z_{j}$, we have that there exists $\overline{t}_{i} \in \mathbb{R}_{++}$ such that  $(\forall t \in  [-\overline{t}_{i}, \overline{t}_{i}])$ $T_{i-1}\cdots T_{1}(x + t T^{*}_{1}T^{*}_{2} \cdots T^{*}_{j-1} z_{j}) \notin \Fix T_{i}$. 
	
	Set $\alpha_{j} := \min_{1 \leq i \leq j-1} \overline{t}_{i}$  and take $y_{j} := x + \min \{ \frac{\epsilon}{ m \norm{  z_{j}}}, \alpha_{j}   \}T^{*}_{1}T^{*}_{2} \cdots T^{*}_{j-1} z_{j}$. Then we obtain that
	$(\forall i \in \{1, \ldots, j\})$    $T_{i-1}\cdots T_{1}y_{j} \notin \Fix T_{i}$, and 
	$\norm{ y_{j} - x } \leq \frac{\epsilon}{m}$, since $(\forall i \in \I)$ $T_{i}$ is isometric implies that $(\forall i \in \I)$ $\norm{T^{*}_{i}}=\norm{T_{i}} \leq 1 $. 
Furthermore, because $(\forall i \in \I)$ $T_{i}$ is linearly isometric and $\cap^{m}_{k=1} \Fix T^{*}_{k} \subseteq \Fix T^{*}_{i} $, by  \cite[Lemma~2.1]{BDHP2003}  and  \cite[Lemma~3.7]{BOyW2019LinearConvergence}, 
	\begin{align} \label{eq:lem:Rn:CCSP:Condition}
	\Pro_{\cap^{m}_{t=1} \Fix T_{t}}  T^{*}_{1}T^{*}_{2} \cdots T^{*}_{j-1} =\Pro_{\cap^{m}_{t=1} \Fix T^{*}_{t}}  T^{*}_{1}T^{*}_{2} \cdots T^{*}_{j-1} =\Pro_{\cap^{m}_{t=1} \Fix T^{*}_{t}}=\Pro_{\cap^{m}_{t=1} \Fix T_{t}}.
	\end{align}
Employing $\Pro_{\cap^{m}_{i=1} \Fix T_{i}}$,  $z_{j} \in (\Fix T_{j})^{\perp} \subseteq (\cap^{m}_{i=1} \Fix T_{i} )^{\perp}$, and   \cite[Theorems~5.8(4)]{D2012},  we establish that 
	\begin{align*}
	\Pro_{\cap^{m}_{i=1} \Fix T_{i}} y_{j} &~ =~  \Pro_{\cap^{m}_{i=1} \Fix T_{i}} x +  \min \{ \frac{\epsilon}{ m \norm{  z_{j}}}, \alpha_{j}   \} \Pro_{\cap^{m}_{i=1} \Fix T_{i}}  T^{*}_{1}T^{*}_{2} \cdots T^{*}_{j-1} z_{j} \\
	& \stackrel{\cref{eq:lem:Rn:CCSP:Condition}}{=}  \Pro_{\cap^{m}_{i=1} \Fix T_{i}} x +  \min \{ \frac{\epsilon}{ m \norm{   z_{j}}}, \alpha_{j}   \} \Pro_{\cap^{m}_{i=1} \Fix T_{i}} z_{j} =\Pro_{\cap^{m}_{i=1} \Fix T_{i}} x.
	\end{align*}  
	
		\textbf{Recursive Steps:} If $j=m$, we go to Step~$m-j +2$. Otherwise, for $k =j, \ldots, m-1  $ successively,  if $T_{k}\cdots T_{1}y_{k} \notin \Fix T_{k+1}$, then take $y_{k+1} =y_{k} $. Otherwise, we repeat Step~1 by replacing $x =y_{k}$ and  $j=k+1$ to obtain $y_{k+1}$ such that 	$(\forall i \in \{1, \ldots, k+1\})$   $T_{i-1}\cdots T_{1}y_{k+1} \notin \Fix T_{i}$,    $\norm{y_{k+1} -y_{k}} \leq \frac{\epsilon}{m}$, and $ \Pro_{\cap^{m}_{i=1} \Fix T_{i}} y_{k+1} =   \Pro_{\cap^{m}_{i=1} \Fix T_{i}} y_{k} =  \Pro_{\cap^{m}_{i=1} \Fix T_{i} } x $.

	\textbf{Step~$\mathbf{m-j+2}$:}  This is almost the same  as the Step~$ m-j+2$ in the proof of \cref{prop:Rn:CCSTmT1:Condition}.
Altogether, the proof is complete.
\end{proof}

\section{Finite convergence of CRMs associated with  hyperplanes and halfspaces} \label{sec:CRM:hyperplanes}
Throughout this section,  
for every $ i \in \I := \{1, \ldots, m \}$,  let $u_{i}$  be in $\mathcal{H}$,  let $\eta_{i}$ be in $\mathbb{R}$, and set 
\begin{empheq}[box=\mybluebox]{equation} \label{eq:WH}
W_{i} := \{x \in \mathcal{H} ~:~ \innp{x,u_{i}} \leq \eta_{i} \} \quad \text{and} \quad H_{i} := \{x \in \mathcal{H} ~:~ \innp{x,u_{i}} = \eta_{i} \}.
\end{empheq}

In this section, we investigate the finite convergence of CRMs induced by sets of reflectors associated with halfspaces and hyperplanes for solving the related best approximation or feasibility problems.

According to \cite[Theorems~4.17 and 6.5]{Oy2020ProjectionHH},
if $u_{1}$ and $u_{2}$ are linearly dependent  or  orthogonal, then $\Pro_{ W_{2} }\Pro_{ W_{1} }=\Pro_{ W_{1} \cap W_{2}  }$ and  $\Pro_{ W_{2} }\Pro_{ H_{1} }=\Pro_{ H_{1} }\Pro_{ W_{2} }=\Pro_{ H_{1} \cap W_{2}  }$; moreover, if $u_{1}$ and $u_{2}$ are linearly independent, we can always find corresponding sequences of iterations of compositions of projections onto halfspaces and hyperplanes with initial points $x \in \mathcal{H}$ converging linearly to $\Pro_{ W_{1} \cap W_{2}  }x$ or $\Pro_{ H_{1} \cap W_{2}  }x$.

For completeness, we shall explore the  performance of CRMs associated with  hyperplanes and halfspaces  in all cases.

\subsection*{CRMs associated with  hyperplanes}
Note that the following \cref{cor:Tn:CCSP:R}\cref{item:cor:Tn:CCSP:R:S2} is first proved  in \cite[Lemma~3]{BCS2019} when $\mathcal{H} =\mathbb{R}^{n}$.
\begin{corollary} \label{cor:Tn:CCSP:R}
	Assume that $ \cap^{m}_{i =1} H_{i} \neq \varnothing$.  Let $x \in \mathcal{H}$.  Then the following statements hold. 
	\begin{enumerate}
		\item \label{item:cor:Tn:CCSP:R:S1} Set $\mathcal{S}:=\{\Id, \R_{H_{1}}, \ldots, \R_{H_{m}} \}$. If $x \notin \cup^{m}_{i=1} H_{i}$, then $\CC{\mathcal{S}}x =  \Pro_{\cap^{m}_{i=1}  H_{i}}x$.
		\item \label{item:cor:Tn:CCSP:R:S2}   Set $\mathcal{S}:=\{\Id, \R_{H_{1}}, \R_{H_{2}}\R_{H_{1}}, \ldots, \R_{H_{m}}\cdots\R_{H_{2}}\R_{H_{1}} \}$. If $(\forall i \in \I)$ $\R_{H_{i-1}}\cdots\R_{H_{1}} x \notin H_{i} $, then $\CC{\mathcal{S}}x = \Pro_{\cap^{m}_{i=1}  H_{i}}x$.
	\end{enumerate}
\end{corollary}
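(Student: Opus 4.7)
The plan is to reduce both parts to \cref{prop:Tn:CCSP}. For every $i \in \I$, the set $H_{i}$ is a closed affine hyperplane (it is nonempty since $\cap_{j \in \I} H_{j} \neq \varnothing$), hence $\R_{H_{i}}$ is an isometry of $\mathcal{H}$ (see e.g.\ \cite[Lemma~2.23(i)]{BOyW2019Isometry} cited in \cref{cor:CCSHU}), and $\Fix \R_{H_{i}} = H_{i}$. Fixing any $z \in \cap_{i \in \I} H_{i}$, the linear subspace $H_{i} - z = \{y \in \mathcal{H} \,:\, \innp{y,u_{i}}=0\} = \ker u_{i}$ has orthogonal complement $\spn\{u_{i}\}$, which is one-dimensional. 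Therefore the hypotheses on the isometries $T_{i} := \R_{H_{i}}$ in \cref{prop:Tn:CCSP} are all met.

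For \cref{item:cor:Tn:CCSP:R:S1}, I would simply note that $x \notin \cup_{i \in \I} H_{i}$ is the condition $(\forall i \in \I)$ $x \notin \Fix \R_{H_{i}}$, so we are exactly in the setting of \cref{prop:Tn:CCSP}\cref{prop:Tn:CCSP:T1Tm} with $T_{i} = \R_{H_{i}}$ and $\mathcal{S} = \{\Id, \R_{H_{1}}, \ldots, \R_{H_{m}}\}$. Invoking that theorem gives $\CC{\mathcal{S}}x = \Pro_{\cap^{m}_{i=1} \Fix \R_{H_{i}}} x = \Pro_{\cap^{m}_{i=1} H_{i}} x$.

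For \cref{item:cor:Tn:CCSP:R:S2}, the hypothesis $\R_{H_{i-1}} \cdots \R_{H_{1}} x \notin H_{i}$ is $(\forall i \in \I)$ $\R_{H_{i-1}} \cdots \R_{H_{1}} x \notin \Fix \R_{H_{i}}$, which is precisely the hypothesis in \cref{prop:Tn:CCSP}\cref{prop:Tn:CCSP:T12mTm} for $T_{i} = \R_{H_{i}}$ and $\mathcal{S} = \{\Id, \R_{H_{1}}, \R_{H_{2}}\R_{H_{1}}, \ldots, \R_{H_{m}}\cdots \R_{H_{1}}\}$. Again \cref{prop:Tn:CCSP} yields $\CC{\mathcal{S}}x = \Pro_{\cap^{m}_{i=1} H_{i}} x$, completing the proof.

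There is essentially no obstacle here; the only thing to be a bit careful with is the identification of the conditions in the corollary with those in \cref{prop:Tn:CCSP}, and the verification that $(H_{i}-z)^{\perp}$ is one-dimensional (which is immediate from the fact that $H_{i}$ is a hyperplane defined by a single non-trivial linear equation). Everything else is a direct citation.
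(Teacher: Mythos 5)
Your proposal is correct and follows essentially the same route as the paper's proof: both verify that each $\R_{H_{i}}$ is an isometry with $\Fix \R_{H_{i}} = H_{i}$ and that $(H_{i}-z)^{\perp} = \spn\{u_{i}\}$ is one-dimensional, then invoke \cref{prop:Tn:CCSP}\cref{prop:Tn:CCSP:T1Tm} and \cref{prop:Tn:CCSP:T12mTm} with $T_{i}=\R_{H_{i}}$. No gaps.
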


\begin{proof}
	By \cite[Lemmas~2.37]{BOyW2019Isometry}, $(\forall i \in \I)$  $\R_{H_{i}}$ is an affine isometry with $\Fix \R_{H_{i}} =H_{i} $. 
Let  $z \in \cap_{i \in \I} H_{i}$. Note that 
	$(\forall i \in \I) $ $ \dim \left( (\Fix \R_{H_{i}} )^{\perp} -z \right) =\dim \left( ( H_{i} -z)^{\perp}  \right) =\dim \left( ( \ker u_{i})^{\perp}  \right) =\dim \left( \spn \{ u_{i} \} \right) = 1$. 
	
	Therefore, the required results follow from  \cref{prop:Tn:CCSP}
with $T_{1} =\R_{H_{1}}, T_{2} =\R_{H_{2}}, \ldots, T_{m} =\R_{H_{m}}$.
\end{proof}

\cref{them:Tn:CCSP:R:S1} below illustrates that  the CRM induced by $\mathcal{S} :=\{\Id, \R_{H_{1}}, \R_{H_{2}}\}$  generally can not find $\Pro_{H_{1} \cap H_{2}}x$ in finite steps. In particular, if  $x \in (H_{1} \cap H_{2}) \cup ( H^{c}_{1} \cap H^{c}_{2})$, then  the CRM converges in one step; otherwise, the classical method of alternating projections becomes a subsequence of  the CRM.

\begin{theorem} \label{them:Tn:CCSP:R:S1}
Assume	 that $H_{1} \cap H_{2} \neq \varnothing$.
	Set $\mathcal{S} :=\{\Id, \R_{H_{1}}, \R_{H_{2}}\}$. Let $x \in \mathcal{H}$. Then exactly one of the following cases occurs:
	\begin{enumerate}
		\item \label{item:them:Tn:CCSP:R:S1:eq} $x \in (H_{1} \cap H_{2}) \cup ( H^{c}_{1} \cap H^{c}_{2})$. Then $\CC{\mathcal{S}}x = \Pro_{H_{1} \cap H_{2}}x$.
		\item   \label{item:them:Tn:CCSP:R:S1:PH1H2} $x \in H_{1} \cap H^{c}_{2} $. Then $(\forall k \in \mathbb{N})$   $\CC{\mathcal{S}}^{2k}x =(\Pro_{H_{1}} \Pro_{H_{2}})^{k} x$ and $\CC{\mathcal{S}}^{2k+1}x =\Pro_{H_{2}}(\Pro_{H_{1}} \Pro_{H_{2}})^{k} x$.
		\item  \label{item:them:Tn:CCSP:R:S1:PH2H1} $x \in  H^{c}_{1} \cap H_{2} $. Then $(\forall k \in \mathbb{N})$   $\CC{\mathcal{S}}^{2k}x =(\Pro_{H_{2}} \Pro_{H_{1}})^{k} x$ and $\CC{\mathcal{S}}^{2k+1}x =\Pro_{H_{1}} (\Pro_{H_{2}} \Pro_{H_{1}})^{k} x$. 
	\end{enumerate}
\end{theorem}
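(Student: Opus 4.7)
The plan is to dispatch the three disjoint cases according to which of the hyperplanes $H_1, H_2$ contain $x$, and in the non-trivial cases \cref{item:them:Tn:CCSP:R:S1:PH1H2} and \cref{item:them:Tn:CCSP:R:S1:PH2H1} to run a short induction on $k$ exploiting that membership in $H_i$ makes $\R_{H_i}$ act as the identity and thereby collapses $\mathcal{S}(y)$ to at most two distinct points.

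Case \cref{item:them:Tn:CCSP:R:S1:eq} is immediate. When $x \in H_1 \cap H_2$ both reflectors fix $x$, so $\mathcal{S}(x) = \{x\}$ and \cref{thm:SymForm}\cref{thm:SymForm:1} gives $\CC{\mathcal{S}}x = x = \Pro_{H_1 \cap H_2} x$. When $x \in H_1^c \cap H_2^c$ the identity is a direct application of \cref{cor:Tn:CCSP:R}\cref{item:cor:Tn:CCSP:R:S1} with $m = 2$.

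The engine driving the remaining two cases is the following two-point circumcenter computation: if $y \in H_1 \cap H_2^c$, then $\R_{H_1}y = y$ while $\R_{H_2}y \neq y$, so $\mathcal{S}(y) = \{y, \R_{H_2}y\}$ has cardinality two, and \cref{thm:SymForm}\cref{thm:SymForm:2} together with $\R_{H_2} = 2\Pro_{H_2} - \Id$ yields $\CC{\mathcal{S}}y = (y + \R_{H_2}y)/2 = \Pro_{H_2}y$; symmetrically, if $y \in H_1^c \cap H_2$ then $\CC{\mathcal{S}}y = \Pro_{H_1}y$. With this in hand, I would prove case \cref{item:them:Tn:CCSP:R:S1:PH1H2} by induction on $k$, maintaining the invariant that $\CC{\mathcal{S}}^{2k}x = (\Pro_{H_1}\Pro_{H_2})^k x \in H_1$ (which holds for $k = 0$ because $x \in H_1$, and for $k \geq 1$ because the outermost factor is $\Pro_{H_1}$). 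For the inductive step, set $y := \CC{\mathcal{S}}^{2k}x$; if $y \in H_1 \cap H_2$ then $y$ is fixed by both projections and by $\CC{\mathcal{S}}$, so all subsequent iterates equal $y$ and every stated identity holds trivially; otherwise $y \in H_1 \cap H_2^c$ and the two-point computation gives $\CC{\mathcal{S}}y = \Pro_{H_2}y$, after which applying the same dichotomy to $\Pro_{H_2}y \in H_2$ either stabilizes the sequence at a point of $H_1 \cap H_2$ or produces $\CC{\mathcal{S}}(\Pro_{H_2}y) = \Pro_{H_1}\Pro_{H_2}y$, closing the induction. Case \cref{item:them:Tn:CCSP:R:S1:PH2H1} is handled by verbatim the same argument after interchanging the roles of $H_1$ and $H_2$.

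No deep obstacle arises: the whole argument rests on the elementary two-point circumcenter formula and the observation that a hyperplane reflector acts as the identity exactly on its hyperplane. The only point requiring care is the bookkeeping when the iteration enters $H_1 \cap H_2$ at an intermediate step, which is handled automatically because both $\Pro_{H_1}$ and $\Pro_{H_2}$ fix every point of that intersection.
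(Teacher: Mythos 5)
Your proposal is correct and follows essentially the same route as the paper's proof: the same three-way case split, the same reduction of $\mathcal{S}(y)$ to a two-point set via $\Fix \R_{H_i}=H_i$, the two-point circumcenter formula from \cref{thm:SymForm}\cref{thm:SymForm:2} giving $\CC{\mathcal{S}}y=\Pro_{H_2}y$ (resp.\ $\Pro_{H_1}y$), and the same induction on $k$ with symmetry handling case \cref{item:them:Tn:CCSP:R:S1:PH2H1}. Your explicit treatment of an iterate landing in $H_1\cap H_2$ mid-induction is a small extra piece of bookkeeping the paper glosses over, but it changes nothing essential since $\tfrac{y+\R_{H_i}y}{2}=\Pro_{H_i}y$ holds in the degenerate case as well.
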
	

\begin{proof}
	\cref{item:them:Tn:CCSP:R:S1:eq}: 
	If $x \in H_{1} \cap H_{2} $, then $\R_{H_{1}}x=x$, $\R_{H_{2}}x=x$,  $\Pro_{H_{1} \cap H_{2}}x=x$, and 
	$\mathcal{S}(x) =\{ x, \R_{H_{1}}x, \R_{H_{2}}x\} =\{x\}$. Hence, by \cref{thm:SymForm}\cref{thm:SymForm:1}, $\CC{\mathcal{S}} x =\CCO{(\mathcal{S}(x) ) } =\CCO{(\{x\})}=x = \Pro_{H_{1} \cap H_{2}}x$. 
	
	If $x \in H^{c}_{1} \cap H^{c}_{2}$, that is, $x \notin H_{1} \cup H_{2}$, then by  \cref{cor:Tn:CCSP:R}\cref{item:cor:Tn:CCSP:R:S1}, $\CC{\mathcal{S}} x =\Pro_{H_{1} \cap H_{2}}x$.
	
	\cref{item:them:Tn:CCSP:R:S1:PH1H2}: Assume that $x \in H_{1} \cap H^{c}_{2} $. We prove 	\cref{item:them:Tn:CCSP:R:S1:PH1H2} by induction on $k$.

	Clearly $x \in H_{1} \cap H^{c}_{2} $ yields  $\mathcal{S}(x) =\{ x, \R_{H_{1}}x, \R_{H_{2}}x\} =\{x, \R_{H_{2}}x\}$. By  \cref{thm:SymForm}\cref{thm:SymForm:2},   $\CC{\mathcal{S}} x =\CCO{(\mathcal{S}(x) ) }  =\frac{x+ \R_{H_{2}}x}{2} =\Pro_{H_{2}}x$.
	Hence, \cref{item:them:Tn:CCSP:R:S1:PH1H2} holds for $k=0$.  Assume that \cref{item:them:Tn:CCSP:R:S1:PH1H2} is true for some  $k \in \mathbb{N}$.
	
	Denote by $(\forall t \in \mathbb{N})$ $ x^{(t)} := \CC{\mathcal{S}}^{t}x$.
	By induction hypothesis, $x^{(2k+1)} = \CC{\mathcal{S}}^{2k+1}x =\Pro_{H_{2}}(\Pro_{H_{1}} \Pro_{H_{2}})^{k} x \in H_{2}$, so $\R_{H_{2}}  (x^{(2k+1)} ) = x^{(2k+1)} $ and $\mathcal{S} (x^{(2k+1)} )  =\{ x^{(2k+1)}, \R_{H_{1}} (x^{(2k+1)} )\}$.
	Then by \cref{thm:SymForm}\cref{thm:SymForm:2} and by induction hypothesis again,
	\begin{align} \label{eq:them:Tn:CCSP:R:S1}
	\CC{\mathcal{S}}^{2(k+1)} x =\CCO{(\mathcal{S}( x^{(2k+1)} )   ) } =\frac{x^{(2k+1)} +\R_{H_{1}} (x^{(2k+1)} )}{2}  =\Pro_{H_{1}} (x^{(2k+1)} ) =(\Pro_{H_{1}} \Pro_{H_{2}})^{k+1} x \in H_{1},
	\end{align}
	which implies that   $\R_{H_{1}} (x^{(2(k+1))} )= x^{(2(k+1))}  $  and $	\mathcal{S} (x^{(2(k+1))} ) =\{ x^{(2(k+1))}, \R_{H_{2}} (x^{(2(k+1))} )\}$.
	Furthermore, using \cref{thm:SymForm}\cref{thm:SymForm:2}, we obtain that
	\begin{align*}
	\CC{\mathcal{S}}^{2(k+1) +1} x=\CCO{(\mathcal{S}( x^{(2(k+1))} )   ) } 
	=\frac{x^{(2(k+1))} +\R_{H_{2}} (x^{(2(k+1))} )}{2}  =\Pro_{H_{2}} (x^{(2(k+1))} )\stackrel{\cref{eq:them:Tn:CCSP:R:S1}}{=} \Pro_{H_{2}}(\Pro_{H_{1}} \Pro_{H_{2}})^{k+1} x.
	\end{align*}
	Hence, the desired results hold for $k+1$ and so \cref{item:them:Tn:CCSP:R:S1:PH1H2} is true by induction.
	
	\cref{item:them:Tn:CCSP:R:S1:PH2H1}: Switch $H_{1}$ and $H_{2}$ in 	\cref{item:them:Tn:CCSP:R:S1:PH1H2} to obtain the required results. 
\end{proof}

In Example 1 of the first arXiv version of \cite{BCS2019}, the authors showed the idea that the CRM induced by $\mathcal{S} :=\{\Id, \R_{H_{1}}, \R_{H_{2}}\R_{H_{1}} \}$ needs at most three steps to find a point in $H_{1 } \cap H_{2}$  in $\mathbb{R}^{n}$.  This motivates the following \cref{them:Tn:CCSP:R:S2}.
The  main idea of the following proof comes from that example as well. Note that the proof of that Example didn't consider the Case~3 below, and that taking advantage of our  \cite[Proposition~4.4]{BOyW2019Isometry}, we actually discover that the CRM  solves the best approximation problem in at most $3$ steps.

\begin{theorem} \label{them:Tn:CCSP:R:S2} 
	Assume that $H_{1} \cap H_{2} \neq \varnothing$.
	Set $\mathcal{S} :=\{ \Id, \R_{H_{1}}, \R_{H_{2}}\R_{H_{1}} \}$. Then for every $x \in \mathcal{H}$, there exists $k \in \{1,2,3\}$ such that $\CC{\mathcal{S}}^{k}x = \Pro_{H_{1} \cap H_{2}}x$, that is, the circumcentered reflection method induced by $\mathcal{S}$ needs at most $3$ steps to find the best approximation point $\Pro_{H_{1} \cap H_{2}}x$.
\end{theorem}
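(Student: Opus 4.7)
The plan is to partition $\mathcal{H}$ into five disjoint cases according to the membership of $x$ in $H_{1}$, $H_{2}$, and the membership of $\R_{H_{1}}x$ in $H_{2}$, and to show in each case that $\CC{\mathcal{S}}^{k}x \in H_{1} \cap H_{2}$ for some $k \in \{1,2,3\}$. Once this is done, the target identity follows by combining two observations. First, since $H_{1}$ and $H_{2}$ are closed affine subspaces, $\R_{H_{1}}$ and $\R_{H_{2}}$ are isometries with $\Fix \R_{H_{i}} = H_{i}$, so \cref{theorem:CCSHRT12}\cref{theorem:CCSHRT12:welldefined} applies and yields the invariance $\Pro_{H_{1} \cap H_{2}}\CC{\mathcal{S}}^{k}x = \Pro_{H_{1} \cap H_{2}}x$ for every $k \in \mathbb{N}$. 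Second, if $\CC{\mathcal{S}}^{k}x \in H_{1} \cap H_{2}$, then $\Pro_{H_{1} \cap H_{2}}\CC{\mathcal{S}}^{k}x = \CC{\mathcal{S}}^{k}x$. Combining these gives $\CC{\mathcal{S}}^{k}x = \Pro_{H_{1} \cap H_{2}}x$.

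For the one-step cases I would invoke the results already at hand. When $x \in H_{1} \cap H_{2}$, clearly $\mathcal{S}(x) = \{x\}$ and $\CC{\mathcal{S}}x = x$. When $x \in H_{2}$ (possibly with $x \notin H_{1}$), I apply \cref{cor:CCSHU} with $H := H_{1}$ and $U := H_{2}$, which gives $\CC{\mathcal{S}}x = \Pro_{H_{1} \cap H_{2}}x$ directly. When $x \notin H_{1}$ and $\R_{H_{1}}x \notin H_{2}$, \cref{cor:Tn:CCSP:R}\cref{item:cor:Tn:CCSP:R:S2} gives the same one-step conclusion.

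The remaining two cases require chaining. If $x \in H_{1} \setminus H_{2}$, then $\R_{H_{1}}x = x$, so $\mathcal{S}(x) = \{x, \R_{H_{2}}x\}$ (with $x \neq \R_{H_{2}}x$); by \cref{thm:SymForm}\cref{thm:SymForm:2}, $\CC{\mathcal{S}}x = \tfrac{x + \R_{H_{2}}x}{2} = \Pro_{H_{2}}x \in H_{2}$, which drops us into the one-step case above and delivers $\CC{\mathcal{S}}^{2}x \in H_{1} \cap H_{2}$. If $x \notin H_{1} \cup H_{2}$ but $\R_{H_{1}}x \in H_{2}$, then $\R_{H_{2}}\R_{H_{1}}x = \R_{H_{1}}x$, so $\mathcal{S}(x) = \{x, \R_{H_{1}}x\}$ (with $x \neq \R_{H_{1}}x$), and again by \cref{thm:SymForm}\cref{thm:SymForm:2}, $\CC{\mathcal{S}}x = \tfrac{x + \R_{H_{1}}x}{2} = \Pro_{H_{1}}x \in H_{1}$. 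Applying one of the preceding cases to $y := \Pro_{H_{1}}x$ (namely the $y \in H_{1} \cap H_{2}$ case if $\Pro_{H_{1}}x \in H_{2}$, otherwise the $y \in H_{1} \setminus H_{2}$ case) yields $\CC{\mathcal{S}}^{j}y \in H_{1} \cap H_{2}$ for $j \in \{1,2\}$, and therefore $\CC{\mathcal{S}}^{k}x \in H_{1} \cap H_{2}$ for some $k \leq 3$.

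The step I expect to require the most care is bookkeeping in the chained cases: one has to verify that the sets $\mathcal{S}(x)$ really degenerate to two-element (or one-element) sets, so that \cref{thm:SymForm} produces the midpoint identities $\tfrac{x + \R_{H_{2}}x}{2} = \Pro_{H_{2}}x$ and $\tfrac{x + \R_{H_{1}}x}{2} = \Pro_{H_{1}}x$. Beyond that, the argument is a short case analysis leveraging the invariance of $\Pro_{H_{1} \cap H_{2}}$ along the orbit, which is what ensures that landing in $H_{1} \cap H_{2}$ within three iterations is the same as landing on $\Pro_{H_{1} \cap H_{2}}x$ itself.
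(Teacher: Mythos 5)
Your proof is correct, and its skeleton --- a case analysis on whether $x\in H_{1}$, $x\in H_{2}$, and $\R_{H_{1}}x\in H_{2}$, combined with the orbit invariance $\Pro_{H_{1}\cap H_{2}}\CC{\mathcal{S}}^{k}x=\Pro_{H_{1}\cap H_{2}}x$ to upgrade ``lands in $H_{1}\cap H_{2}$'' to ``equals $\Pro_{H_{1}\cap H_{2}}x$'' --- is the same as the paper's. The one genuine difference is how you dispatch the non-terminal cases: you observe that the theorem's own $\mathcal{S}=\{\Id,\R_{H_{1}},\R_{H_{2}}\R_{H_{1}}\}$ is exactly the $\mathcal{S}$ of \cref{cor:CCSHU} with $H:=H_{1}$ and $U:=H_{2}$, so every point of $H_{2}$ converges in one step; this collapses the paper's Case~3 (with its sub-cases 3.1, 3.2 and the further split on whether $\R_{H_{1}}(x^{(1)})\in H_{2}$) into a single invocation, and likewise shortens the tail of the paper's Case~4. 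The paper instead redoes those steps by hand via \cref{thm:SymForm} and the external references {[}Propositions~4.2(iii) and 4.4{]} of \cite{BOyW2019Isometry}, whereas you get the invariance from \cref{theorem:CCSHRT12}\cref{theorem:CCSHRT12:welldefined} and the terminal identification by the elementary remark that a point of $H_{1}\cap H_{2}$ is its own projection. Both routes are sound; yours is shorter and stays inside the paper's own toolkit. (The only bookkeeping point worth a sentence in a final write-up: \cref{cor:CCSHU} requires $u_{1}\neq 0$, but if $u_{1}=0$ then $H_{1}=\mathcal{H}$ and every $x\in H_{2}$ already falls into your trivial case, so nothing is lost.)
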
	

\begin{proof}
	Applying  \cite[Proposition~4.4]{BOyW2019Isometry} and \cite[Proposition~4.2(iii)]{BOyW2019Isometry}  with substituting $m=3$, $T_{1}=\Id$, $T_{2} =\R_{H_{1}}$,  $T_{3} =\R_{H_{2}}\R_{H_{1}}$ and $W =H_{1} \cap H_{2} $, we   obtain respectively  that
	\begin{subequations}
			\begin{align} 
	&	(\forall x \in \mathcal{H})  \quad \CC{\mathcal{S}} x = \Pro_{H_{1} \cap H_{2}}x \Leftrightarrow \CC{\mathcal{S}} x \in H_{1} \cap H_{2}; \label{eq:prop:Tn:CCSP:R:S2:equivalence}\\
	&	(\forall x \in \mathcal{H})  (\forall k \in \mathbb{N}) \quad 	 	\Pro_{H_{1} \cap H_{2}} \CC{\mathcal{S}}^{k} x =\Pro_{H_{1} \cap H_{2}} x.\label{eq:prop:Tn:CCSP:R:S2:PH1H2CCSk}
		\end{align}
	\end{subequations} 
	Let $x \in \mathcal{H}$. According to \cref{defn:cir:map},
	\begin{align} \label{eq:them:Tn:CCSP:R:S2:CCS} 
	\CC{\mathcal{S}} x =\CCO{ (\mathcal{S}(x) )}= \CCO{ (\{ x, \R_{H_{1}}x, \R_{H_{2}}\R_{H_{1}} x \}) }.
	\end{align}
	 We have exactly the following four cases.
	
	\textbf{Case~1:} $\R_{H_{1}}x -x =0 $ and $\R_{H_{2}}\R_{H_{1}}x- \R_{H_{1}}x=0$.  Then, in view of  \cref{eq:them:Tn:CCSP:R:S2:CCS}  and \cref{thm:SymForm}\cref{thm:SymForm:1}, $\CC{\mathcal{S}} x =\CCO{(\{x\})}=x = \Pro_{H_{1} \cap H_{2}}x$.

	\textbf{Case~2:} $\R_{H_{1}}x -x \neq 0 $ and $\R_{H_{2}}\R_{H_{1}}x- \R_{H_{1}}x \neq 0$, that is, $x \notin \Fix \R_{H_{1}} =H_{1}$ and $\R_{H_{1}}x \notin \Fix \R_{H_{2}} =H_{2}$. Then using \cref{cor:Tn:CCSP:R}\cref{item:cor:Tn:CCSP:R:S2}, we obtain that $\CC{\mathcal{S}} x = \Pro_{H_{1} \cap H_{2}}x$.
	
	\textbf{Case~3:} $\R_{H_{1}}x -x =0 $ and $\R_{H_{2}}\R_{H_{1}}x- \R_{H_{1}}x \neq 0$. Then $	\mathcal{S} (x)   =\{ \R_{H_{1}}x, \R_{H_{2}}\R_{H_{1}}x\}$,
	which, combining with \cref{eq:them:Tn:CCSP:R:S2:CCS}  and \cref{thm:SymForm}\cref{thm:SymForm:2}, implies that 
	\begin{align}  \label{eq:prop:Tn:CCSP:R:S2:Case3}
	x^{(1)} :=\CC{\mathcal{S}} x = \frac{\R_{H_{1}}x + \R_{H_{2}}\R_{H_{1}}x }{2} =  \Pro_{H_{2}}\R_{H_{1}}x \in H_{2}.
	\end{align}
	
	Case~3.1: Assume $x^{(1)} \in H_{1} $. Then by \cref{eq:prop:Tn:CCSP:R:S2:Case3},  $\CC{\mathcal{S}} x \in H_{1} \cap H_{2}$. Hence, by \cref{eq:prop:Tn:CCSP:R:S2:equivalence},  $\CC{\mathcal{S}} x = \Pro_{H_{1} \cap H_{2} }x$.
	
	Case~3.2: Assume $x^{(1)} \notin H_{1}$. If $\R_{H_{1}} (x^{(1)} ) \notin H_{2}$, then 
 apply \cref{cor:Tn:CCSP:R}\cref{item:cor:Tn:CCSP:R:S2} with $x = x^{(1)} $ to obtain that $\CC{\mathcal{S}}^{2} x =\CC{\mathcal{S}} (x^{(1)} )= \Pro_{H_{1} \cap H_{2}} (x^{(1)})= \Pro_{H_{1} \cap H_{2}} \CC{\mathcal{S}} x \stackrel{\cref{eq:prop:Tn:CCSP:R:S2:PH1H2CCSk}}{=} \Pro_{H_{1} \cap H_{2}} x$.
	
 Assume that $\R_{H_{1}} (x^{(1)} ) \in H_{2}$.
	Then $\mathcal{S} (x^{(1)}  )   =\{ x^{(1)}, \R_{H_{1}} (x^{(1)} )  \}$, and, by
  \cref{defn:cir:map} and  \cref{thm:SymForm}\cref{thm:SymForm:2},  
	\begin{align}  \label{eq:prop:Tn:CCSP:R:S2:Case3.2.2:H1}
	x^{(2)} :=\CC{\mathcal{S}}  (x^{(1)}  ) = \frac{  x^{(1)}    + \R_{H_{1}} (x^{(1)} )   }{2} =  \Pro_{H_{1}} (x^{(1)}  )  \in H_{1}.
	\end{align}
	Moreover, 
	because $H_{2}$ is an affine subspace,  $ x^{(1)}  \in H_{2}$ and $\R_{H_{1}} (x^{(1)} ) \in H_{2} $, we know that $	x^{(2)} =\CC{\mathcal{S}}  (x^{(1)}  ) = \frac{  x^{(1)}    + \R_{H_{1}} (x^{(1)} )   }{2} \in H_{2}$,
which, combining with \cref{eq:prop:Tn:CCSP:R:S2:Case3.2.2:H1}, yields  that $ \CC{\mathcal{S}} (x^{(1)} ) =\CC{\mathcal{S}}^{2} x = 	x^{(2)} \in H_{1} \cap H_{2} $. Hence,
	$ \CC{\mathcal{S}}^{2} x = \CC{\mathcal{S}} (x^{(1)} ) \stackrel{ \cref{eq:prop:Tn:CCSP:R:S2:equivalence}}{=} \Pro_{H_{1} \cap H_{2}} (x^{(1)})= \Pro_{H_{1} \cap H_{2}} \CC{\mathcal{S}} x \stackrel{\cref{eq:prop:Tn:CCSP:R:S2:PH1H2CCSk}}{=} \Pro_{H_{1} \cap H_{2}} x$.
	
	\textbf{Case~4:} $\R_{H_{1}}x -x \neq 0 $ and $\R_{H_{2}}\R_{H_{1}}x- \R_{H_{1}}x=0$.  Then  $\mathcal{S} (x)   =\{ x,  \R_{H_{1}}x\}$,
	and, by \cref{thm:SymForm}\cref{thm:SymForm:2},   $x^{(1)} :=\CC{\mathcal{S}} x = \frac{x + \R_{H_{1}}x }{2} =  \Pro_{H_{1}} x \in H_{1}$.
Then 
 $\R_{H_{1}} (	x^{(1)}   )  = 	x^{(1)} $ and 
$\mathcal{S}   (	x^{(1)} )  
	=  \{  \R_{H_{1}} (	x^{(1)}   ) ,   \R_{H_{2}}\R_{H_{1}} (	x^{(1)}   )  \}.$

	If $ \R_{H_{1}} (	x^{(1)}   ) = \R_{H_{2}}\R_{H_{1}} (	x^{(1)}   ) $, then applying case 1 with $x=x^{(1)}$, we derive that $\CC{\mathcal{S}}^{2} x =\CC{\mathcal{S}} (x^{(1)} ) = \Pro_{H_{1} \cap H_{2}} (x^{(1)} )= \Pro_{H_{1} \cap H_{2}} (\CC{\mathcal{S}}  x) \stackrel{\cref{eq:prop:Tn:CCSP:R:S2:PH1H2CCSk}}{=} \Pro_{H_{1} \cap H_{2}} x$;
	If $ \R_{H_{1}} (	x^{(1)}   ) \neq  \R_{H_{2}}\R_{H_{1}} (	x^{(1)}   ) $, then employing case 3 with
	$x=x^{(1)}$, we obtain that either $\CC{\mathcal{S}}^{2} x =\CC{\mathcal{S}} (x^{(1)} ) = \Pro_{H_{1} \cap H_{2}} (x^{(1)} ) \stackrel{\cref{eq:prop:Tn:CCSP:R:S2:PH1H2CCSk}}{=} \Pro_{H_{1} \cap H_{2}} x$ or $\CC{\mathcal{S}}^{3} x =\CC{\mathcal{S}}^{2} (x^{(1)} ) = \Pro_{H_{1} \cap H_{2}} (x^{(1)})  \stackrel{\cref{eq:prop:Tn:CCSP:R:S2:PH1H2CCSk}}{=} \Pro_{H_{1} \cap H_{2}} x$.
	
	Altogether, the proof is complete.
\end{proof}


\subsection*{CRMs associated with halfspaces and hyperplanes } \label{sec:FiniteConverHH}
\begin{proposition} \label{prop:CCS1:feasible:half-space}
	Suppose that $u_{1} \neq 0$ and $u_{2}\neq 0$, 
	and  that $\innp{u_{1} , u_{2}} \geq 0$. Set $\mathcal{S} :=\{ \Id, \R_{W_{1}}, \R_{W_{2}} \}$. 
	Then $(\forall x \in W_{1} \cup W_{2})$, $\CC{\mathcal{S}} x =\Pro_{W_{1} \cap W_{2}}x$.	
\end{proposition}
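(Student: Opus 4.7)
The plan is to exploit the fact that when $x$ lies in one of the halfspaces, one of the reflections in $\mathcal{S}(x)$ collapses to $x$ itself, so that $\mathcal{S}(x)$ becomes a set of at most two points and the circumcenter can be read off explicitly from \cref{thm:SymForm}. By symmetry I may assume $x \in W_1$; then $\Pro_{W_1} x = x$, hence $\R_{W_1} x = x$, and $\mathcal{S}(x) = \{x, \R_{W_2} x\}$.

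First I handle the trivial subcase $x \in W_1 \cap W_2$: here $\R_{W_2} x = x$ also, so by \cref{thm:SymForm}\cref{thm:SymForm:1} we have $\CC{\mathcal{S}} x = x = \Pro_{W_1 \cap W_2} x$. The main subcase is $x \in W_1 \setminus W_2$. Then $\R_{W_2} x \neq x$, so \cref{thm:SymForm}\cref{thm:SymForm:2} yields
\begin{equation*}
\CC{\mathcal{S}} x \;=\; \tfrac{1}{2}(x + \R_{W_2} x) \;=\; \Pro_{W_2} x.
\end{equation*}

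The key step, and the only place where the hypothesis $\innp{u_1,u_2} \geq 0$ enters, is to show that $\Pro_{W_2} x \in W_1$. Since $x \notin W_2$, \cref{fact:FactWFactH} gives the explicit formula $\Pro_{W_2} x = x + \lambda u_2$ with $\lambda := \frac{\eta_2 - \innp{x, u_2}}{\norm{u_2}^2} < 0$. Taking inner product with $u_1$,
\begin{equation*}
\innp{\Pro_{W_2} x, u_1} \;=\; \innp{x, u_1} + \lambda \innp{u_1, u_2} \;\leq\; \innp{x, u_1} \;\leq\; \eta_1,
\end{equation*}
where the first inequality uses $\lambda < 0$ together with $\innp{u_1, u_2} \geq 0$, and the second uses $x \in W_1$. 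Hence $\Pro_{W_2} x \in W_1 \cap W_2$, and \cref{fact:AsubseteqB:Projection} (applied with $A = W_1 \cap W_2 \subseteq W_2 = B$) upgrades this to $\Pro_{W_2} x = \Pro_{W_1 \cap W_2} x$, finishing this subcase. The symmetric case $x \in W_2 \setminus W_1$ is handled by interchanging the roles of the indices $1$ and $2$, since the hypothesis $\innp{u_1, u_2} \geq 0$ is symmetric in them.

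The only delicate point is the sign bookkeeping in the key inequality; beyond that the argument is entirely case analysis driven by the collapse $\R_{W_i} x = x$ whenever $x \in W_i$, and no deeper properties of the circumcenter beyond \cref{thm:SymForm}\cref{thm:SymForm:2} are needed.
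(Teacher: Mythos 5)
Your proof is correct and follows essentially the same route as the paper's: the same case split on whether $x$ lies in both halfspaces or only one, the reduction of $\mathcal{S}(x)$ to a two-point set via $\R_{W_1}x=x$, the midpoint formula from \cref{thm:SymForm}\cref{thm:SymForm:2}, the sign argument using $\innp{u_1,u_2}\geq 0$ to place $\Pro_{W_2}x$ in $W_1$, and the upgrade to $\Pro_{W_1\cap W_2}x$ via \cref{fact:AsubseteqB:Projection}. No gaps.
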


\begin{proof}
	Let $x \in W_{1} \cup W_{2}$. Then we have exactly the following three cases:
	
	\textbf{Case~1:} $x \in W_{1} \cap W_{2}$. Then  by \cref{thm:SymForm}\cref{thm:SymForm:1}, clearly $ \CC{\mathcal{S}} x =\CCO{ (\mathcal{S} (x) ) }=x=\Pro_{W_{1} \cap W_{2}}x$.

	\textbf{Case~2:} $x \in W_{1} \cap W^{c}_{2}$.   Then $\R_{W_{1}}x =x$ and, by \cref{fact:FactWFactH}, $\Pro_{W_{2}}x =\Pro_{H_{2}}x$ and $\R_{W_{2}}x = \R_{H_{2}}x $. Hence, $	\mathcal{S} (x)=\{ x, \R_{W_{1}}x, \R_{W_{2}}x \}= \{ x,  \R_{H_{2}}x \}$, and
 by \cref{thm:SymForm}\cref{thm:SymForm:2},  $	\CC{\mathcal{S}} x=  \CCO{(\mathcal{S}( x )   ) } = \CCO{(\{ x,  \R_{W_{2}}x \}  )}=\frac{x +  \R_{W_{2}}x}{2} =\Pro_{W_{2}} x$.
	Using definitions of $W_{1}, W_{2}$ in \cref{eq:WH} and   $x \in W_{1} \cap W^{c}_{2}$, we know that $\innp{x, u_{1}} \leq \eta_{1}$ and $ \innp{x, u_{2}} > \eta_{2}$.
	Combine this with the assumption $\innp{u_{1}, u_{2}} \geq 0$ to obtain that
	\begin{align} \label{eq:prop:CCS1:feasible:half-space:leq:eta1}
	\innp{x, u_{1}} + \frac{\eta_{2} - \innp{x,u_{2}} }{\norm{u_{2}}^{2}} \innp{u_{2}, u_{1}} \leq \eta_{1}.
	\end{align}
In view of \cref{fact:Projec:Hyperplane},
	\begin{align} \label{eq:prop:CCS1:feasible:half-space:IN:H1}
	\innp{ \Pro_{H_{2}} x, u_{1} } = \Innp{x + \frac{\eta_{2} - \innp{x,u_{2}} }{\norm{u_{2}}^{2}}u_{2}, u_{1}} = \innp{x, u_{1}} + \frac{\eta_{2} - \innp{x,u_{2}} }{\norm{u_{2}}^{2}} \innp{u_{2}, u_{1}}  \stackrel{\cref{eq:prop:CCS1:feasible:half-space:leq:eta1}}{\leq} \eta_{1},
	\end{align}
	which implies that $\Pro_{H_{2}} x \in W_{1} $. Hence, $\Pro_{W_{2}}x =\Pro_{H_{2}} x \in W_{1} \cap H_{2}  \subseteq W_{1} \cap W_{2}$.
	Apply \cref{fact:AsubseteqB:Projection} with $A=W_{1} \cap W_{2}$ and $B=W_{2}$ to obtain that $\CC{\mathcal{S}} x =\Pro_{W_{2}}x =\Pro_{W_{1} \cap W_{2}}x $.
	
	\textbf{Case~3:} $x \in W^{c}_{1} \cap W_{2}$. Swap $W_{1}$ and $W_{2}$ in Case~2 above to obtain the required result.
\end{proof}

\subsubsection*{$u_{1}$ and $u_{2}$ are linearly dependent}
\begin{lemma} \label{lem:CCSu10u20}
	Assume that $u_{1} =0$ or $u_{2} =0$. The following statements hold:
	\begin{enumerate}
		\item \label{lem:CCS:R:u1=0ORu2=0}	Assume that $H_{1} \cap H_{2} \neq \varnothing$. Set $\mathcal{S}_{1} :=\{\Id, \R_{H_{1}}, \R_{H_{2}}\}$ and $\mathcal{S}_{2} :=\{\Id, \R_{H_{1}}, \R_{H_{2}}\R_{H_{1}} \}$. Then
		\begin{align*}
		(\forall x \in \mathcal{H}) \quad \CC{\mathcal{S}_{1}}x= \CC{\mathcal{S}_{2}}x= \Pro_{H_{1} \cap H_{2}}x.
		\end{align*}
		\item \label{lem:CCS:u1eq0:or:u2eq0}
		Assume that $W_{1} \cap W_{2} \neq \varnothing$. Set $\mathcal{S}_{1} :=\{\Id, \R_{W_{1}}, \R_{W_{2}} \}$ and $\mathcal{S}_{2} :=\{\Id, \R_{W_{1}}, \R_{W_{2}}\R_{W_{1}} \}$. Then
		\begin{align*}
		(\forall x \in \mathcal{H}) \quad \CC{\mathcal{S}_{1}}x = \CC{\mathcal{S}_{2}}x = \Pro_{W_{1} \cap W_{2}}x.
		\end{align*}
		\item \label{lem:CCSu10u20:HW} Assume that $H_{1} \cap W_{2} \neq \varnothing$.  Set $\mathcal{S}_{1} :=\{\Id, \R_{H_{1}}, \R_{W_{2}} \}$, $\mathcal{S}_{2} :=\{\Id, \R_{H_{1}}, \R_{W_{2}}\R_{H_{1}} \}$ and $\mathcal{S}_{3} :=\{\Id, \R_{W_{2}}, \R_{H_{1}} \R_{W_{2}}\}$.  Then  
		\begin{align*}
		(\forall x \in \mathcal{H}) \quad \CC{\mathcal{S}_{1}}x=\CC{\mathcal{S}_{2}}x=\CC{\mathcal{S}_{3}}x=\Pro_{H_{1} \cap W_{2}}x.
		\end{align*}
	\end{enumerate} 
	
\end{lemma}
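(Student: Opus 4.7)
The plan is to observe that the assumption $u_{i}=0$, combined with the nonemptiness of the relevant intersection, forces the corresponding set to coincide with $\mathcal{H}$, which degenerates the reflector to the identity and collapses each operator set $\mathcal{S}_{j}(x)$ to a two-point set. Concretely, if $u_{1}=0$ then $H_{1}=\{x : 0 = \eta_{1}\}$, so the assumption $H_{1}\cap H_{2}\neq\varnothing$ forces $\eta_{1}=0$ and $H_{1}=\mathcal{H}$, whence $\Pro_{H_{1}}=\Id$ and $\R_{H_{1}}=\Id$; consequently $H_{1}\cap H_{2}=H_{2}$ and $\Pro_{H_{1}\cap H_{2}}=\Pro_{H_{2}}$. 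The analogous reductions apply for $W_{i}$ (using $\eta_{i}\geq 0$ from nonemptiness) and for the mixed case in \cref{lem:CCSu10u20:HW}.

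Once these reductions are in place, I would carry out each of the three parts by a short case split on which of $u_{1},u_{2}$ vanishes. For \cref{lem:CCS:R:u1=0ORu2=0} with $u_{1}=0$, we get $\mathcal{S}_{1}(x)=\{x,x,\R_{H_{2}}x\}=\{x,\R_{H_{2}}x\}$ and likewise $\mathcal{S}_{2}(x)=\{x,x,\R_{H_{2}}\R_{H_{1}}x\}=\{x,\R_{H_{2}}x\}$. By \cref{thm:SymForm}\cref{thm:SymForm:1}\&\cref{thm:SymForm:2} (handling separately the case $x=\R_{H_{2}}x$ and the case $x\neq\R_{H_{2}}x$) together with $\Pro_{H_{2}}x=\tfrac{1}{2}(x+\R_{H_{2}}x)$, both circumcenters equal $\Pro_{H_{2}}x=\Pro_{H_{1}\cap H_{2}}x$. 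The case $u_{2}=0$ is symmetric: $\R_{H_{2}}=\Id$ so $\mathcal{S}_{1}(x)=\mathcal{S}_{2}(x)=\{x,\R_{H_{1}}x\}$, giving $\Pro_{H_{1}}x=\Pro_{H_{1}\cap H_{2}}x$. The same template handles \cref{lem:CCS:u1eq0:or:u2eq0} verbatim with $H_{i}$ replaced by $W_{i}$ and $\R_{H_{i}}$ by $\R_{W_{i}}$, since $\tfrac{1}{2}(x+\R_{W_{i}}x)=\Pro_{W_{i}}x$ by the definition of reflector.

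For \cref{lem:CCSu10u20:HW} the same pattern applies, but one must verify all three choices of $\mathcal{S}$ in each of the two sub-cases. If $u_{1}=0$, then $\R_{H_{1}}=\Id$ and $H_{1}\cap W_{2}=W_{2}$, so $\mathcal{S}_{1}(x)=\mathcal{S}_{2}(x)=\{x,\R_{W_{2}}x\}$ and $\mathcal{S}_{3}(x)=\{x,\R_{W_{2}}x,\R_{W_{2}}x\}=\{x,\R_{W_{2}}x\}$, all giving $\Pro_{W_{2}}x=\Pro_{H_{1}\cap W_{2}}x$. If $u_{2}=0$, then $\eta_{2}\geq 0$ (from $H_{1}\cap W_{2}\neq\varnothing$), $W_{2}=\mathcal{H}$, $\R_{W_{2}}=\Id$, and $H_{1}\cap W_{2}=H_{1}$; each $\mathcal{S}_{j}(x)$ reduces to $\{x,\R_{H_{1}}x\}$, so all three circumcenters equal $\Pro_{H_{1}}x=\Pro_{H_{1}\cap W_{2}}x$.

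I expect no real obstacle here; the result is essentially a book-keeping exercise built on two observations: the degeneracy $u_{i}=0 \Rightarrow \R_{H_{i}}=\Id$ (or $\R_{W_{i}}=\Id$) forced by the nonemptiness hypothesis, and the two-point circumcenter identity $\CCO(\{a,b\})=\tfrac{a+b}{2}$ from \cref{thm:SymForm}\cref{thm:SymForm:2}, which matches the midpoint formula for the projection onto a hyperplane or halfspace. The only thing that needs care is to verify for each of the three (respectively three) operator sets that, after the reduction, the composed reflector terms $\R_{H_{2}}\R_{H_{1}}$, $\R_{W_{2}}\R_{W_{1}}$, $\R_{W_{2}}\R_{H_{1}}$, $\R_{H_{1}}\R_{W_{2}}$ collapse correctly, which is immediate once one of the two reflectors equals $\Id$.
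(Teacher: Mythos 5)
Your proposal is correct and follows essentially the same route as the paper: use the nonemptiness hypothesis to force the degenerate set to be all of $\mathcal{H}$ (so the corresponding reflector is the identity), collapse each $\mathcal{S}_{j}(x)$ to $\{x,\R x\}$ for the surviving reflector, and apply the one- and two-point cases of \cref{thm:SymForm} together with $\tfrac{1}{2}(x+\R x)=\Pro x$. The only difference is that the paper writes out part (i) and declares (ii) and (iii) "similar," whereas you spell out all the cases (and are slightly more careful in separating the singleton case $x=\R x$); the content is identical.
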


\begin{proof}
	\cref{lem:CCS:R:u1=0ORu2=0}: Note that  $H_{1} \cap H_{2} \neq \varnothing$ implies that for every $i \in \{1,2\}$, if $u_{i}=0$, then $H_{i} =\mathcal{H}$ and $\R_{H_{i}} =\Id$. Without loss of generality, assume $u_{1}=0$. Then  $H_{1} =\mathcal{H}$ and, by \cref{thm:SymForm}\cref{thm:SymForm:2}, $(\forall x \in \mathcal{H})$ $\CC{\mathcal{S}_{1}}x= \CC{\mathcal{S}_{2}}x= \CCO{ (\{  x, \R_{H_{2}}x \} )} = \frac{x+\R_{H_{2}}x}{2 }=\Pro_{H_{2}}x= \Pro_{H_{1} \cap H_{2}}x$.
	Hence, \cref{lem:CCS:R:u1=0ORu2=0} holds.
	
	The proofs for	\cref{lem:CCS:u1eq0:or:u2eq0} and \cref{lem:CCSu10u20:HW} are similar to the proof of 	\cref{lem:CCS:R:u1=0ORu2=0} and are omitted. 
\end{proof}

The following results are necessary to proofs of \Cref{prop:u1u2LD:S1:neq0,prop:u1u2LD:S2:<0}.  The long and mechanical proof of \cref{lem:u1u2LD} is left in the  Appendix.
\begin{lemma}\label{lem:u1u2LD}
	Assume  that $u_{1} \neq 0$ and $u_{2} \neq 0$, and that  $u_{1}$ and $u_{2}$ are linearly dependent.
	The following statements hold.
	\begin{enumerate}
		\item \label{item:lem:u1u2LD:u12neq0:innpneq0} $\innp{u_{1}, u_{2}} \neq 0$. Moreover, if  $\innp{u_{1}, u_{2}} > 0$, then   $u_{2} =  \frac{\norm{u_{2}}}{\norm{u_{1}}} u_{1}$ and $\innp{u_{1}, u_{2}} =  \norm{u_{1}}\norm{u_{2}}$; if $\innp{u_{1}, u_{2}} < 0$, then $u_{2} = -\frac{\norm{u_{2}}}{\norm{u_{1}}} u_{1}$ and $\innp{u_{1}, u_{2}} =- \norm{u_{1}}\norm{u_{2}}$.
		\item \label{item:lem:u1u2LD:>0} Assume that $\innp{u_{1}, u_{2}} > 0$. Then $\frac{\eta_{1}}{\norm{u_{1}}} =  \frac{\eta_{2}}{\norm{u_{2}}} \Leftrightarrow W_{1} =W_{2} \Leftrightarrow H_{1} =H_{2}$. Moreover, we have that $\frac{\eta_{1}}{\norm{u_{1}}} \neq \frac{\eta_{2}}{\norm{u_{2}}}  \Leftrightarrow  H_{1} \cap H_{2} =\varnothing$,   that 
		$\frac{\eta_{1}}{\norm{u_{1}}} < \frac{\eta_{2}}{\norm{u_{2}}} \Leftrightarrow W_{1} \subsetneqq W_{2}$, and that $\frac{\eta_{1}}{\norm{u_{1}}} > \frac{\eta_{2}}{\norm{u_{2}}} \Leftrightarrow W_{2} \subsetneqq W_{1}$.
			\item \label{item:lem:u1u2LD:<0} Assume that $\innp{u_{1}, u_{2}} < 0$. Then $\frac{\eta_{1}}{\norm{u_{1}}} =  -\frac{\eta_{2}}{\norm{u_{2}}} \Leftrightarrow  H_{1} =H_{2}$. Furthermore  $\frac{\eta_{1}}{\norm{u_{1}}} \neq -\frac{\eta_{2}}{\norm{u_{2}}}  \Leftrightarrow  H_{1} \cap H_{2} =\varnothing$. Moreover,  $W_{1} \cap W_{2} \neq \varnothing \Leftrightarrow  \eta_{1}\norm{u_{2}} +\eta_{2} \norm{u_{1}} \geq 0$.  In addition, if $W_{1} \cap W_{2} \neq \varnothing$, then	$\inte W_{1}\cup W_{2} = W_{1} \cup \inte W_{2}=\mathcal{H}$ and $H_{1} \subseteq W_{2}$ and $H_{2} \subseteq W_{1}$.

		\item  \label{item:lem:u1u2LD:u12neq0:innp>0:a} Assume that $\innp{u_{1}, u_{2}} > 0$ and   $\frac{\eta_{1}}{\norm{u_{1}}} \geq  \frac{\eta_{2}}{\norm{u_{2}}}$. Then $(\forall  x \in \mathcal{H} )$ $\norm{\Pro_{H_{1}}x - \Pro_{H_{2}}x} = \frac{\eta_{1}}{\norm{u_{1}}} -\frac{\eta_{2}}{\norm{u_{2}}}  $. Moreover,   $(x \in \mathcal{H} \smallsetminus W_{1})$   $\R_{W_{1} }x\in W_{2} \Leftrightarrow \norm{x - \Pro_{W_{1}}x} \geq \norm{\Pro_{H_{1}}x-\Pro_{H_{2}}x } $.

	\item 	\label{item:lem:u1u2LD} Assume that $\innp{u_{1}, u_{2}} < 0$ and   $W_{1} \cap W_{2} \neq \varnothing$.  Then  $(\forall  x \in \mathcal{H} )$ $\norm{\Pro_{H_{1}}x - \Pro_{H_{2}}x} = \frac{\eta_{1}}{\norm{u_{1}}} +\frac{\eta_{2}}{\norm{u_{2}}}  $. Moreover,   $( \forall x \in \mathcal{H} \smallsetminus W_{1})$  $\R_{W_{1}}x \notin W_{2} \Leftrightarrow  \norm{x - \Pro_{W_{1}}x} > \norm{\Pro_{H_{1}}x-\Pro_{H_{2}}x } $.
		
			\item  \label{item:lem:u1u2LD:u12neq0:innp<0:W12} Assume that  $\innp{u_{1}, u_{2}} < 0$, $W_{1} \cap W_{2} \neq \varnothing$ and  $x \in W_{1} \cap W^{c}_{2}$. Then $\Pro_{W_{2}}x  =\Pro_{W_{1} \cap H_{2}  }x =\Pro_{W_{1} \cap W_{2}} x$.
			\item  \label{item:lem:u1u2LD:u12neq0:innp<0:d} Assume that  $\innp{u_{1}, u_{2}} < 0$ and  $W_{1} \cap W_{2} \neq \varnothing$. Let $x \in W_{2}$ with $\norm{x - \Pro_{H_{1}}x} > \norm{\Pro_{H_{1}}x-\Pro_{H_{2}}x }$.  Then 
			\begin{enumerate}
				\item \label{lemma:u1u2Ld:Less0:xinW2:xPh1Larger:xnotinW1} $x \notin W_{1}$.
				\item \label{lemma:u1u2Ld:Less0:xinW2:xPh1Larger:RW1notinW2} $\R_{W_{1}}x = \R_{H_{1}}x \notin W_{2}$.
				\item \label{lemma:u1u2Ld:Less0:xinW2:xPh1Larger:NormLarger} $\norm{\R_{W_{2}}\R_{W_{1}}x - \R_{W_{1}}x } =2 ( \norm{x - \Pro_{H_{1}}x} - \norm{ \Pro_{H_{1}}x- \Pro_{H_{2}}x}) 
				\leq 2\norm{x - \Pro_{H_{1}}x}  = \norm{ x -\R_{W_{1}}x }$.
			\end{enumerate}

		\item  \label{item:lem:u1u2LD:LD}    Let $x \in \mathcal{H}$.   Then  $x, \R_{H_{1}}x, \R_{H_{2}}x$ are affinely  dependent;  $x, \R_{H_{1}}x, \R_{H_{2}}\R_{H_{1}}x$ are affinely  dependent; $x, \R_{W_{1}}x, \R_{W_{2}}x$ are affinely  dependent; and $x, \R_{W_{1}}x, \R_{W_{2}}\R_{W_{1}}x$ are affinely  dependent.
	\end{enumerate}
\end{lemma}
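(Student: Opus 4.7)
The plan is to reduce everything to the one-dimensional subspace $\spn\{u_{1}\}$ and then verify each item mechanically using the explicit formulas in \cref{fact:Projec:Hyperplane,fact:FactWFactH,lem:WH:mathcalH:RWinW}. For (i), linear dependence with $u_{1}, u_{2} \neq 0$ forces $u_{2} = \lambda u_{1}$ for some $\lambda \in \mathbb{R} \smallsetminus \{0\}$, hence $\innp{u_{1}, u_{2}} = \lambda \norm{u_{1}}^{2} \neq 0$, and $\norm{u_{2}} = |\lambda|\norm{u_{1}}$ gives $\lambda = \pm \norm{u_{2}}/\norm{u_{1}}$ matching the sign of $\innp{u_{1}, u_{2}}$. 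Substituting this expression into the defining conditions of $H_{2}$ and $W_{2}$ rewrites them as level conditions on the single linear functional $\innp{\cdot, u_{1}}$, from which (ii) and (iii) follow by elementary one-variable comparisons. In particular, (iii) translates $W_{1} \cap W_{2} \neq \varnothing$ into $-\eta_{2}/\norm{u_{2}} \leq \eta_{1}/\norm{u_{1}}$, i.e., $\eta_{1}\norm{u_{2}} + \eta_{2}\norm{u_{1}} \geq 0$; the assertions $H_{1} \subseteq W_{2}$, $H_{2} \subseteq W_{1}$, and $\inte W_{1} \cup W_{2} = \mathcal{H}$ are then immediate from the rewritten descriptions.

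For (iv) and (v), compute $\Pro_{H_{1}}x - \Pro_{H_{2}}x$ directly via \cref{fact:Projec:Hyperplane}; using $u_{2} = \pm (\norm{u_{2}}/\norm{u_{1}}) u_{1}$, the difference collapses to a scalar multiple of $u_{1}/\norm{u_{1}}$ whose norm equals $|\eta_{1}/\norm{u_{1}} \mp \eta_{2}/\norm{u_{2}}|$, and the sign hypothesis in each item resolves the absolute value. For the equivalences involving $\R_{W_{1}}x$, assume $x \notin W_{1}$ so that $\R_{W_{1}}x = \R_{H_{1}}x$ by \cref{lem:WH:mathcalH:RWinW}, then compute $\innp{\R_{H_{1}}x, u_{2}}$ and compare it with $\eta_{2}$; since $\norm{x - \Pro_{W_{1}}x} = (\innp{x, u_{1}} - \eta_{1})/\norm{u_{1}}$, the condition $\R_{H_{1}}x \in W_{2}$ reorganizes into the required inequality between $\norm{x - \Pro_{W_{1}}x}$ and $\norm{\Pro_{H_{1}}x - \Pro_{H_{2}}x}$. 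For (vi), (iii) supplies $H_{2} \subseteq W_{1}$, while \cref{fact:FactWFactH} gives $\Pro_{W_{2}}x = \Pro_{H_{2}}x$ when $x \notin W_{2}$; a direct check using $\innp{u_{1}, u_{2}} < 0$ verifies $\Pro_{H_{2}}x \in W_{1}$, after which \cref{fact:AsubseteqB:Projection} applied with $B = W_{2}$ and $A \in \{W_{1} \cap W_{2}, W_{1} \cap H_{2}\}$ identifies all three projections.

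For (vii), part (a) is by contradiction from (v): if instead $x \in W_{1} \cap W_{2}$, then the equivalence in (v) applied to $x \notin W_{1}$ fails, while the hypothesis rules out $x \in W_{1}$ being compatible with the strict inequality; part (b) then combines (a) with \cref{lem:WH:mathcalH:RWinW} and the equivalence in (v) read in the reverse direction; part (c) is a distance calculation, noting by (b) that $\R_{W_{2}}\R_{W_{1}}x = \R_{H_{2}}\R_{H_{1}}x = 2\Pro_{H_{2}}\R_{H_{1}}x - \R_{H_{1}}x$, so the target norm is $2 |\innp{\R_{H_{1}}x, u_{2}} - \eta_{2}|/\norm{u_{2}}$, which simplifies via the identity of (v) to exactly $2(\norm{x - \Pro_{H_{1}}x} - \norm{\Pro_{H_{1}}x - \Pro_{H_{2}}x})$. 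For (viii), each of $\R_{H_{i}}x - x$ and $\R_{W_{i}}x - x$ lies in $\spn\{u_{i}\} = \spn\{u_{1}\}$, so every listed triple sits on the one-dimensional affine line $x + \spn\{u_{1}\}$ and is affinely dependent. The main obstacle is the sheer volume of case-by-case sign tracking in (iv)--(vii): the two cases $\innp{u_{1}, u_{2}} > 0$ and $\innp{u_{1}, u_{2}} < 0$ yield parallel but formally distinct identities, and the calculation in (vii)(c) must be organized carefully to produce exactly the stated expression.
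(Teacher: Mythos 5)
Your proposal is correct and follows essentially the same route as the paper's proof: write $u_{2}=\pm\frac{\norm{u_{2}}}{\norm{u_{1}}}u_{1}$ and reduce every item to one-dimensional computations with the explicit formulas for $\Pro_{H_{i}}$, $\Pro_{W_{i}}$, and $\R_{W_{i}}$. The only notable (and harmless) shortcut is deriving (vii)(b) by citing the equivalence in (v) instead of recomputing $\innp{\R_{W_{1}}x,u_{2}}$ directly, as the paper does.
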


\begin{proposition} \label{prop:u1u2LD:S1:neq0}
	Assume that $u_{1} \neq 0$ and $u_{2} \neq 0$ and that $u_{1}$ and $u_{2}$ are linearly dependent. Assume that $W_{1} \cap W_{2} \neq \varnothing$.  Set $\mathcal{S} :=\{ \Id, \R_{W_{1}}, \R_{W_{2}} \}$.  
	Then  exactly one of the following cases occurs:	
	
	  \emph{Case~1}: $\innp{u_{1}, u_{2}} > 0$. Then if $\frac{\eta_{1}}{\norm{u_{1}}} =  \frac{\eta_{2}}{\norm{u_{2}}}$, then $(\forall x \in \mathcal{H})$ $\CC{\mathcal{S}}x =\Pro_{W_{1} \cap W_{2}}x$; otherwise, $(\forall x \in W_{1} \cup W_{2})$ $ \CC{\mathcal{S}}x =\Pro_{W_{1} \cap W_{2}}x$, and
		$(\forall x \in \mathcal{H} \smallsetminus (W_{1} \cup W_{2}))$   $ \CC{\mathcal{S}}x = \varnothing$.

	  \emph{Case~2}:   $\innp{u_{1}, u_{2}} < 0$.  Then $W_{1} \cup W_{2} =\mathcal{H} $ and $(\forall x \in \mathcal{H})$ $\CC{\mathcal{S}}x =\Pro_{W_{1} \cap W_{2}}x$.
 
Consequently,  $(\forall x \in W_{1} \cup W_{2})$ $\CC{\mathcal{S}}x =\Pro_{W_{1} \cap W_{2}}x$.
\end{proposition}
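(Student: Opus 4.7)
The plan is to exploit the dichotomy $\innp{u_{1},u_{2}} \neq 0$ guaranteed by \cref{lem:u1u2LD}\cref{item:lem:u1u2LD:u12neq0:innpneq0}, splitting into Case~1 and Case~2, and within each case to analyze $\mathcal{S}(x)=\{x,\R_{W_{1}}x,\R_{W_{2}}x\}$ according to whether $x$ lies in $W_{1}\cap W_{2}$, in exactly one of the halfspaces, or in neither. The key is that in most sub-configurations $\mathcal{S}(x)$ collapses to at most two distinct points, so its circumcenter is either $x$ itself or a midpoint of the form $\Pro_{W_{i}}x$, which one then identifies with $\Pro_{W_{1}\cap W_{2}}x$. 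In the one remaining sub-configuration (Case~1 with ratios unequal, $x$ outside $W_{1}\cup W_{2}$) I invoke affine dependence and \cref{thm:SymForm}\cref{thm:SymForm:3:b} to conclude that the circumcenter does not exist.

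For Case~1 when $\frac{\eta_{1}}{\norm{u_{1}}} = \frac{\eta_{2}}{\norm{u_{2}}}$, \cref{lem:u1u2LD}\cref{item:lem:u1u2LD:>0} yields $W_{1}=W_{2}$, hence $\R_{W_{1}}=\R_{W_{2}}$; then $\mathcal{S}(x)=\{x\}$ if $x\in W_{1}$ and $\mathcal{S}(x)=\{x,\R_{W_{1}}x\}$ otherwise, so in both sub-cases $\CC{\mathcal{S}}x=\Pro_{W_{1}}x=\Pro_{W_{1}\cap W_{2}}x$. When the ratios differ, \cref{lem:u1u2LD}\cref{item:lem:u1u2LD:>0} gives a strict inclusion, and I would assume without loss of generality $W_{1}\subsetneqq W_{2}$, so $W_{1}\cap W_{2}=W_{1}$ and $W_{1}\cup W_{2}=W_{2}$. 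For $x\in W_{2}$ the set $\mathcal{S}(x)$ is again a singleton or a two-point set whose midpoint is $\Pro_{W_{1}}x=\Pro_{W_{1}\cap W_{2}}x$. For $x\notin W_{2}$ one automatically has $x\notin W_{1}$, and \cref{lem:WH:mathcalH:RWinW} gives $\R_{W_{i}}x=\R_{H_{i}}x$ for $i\in\{1,2\}$; \cref{lem:u1u2LD}\cref{item:lem:u1u2LD:u12neq0:innp>0:a} supplies $\norm{\Pro_{H_{1}}x-\Pro_{H_{2}}x}=\bigl|\frac{\eta_{1}}{\norm{u_{1}}}-\frac{\eta_{2}}{\norm{u_{2}}}\bigr|>0$, so the three elements of $\mathcal{S}(x)$ are pairwise distinct; \cref{lem:u1u2LD}\cref{item:lem:u1u2LD:LD} tells us they are affinely dependent; hence \cref{thm:SymForm}\cref{thm:SymForm:3:b} delivers $\CC{\mathcal{S}}x=\varnothing$.

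For Case~2, \cref{lem:u1u2LD}\cref{item:lem:u1u2LD:<0} immediately yields $W_{1}\cup W_{2}=\mathcal{H}$, so the claim $\CC{\mathcal{S}}x=\Pro_{W_{1}\cap W_{2}}x$ must hold on all of $\mathcal{H}$. If $x\in W_{1}\cap W_{2}$ then $\mathcal{S}(x)=\{x\}$ and $\CC{\mathcal{S}}x=x$. If $x\in W_{1}\smallsetminus W_{2}$, then $\R_{W_{1}}x=x$ while \cref{lem:WH:mathcalH:RWinW} forces $\R_{W_{2}}x=\R_{H_{2}}x$, so the circumcenter collapses to the midpoint $\Pro_{H_{2}}x=\Pro_{W_{2}}x$, which \cref{lem:u1u2LD}\cref{item:lem:u1u2LD:u12neq0:innp<0:W12} identifies with $\Pro_{W_{1}\cap W_{2}}x$. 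The case $x\in W_{2}\smallsetminus W_{1}$ is handled by swapping the roles of $1$ and $2$. The main obstacle is the bookkeeping in Case~1b with $x\notin W_{1}\cup W_{2}$: one must verify carefully that the three reflected points are genuinely distinct so that \cref{thm:SymForm}\cref{thm:SymForm:3:b} actually applies, since otherwise the circumcenter could coincide with a midpoint and the conclusion $\CC{\mathcal{S}}x=\varnothing$ would fail.
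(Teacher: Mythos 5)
Your proof is correct and follows essentially the same route as the paper: split on the sign of $\innp{u_{1},u_{2}}$ via \cref{lem:u1u2LD}\cref{item:lem:u1u2LD:u12neq0:innpneq0}, observe that $\mathcal{S}(x)$ collapses to at most two points in every subcase except one (where the circumcenter is then the relevant projection), and in the remaining subcase combine pairwise distinctness with \cref{lem:u1u2LD}\cref{item:lem:u1u2LD:LD} and \cref{thm:SymForm}\cref{thm:SymForm:3:b} to get $\CC{\mathcal{S}}x=\varnothing$. The only deviation is cosmetic: where the paper cites \cref{prop:CCS1:feasible:half-space} for $x\in W_{1}\cup W_{2}$ in the unequal-ratio subcase, you use the nesting $W_{1}\subsetneqq W_{2}$ from \cref{lem:u1u2LD}\cref{item:lem:u1u2LD:>0} to make that subcase a two-point circumcenter directly, a legitimate shortcut available precisely because $u_{1}$ and $u_{2}$ are linearly dependent.
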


\begin{proof}
By assumptions and   \cref{lem:u1u2LD}\cref{item:lem:u1u2LD:u12neq0:innpneq0}, we have either $\innp{u_{1}, u_{2}} > 0$ or $\innp{u_{1}, u_{2}} < 0$.
	
 \emph{Case~1}:  Assume that $\innp{u_{1}, u_{2}} > 0$.
If $\frac{\eta_{1}}{\norm{u_{1}}} =  \frac{\eta_{2}}{\norm{u_{2}}}$, then by  \cref{lem:u1u2LD}\cref{item:lem:u1u2LD:>0}, $W_{1} =W_{2}=W_{1} \cap W_{2} $, and $\mathcal{S}=\{ \Id, \R_{W_{1}}  \}$. Hence, by  \cref{thm:SymForm}\cref{thm:SymForm:2},  $	\CC{\mathcal{S}}x =\CCO{ (\{ x, \R_{W_{1}} x  \})} =\frac{ x+ \R_{W_{1}} x }{2} =\Pro_{W_{1}}x =  \Pro_{W_{1} \cap W_{2}}x$.

	Assume that $\frac{\eta_{1}}{\norm{u_{1}}} \neq  \frac{\eta_{2}}{\norm{u_{2}}}$. 
If  $x \in W_{1} \cup W_{2}$, then  \cref{prop:CCS1:feasible:half-space} yields    $\CC{\mathcal{S}}x = \Pro_{W_{1} \cap W_{2}}x$.
Suppose that $x \in \mathcal{H} \smallsetminus (W_{1} \cup W_{2})$. 
	By \cref{lem:WH:mathcalH:RWinW}, $\R_{W_{1}}x =\R_{H_{1}}x \in \inte W_{1}$ and $\R_{W_{2}}x =\R_{H_{2}}x \in \inte W_{2}$. Note that, by \cref{lem:u1u2LD}\cref{item:lem:u1u2LD:>0}, $H_{1} \cap H_{2} =\varnothing$. Then clearly $\Pro_{H_{1}}x \neq \Pro_{H_{2}}x$. Now we have 
	\begin{align*}
	x \neq \R_{W_{1}}x, x \neq \R_{W_{2}}x, ~\text{and}~ \R_{W_{1}}x =2 \Pro_{H_{1}}x -x  \neq 2 \Pro_{H_{2}}x -x =\R_{W_{2}}x,
	\end{align*}
	which, combining with \cref{lem:u1u2LD}\cref{item:lem:u1u2LD:LD} and \cref{thm:SymForm}\cref{thm:SymForm:3:b}, imply that $ \CC{\mathcal{S}}x = \varnothing$.

 \emph{Case~2}:  Assume that $\innp{u_{1}, u_{2}} < 0$.  Then  \cref{lem:u1u2LD}\cref{item:lem:u1u2LD:<0} implies that $\mathcal{H}=W_{1} \cup W_{2} $. Hence,  we have exactly the following three cases:
	
	\emph{Case~2.1}: $x \in W_{1} \cap W_{2}$. Then $\mathcal{S}(x)=\{x\}$ and by \cref{thm:SymForm}\cref{thm:SymForm:1}, $\CC{\mathcal{S}}x =x= \Pro_{W_{1} \cap W_{2}}x$.

\emph{Case~2.2}: 
	$x \in W_{1} \cap W^{c}_{2}$. Then $\mathcal{S}(x)=\{ x, \R_{W_{2}}x \}$, and by \cref{thm:SymForm}\cref{thm:SymForm:2} and \cref{lem:u1u2LD}\cref{item:lem:u1u2LD:u12neq0:innp<0:W12}, $	\CC{\mathcal{S}}x =\CCO{ (\{ x, \R_{W_{2}}x \} )}  =\Pro_{W_{2}}x = \Pro_{W_{1} \cap W_{2}}x$.

\emph{Case~2.3}:
	$x \in W_{2} \cap W^{c}_{1}$. This proof is similar to the proof of Case 2. 
	
	Altogether, the proof is complete.
\end{proof}

 \Cref{prop:u1u2LD:S1:neq0,prop:u1u2LD:S2:<0}  suggest that if $u_{1}$ and $u_{2}$ are linearly dependent, then the CRM induced by $\mathcal{S} :=\{ \Id, \R_{W_{1}}, \R_{W_{2}}\R_{W_{1}} \}$ performs worse than the CRM induced by $\mathcal{S} :=\{ \Id, \R_{W_{1}}, \R_{W_{2}} \}$.
\begin{proposition} \label{prop:u1u2LD:S2:<0}
	Assume that $u_{1} \neq 0$ and $u_{2} \neq 0$, and that $u_{1}$ and $u_{2}$ are linearly dependent. Set $\mathcal{S} :=\{ \Id, \R_{W_{1}}, \R_{W_{2}}\R_{W_{1}} \}$.  	Assume that $W_{1} \cap W_{2} \neq \varnothing$.   Then exactly one of the following cases occurs:

			 \emph{Case~1}:   $\innp{u_{1}, u_{2}} > 0$ and $\frac{\eta_{1}}{\norm{u_{1}}} \leq \frac{\eta_{2}}{\norm{u_{2}}} $. Then   $(\forall x \in \mathcal{H})$ $ \CC{\mathcal{S}}x=\Pro_{W_{1}}x =\Pro_{W_{1}\cap W_{2}}x $. 

			 \emph{Case~2}:    $\innp{u_{1}, u_{2}} > 0$ and $\frac{\eta_{1}}{\norm{u_{1}}} >\frac{\eta_{2}}{\norm{u_{2}}} $.   Then if $x \in W_{1}$, then $ \CC{\mathcal{S}}x=\Pro_{W_{2}}x =\Pro_{W_{1}\cap W_{2}}x $; if  $x \notin W_{1}$ and $\norm{x - \Pro_{W_{1}}x} \geq \norm{\Pro_{H_{1}}x-\Pro_{H_{2}}x }$, then  $ \CC{\mathcal{S}}x=\Pro_{W_{1}}x=\Pro_{H_{1}}x \notin  W_{1}\cap W_{2}$; and if $x \notin W_{1}$ and $\norm{x - \Pro_{W_{1}}x} < \norm{\Pro_{H_{1}}x-\Pro_{H_{2}}x }$, then $ \CC{\mathcal{S}}x =\varnothing$.

 \emph{Case~3}:   $\innp{u_{1}, u_{2}} < 0$.  Then if $x \in W_{1}  $, then $\CC{\mathcal{S}} x  =\Pro_{W_{1}\cap W_{2}}x$; if $x \notin W_{1}  $ and  $\norm{x - \Pro_{W_{1}}x} \leq  \norm{\Pro_{H_{1}}x-\Pro_{H_{2}}x }$, then $\CC{\mathcal{S}} x =\Pro_{W_{1}}x=\Pro_{W_{1}\cap W_{2}}x$; 
	 if $x \notin W_{1}  $,  $\norm{x - \Pro_{W_{1}}x} > \norm{\Pro_{H_{1}}x-\Pro_{H_{2}}x }$, and $\frac{\eta_{1}}{\norm{u_{1}}} = -\frac{\eta_{2}}{\norm{u_{2}}}$, then   $\CC{\mathcal{S}} x = \Pro_{W_{1}\cap W_{2}}x$; if $x \notin W_{1}  $,  $\norm{x - \Pro_{W_{1}}x} > \norm{\Pro_{H_{1}}x-\Pro_{H_{2}}x }$, and $\frac{\eta_{1}}{\norm{u_{1}}} \neq -\frac{\eta_{2}}{\norm{u_{2}}}$, then $\CC{\mathcal{S}} x =\varnothing$.

Consequently, $(\forall x \in W_{1})$  $ \CC{\mathcal{S}} x =\Pro_{W_{1}\cap W_{2}}x $. 
\end{proposition}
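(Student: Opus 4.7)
By \cref{lem:u1u2LD}\cref{item:lem:u1u2LD:u12neq0:innpneq0}, since $u_{1},u_{2}$ are linearly dependent and nonzero, $\innp{u_{1},u_{2}}\neq 0$, so the three listed cases exhaust all possibilities. In each case I would compute $\mathcal{S}(x)=\{x,\R_{W_{1}}x,\R_{W_{2}}\R_{W_{1}}x\}$, identify which elements coincide, and then read off $\CC{\mathcal{S}}x$ from \cref{thm:SymForm}. The key tools are the containment/intersection statements in \cref{lem:u1u2LD}\cref{item:lem:u1u2LD:>0}, \cref{item:lem:u1u2LD:<0}, the sharp tests for whether $\R_{W_{1}}x\in W_{2}$ in \cref{lem:u1u2LD}\cref{item:lem:u1u2LD:u12neq0:innp>0:a} and \cref{item:lem:u1u2LD}, and the affine dependence of $\{x,\R_{W_{1}}x,\R_{W_{2}}\R_{W_{1}}x\}$ from \cref{lem:u1u2LD}\cref{item:lem:u1u2LD:LD}.

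For Case~1, \cref{lem:u1u2LD}\cref{item:lem:u1u2LD:>0} gives $W_{1}\subseteq W_{2}$, so $W_{1}\cap W_{2}=W_{1}$. If $x\in W_{1}$ then also $x\in W_{2}$, making every reflection trivial and $\CC{\mathcal{S}}x=x=\Pro_{W_{1}}x$. If $x\notin W_{1}$, then \cref{lem:WH:mathcalH:RWinW} places $\R_{W_{1}}x\in\inte W_{1}\subseteq W_{2}$, collapsing $\R_{W_{2}}\R_{W_{1}}x$ to $\R_{W_{1}}x$, and \cref{thm:SymForm}\cref{thm:SymForm:2} yields $\CC{\mathcal{S}}x=\Pro_{W_{1}}x=\Pro_{H_{1}}x$, which lies in $W_{1}=W_{1}\cap W_{2}$. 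For Case~2, \cref{lem:u1u2LD}\cref{item:lem:u1u2LD:>0} gives $W_{2}\subsetneqq W_{1}$. When $x\in W_{1}$, $\R_{W_{1}}x=x$ collapses things and $\CC{\mathcal{S}}x=\Pro_{W_{2}}x=\Pro_{W_{1}\cap W_{2}}x$. When $x\notin W_{1}$, \cref{lem:u1u2LD}\cref{item:lem:u1u2LD:u12neq0:innp>0:a} distinguishes the two sub-cases: if $\norm{x-\Pro_{W_{1}}x}\geq\norm{\Pro_{H_{1}}x-\Pro_{H_{2}}x}$, then $\R_{W_{1}}x\in W_{2}$ so $\CC{\mathcal{S}}x=\Pro_{H_{1}}x$, and using $u_{2}=\tfrac{\norm{u_{2}}}{\norm{u_{1}}}u_{1}$ a short calculation shows every $y\in H_{1}$ satisfies $\innp{y,u_{2}}=\tfrac{\norm{u_{2}}}{\norm{u_{1}}}\eta_{1}>\eta_{2}$, so $\Pro_{H_{1}}x\notin W_{2}=W_{1}\cap W_{2}$; otherwise $\R_{W_{1}}x\notin W_{2}$ and the three points of $\mathcal{S}(x)$ are distinct and affinely dependent, forcing $\CC{\mathcal{S}}x=\varnothing$ by \cref{thm:SymForm}\cref{thm:SymForm:3:b}.

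For Case~3, the situation splits according to whether $x\in W_{1}$ and according to the sharp inequality in \cref{lem:u1u2LD}\cref{item:lem:u1u2LD}. When $x\in W_{1}$, $\R_{W_{1}}x=x$ gives $\mathcal{S}(x)\subseteq\{x,\R_{W_{2}}x\}$, and \cref{lem:u1u2LD}\cref{item:lem:u1u2LD:u12neq0:innp<0:W12} yields $\CC{\mathcal{S}}x=\Pro_{W_{2}}x=\Pro_{W_{1}\cap W_{2}}x$ (and $=x$ when additionally $x\in W_{2}$). When $x\notin W_{1}$ and $\norm{x-\Pro_{W_{1}}x}\leq\norm{\Pro_{H_{1}}x-\Pro_{H_{2}}x}$, \cref{lem:u1u2LD}\cref{item:lem:u1u2LD} gives $\R_{W_{1}}x\in W_{2}$, so $\CC{\mathcal{S}}x=\Pro_{H_{1}}x$; a direct computation using $u_{2}=-\tfrac{\norm{u_{2}}}{\norm{u_{1}}}u_{1}$ yields $\innp{\Pro_{H_{1}}x,u_{2}}=-\tfrac{\norm{u_{2}}}{\norm{u_{1}}}\eta_{1}$, and the condition $\eta_{1}\norm{u_{2}}+\eta_{2}\norm{u_{1}}\geq 0$ from \cref{lem:u1u2LD}\cref{item:lem:u1u2LD:<0} (equivalent to $W_{1}\cap W_{2}\neq\varnothing$) places $\Pro_{H_{1}}x\in H_{1}\cap W_{2}\subseteq W_{1}\cap W_{2}$, so \cref{fact:AsubseteqB:Projection} finishes the sub-case. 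When $x\notin W_{1}$ and $\norm{x-\Pro_{W_{1}}x}>\norm{\Pro_{H_{1}}x-\Pro_{H_{2}}x}$, $\R_{W_{1}}x\notin W_{2}$ by \cref{lem:u1u2LD}\cref{item:lem:u1u2LD}, so $\R_{W_{2}}\R_{W_{1}}x=\R_{H_{2}}\R_{H_{1}}x$; if $\tfrac{\eta_{1}}{\norm{u_{1}}}=-\tfrac{\eta_{2}}{\norm{u_{2}}}$ then \cref{lem:u1u2LD}\cref{item:lem:u1u2LD:<0} gives $H_{1}=H_{2}=W_{1}\cap W_{2}$, which forces $\R_{H_{2}}\R_{H_{1}}x=x$, collapsing $\mathcal{S}(x)$ to $\{x,\R_{H_{1}}x\}$ with $\CC{\mathcal{S}}x=\Pro_{H_{1}}x=\Pro_{W_{1}\cap W_{2}}x$; otherwise $H_{1}\neq H_{2}$, so $\R_{H_{2}}\R_{H_{1}}x\neq x$ and all three members of $\mathcal{S}(x)$ are distinct and affinely dependent, giving $\CC{\mathcal{S}}x=\varnothing$ again by \cref{thm:SymForm}\cref{thm:SymForm:3:b}.

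The chief obstacle is establishing the ``three distinct points'' clause in the sub-cases where $\CC{\mathcal{S}}x=\varnothing$; the distinctness $x\neq\R_{W_{1}}x$ and $\R_{W_{1}}x\neq\R_{W_{2}}\R_{W_{1}}x$ are immediate from the hypotheses $x\notin W_{1}$ and $\R_{W_{1}}x\notin W_{2}$, but $x\neq\R_{W_{2}}\R_{W_{1}}x=\R_{H_{2}}\R_{H_{1}}x$ requires an explicit calculation. Using $u_{2}=\pm\tfrac{\norm{u_{2}}}{\norm{u_{1}}}u_{1}$ and expanding, one finds $\R_{H_{2}}\R_{H_{1}}x=x+\tfrac{2}{\norm{u_{1}}}\bigl(\mp\tfrac{\eta_{1}}{\norm{u_{1}}}\mp\tfrac{\eta_{2}}{\norm{u_{2}}}\bigr)u_{1}$ (the signs matching the sign of $\innp{u_{1},u_{2}}$), so $\R_{H_{2}}\R_{H_{1}}x=x$ if and only if $H_{1}=H_{2}$; the hypotheses of the relevant sub-cases exclude this. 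The remaining sub-case equalities $\Pro_{W_{1}}x=\Pro_{W_{1}\cap W_{2}}x$ and $\Pro_{W_{2}}x=\Pro_{W_{1}\cap W_{2}}x$ then follow from \cref{fact:AsubseteqB:Projection} once membership in $W_{1}\cap W_{2}$ is verified, and the final ``consequently'' statement $\CC{\mathcal{S}}x=\Pro_{W_{1}\cap W_{2}}x$ for $x\in W_{1}$ is a direct read-off from the sub-cases that apply when $x\in W_{1}$.
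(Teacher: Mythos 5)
Your proposal is correct and follows essentially the same route as the paper: split on the sign of $\innp{u_{1},u_{2}}$, use \cref{lem:u1u2LD} to decide which elements of $\mathcal{S}(x)$ coincide, and read off $\CC{\mathcal{S}}x$ from \cref{thm:SymForm}. The only local deviation is your verification that $x\neq\R_{W_{2}}\R_{W_{1}}x$ in the $\varnothing$ sub-cases via the explicit identity $\R_{H_{2}}\R_{H_{1}}x-x=\tfrac{2}{\norm{u_{1}}}\bigl(\pm\tfrac{\eta_{2}}{\norm{u_{2}}}\mp\tfrac{\eta_{1}}{\norm{u_{1}}}\bigr)u_{1}$, where the paper instead invokes \cref{lem:WH:mathcalH:RWinW} (in Case~2) and \cref{lem:u1u2LD}\cref{lemma:u1u2Ld:Less0:xinW2:xPh1Larger:NormLarger} (in Case~3); both arguments are valid and your computation checks out.
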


\begin{proof} 
 \emph{Case~1}:    Assume that $\innp{u_{1}, u_{2}} > 0$ and $\frac{\eta_{1}}{\norm{u_{1}}} \leq  \frac{\eta_{2}}{\norm{u_{2}}}$. Then by \cref{lem:u1u2LD}\cref{item:lem:u1u2LD:>0} and \cref{lem:WH:mathcalH:RWinW}, $W_{1} \subseteq W_{2}$ and $(\forall x\in \mathcal{H})$ $  \R_{W_{2}}\R_{W_{1}}x= \R_{W_{1}}x$.  Hence, by  \cref{thm:SymForm}\cref{thm:SymForm:2}, $(\forall x \in \mathcal{H})$ $\CC{\mathcal{S}}x =\frac{ x+ \R_{W_{1}} x }{2} =\Pro_{W_{1}}x =  \Pro_{W_{1} \cap W_{2}}x$.

 \emph{Case~2}:   Assume that $\innp{u_{1}, u_{2}} > 0$ and $\frac{\eta_{1}}{\norm{u_{1}}} >  \frac{\eta_{2}}{\norm{u_{2}}}$. Then by \cref{lem:u1u2LD}\cref{item:lem:u1u2LD:>0}, $W_{2} \subsetneqq W_{1}$ and so $W_{2}=W_{1} \cap W_{2}$. If $x \in W_{1}$, then $x =\R_{W_{1}}x$ and $\mathcal{S}(x)=\{ x, \R_{W_{2}}  x\}$. Hence, by \cref{thm:SymForm}\cref{thm:SymForm:2}, $\CC{\mathcal{S}}x =\CCO{ (\{ x, \R_{W_{2}} x  \})} =\frac{ x+ \R_{W_{2}} x }{2} =\Pro_{W_{2}}x =  \Pro_{W_{1} \cap W_{2}}x$.

Suppose that $x \notin W_{1}$. Then $x \neq \R_{W_{1}}x$.  Using 
\cref{lem:u1u2LD}\cref{item:lem:u1u2LD:u12neq0:innp>0:a}, we know that
\begin{align} \label{eq:item:them:u1u2LD:S2:>0:>:cases}
\norm{x - \Pro_{W_{1}}x} \geq \norm{\Pro_{H_{1}}x-\Pro_{H_{2}}x }  \Leftrightarrow \R_{W_{1}}x \in W_{2}.
\end{align}
If  $\norm{x - \Pro_{W_{1}}x} \geq \norm{\Pro_{H_{1}}x-\Pro_{H_{2}}x }$, then  $\R_{W_{1}}x \in W_{2}$ and $ \R_{W_{2}}\R_{W_{1}}x =\R_{W_{1}}x $.  So $\mathcal{S}=\{ \Id, \R_{W_{1}}  \}$,  which, by \cref{thm:SymForm}\cref{thm:SymForm:2}, implies that $\CC{\mathcal{S}}x =\CCO{ (\{ x, \R_{W_{1}} x  \} )} =\frac{ x+ \R_{W_{1}} x }{2} =\Pro_{W_{1}}x  =\Pro_{H_{1}}x \notin \Pro_{W_{1} \cap W_{2}}x$,
where the last inequality is because $\frac{\eta_{1}}{\norm{u_{1}}} >  \frac{\eta_{2}}{\norm{u_{2}}}$ yields $H_{1} \cap  W_{2} =\varnothing$.
Suppose $\norm{x - \Pro_{W_{1}}x} < \norm{\Pro_{H_{1}}x-\Pro_{H_{2}}x }$. Then by \cref{eq:item:them:u1u2LD:S2:>0:>:cases} $\R_{W_{1}}x \notin W_{2}$. 
Hence, apply \cref{lem:WH:mathcalH:RWinW} with $x= \R_{W_{1}}x$ and $W=W_{2}$ to obtain that $\R_{W_{2}}\R_{W_{1}}x =\R_{H_{2}}\R_{W_{1}}x \in \inte W_{2} $. Combine this with $x \in W^{c}_{1} \subseteq W^{c}_{2} $ to know that $x \neq \R_{W_{2}}\R_{W_{1}}x$. Recall that $x \neq \R_{W_{1}}x$ and $\R_{W_{1}}x \neq \R_{W_{2}}\R_{W_{1}}x$. So, $\card \{x, \R_{W_{1}}x, \R_{W_{2}}\R_{W_{1}}x \} =3$.  Moreover, by \cref{lem:u1u2LD}\cref{item:lem:u1u2LD:LD}, $x, \R_{W_{1}}x, \R_{W_{2}}\R_{W_{1}}x $ are affinely dependent. Therefore, by \cref{thm:SymForm}\cref{thm:SymForm:3:b},   $ \CC{\mathcal{S}}x = \varnothing$.

 \emph{Case~3}:  
If  $x \in W_{1} $, then   $\mathcal{S}(x) =\{x, \R_{W_{2}}x\}$ and so 
by \cref{thm:SymForm}\cref{thm:SymForm:2} and   \cref{lem:u1u2LD}\cref{item:lem:u1u2LD:u12neq0:innp<0:W12},  $\CC{\mathcal{S}} x =\CCO{( \{  x, \R_{W_{2}}x\})}=\frac{x+\R_{W_{2}}x}{2} =\Pro_{W_{2}}x=\Pro_{W_{1}\cap W_{2}}x$.

Assume that $x \notin W_{1}$.  Then, by \cref{lem:u1u2LD}\cref{item:lem:u1u2LD}, 
\begin{align} \label{eq:them:u1u2LD:S2:eq}
\norm{x - \Pro_{W_{1}}x} > \norm{\Pro_{H_{1}}x-\Pro_{H_{2}}x } \Leftrightarrow \R_{W_{1}}x  \notin W_{2}.
\end{align}
If  $\norm{x - \Pro_{W_{1}}x} \leq  \norm{\Pro_{H_{1}}x-\Pro_{H_{2}}x }$, then by \cref{eq:them:u1u2LD:S2:eq},  $\R_{W_{1}}x \in W_{2}$ and $\mathcal{S}(x) =\{x, \R_{W_{1}}x\}$. Moreover,
by \cref{thm:SymForm}\cref{thm:SymForm:2} and by \cref{lem:u1u2LD}\cref{item:lem:u1u2LD:u12neq0:innp<0:W12} with swapping $W_{1}$ and $W_{2}$, $\CC{\mathcal{S}} x =\CCO{ (\{  x, \R_{W_{1}}x\}) }=\frac{x+\R_{W_{1}}x}{2} =\Pro_{W_{1}}x=\Pro_{W_{1}\cap W_{2}}x$.
Suppose that  $\norm{x - \Pro_{W_{1}}x} > \norm{\Pro_{H_{1}}x-\Pro_{H_{2}}x }$. 
 If $\frac{\eta_{1}}{\norm{u_{1}}} =-\frac{\eta_{2}}{\norm{u_{2}}}$, then by \cref{lem:u1u2LD}\cref{item:lem:u1u2LD:<0},  $H_{1}=H_{2}$. Note that $\Pro_{W_{1}}x =\Pro_{H_{1}}x \in H_{1} \cap H_{2} \subseteq W_{1} \cap W_{2}$. Then apply \cref{fact:AsubseteqB:Projection} with $A= W_{1} \subseteq W_{2}$ and $B= W_{1}$ to obtain that $\Pro_{W_{1}}x =\Pro_{W_{1} \cap W_{2}}x$. 
 Moreover, by \cref{fact:FactWFactH} and \cref{eq:them:u1u2LD:S2:eq}, $\R_{W_{1}}x=\R_{H_{1}}x \notin W_{2}$ and $\R_{W_{2}}\R_{W_{1}}x = \R_{H_{2}}\R_{H_{1}}x =  \R_{H_{1}}\R_{H_{1}}x =x$. Hence, $\mathcal{S}(x) =\{x, \R_{W_{1}}x\}$ and $\CC{\mathcal{S}} x =\CCO{( \{  x, \R_{W_{1}}x\})} =\Pro_{W_{1}}x =\Pro_{W_{1} \cap W_{2}}x$.
Suppose $\frac{\eta_{1}}{\norm{u_{1}}} \neq -\frac{\eta_{2}}{\norm{u_{2}}}$. Then by \cref{lem:u1u2LD}\cref{item:lem:u1u2LD:<0}, $H_{1} \cap H_{2} =\varnothing$ and so $\Pro_{H_{1}}x- \Pro_{H_{2}}x \neq 0$. Using \cref{lem:u1u2LD}\cref{lemma:u1u2Ld:Less0:xinW2:xPh1Larger:NormLarger}, we know that    $\norm{\R_{W_{2}}\R_{W_{1}}x - \R_{W_{1}}x } < \norm{ x -\R_{W_{1}}x }$ and that $x \neq \R_{W_{2}}\R_{W_{1}}x $. Combine this with $x \notin W_{1}$ and \cref{eq:them:u1u2LD:S2:eq} to see that  $\card \{ x, \R_{W_{1}}x, \R_{W_{2}}\R_{W_{1}}x \} =3$. Moreover, by \cref{lem:u1u2LD}\cref{item:lem:u1u2LD:LD}, $x, \R_{W_{1}}x, \R_{W_{2}}\R_{W_{1}}x $ are affinely dependent. Hence, via \cref{thm:SymForm}\cref{thm:SymForm:3:b}, we obtain that $ \CC{\mathcal{S}}x = \varnothing$.

Altogether, the proof is complete.
\end{proof}

According to \cref{prop:H1W2:u1u2LD:P} below, if $u_{1}$ and $u_{2}$ are linearly dependent, then we don't need any projection-based algorithms  to find  $\Pro_{H_{1} \cap W_{2}}x$.
\begin{proposition} \label{prop:H1W2:u1u2LD:P}
	Assume that $u_{1} \neq 0$ and $u_{2} \neq 0$, that $u_{1}$ and $u_{2}$ are linearly dependent, and  that $H_{1} \cap W_{2} \neq \varnothing$. Then 
$H_{1} \subseteq W_{2}$. Consequently, $(\forall x \in \mathcal{H})$ $ \Pro_{H_{1}}x=\Pro_{H_{1} \cap W_{2}}x$.
\end{proposition}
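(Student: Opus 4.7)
\bigskip

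\noindent\textbf{Proof proposal.} The plan is to exploit the fact that when $u_1$ and $u_2$ are linearly dependent and both nonzero, the linear functional $\innp{\cdot, u_2}$ is constant on the hyperplane $H_1$; consequently one checkpoint $x_0 \in H_1 \cap W_2$ propagates the inequality $\innp{\cdot, u_2} \leq \eta_2$ to the whole of $H_1$.

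First, I would invoke \cref{lem:u1u2LD}\cref{item:lem:u1u2LD:u12neq0:innpneq0} (or just argue directly from linear dependence plus $u_1 \ne 0$) to produce a scalar $\beta \in \mathbb{R} \smallsetminus \{0\}$ such that $u_2 = \beta u_1$. Then for every $y \in H_1$,
\[
\innp{y, u_2} = \beta \innp{y, u_1} = \beta \eta_1,
\]
which is independent of the particular $y \in H_1$. Next, using the hypothesis $H_1 \cap W_2 \neq \varnothing$, pick any $x_0 \in H_1 \cap W_2$. Then $\beta \eta_1 = \innp{x_0, u_2} \leq \eta_2$, hence for every $y \in H_1$ we have $\innp{y, u_2} = \beta \eta_1 \leq \eta_2$, i.e., $y \in W_2$. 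This yields $H_1 \subseteq W_2$, so $H_1 \cap W_2 = H_1$, and the projection identity $\Pro_{H_1}x = \Pro_{H_1 \cap W_2}x$ follows for every $x \in \mathcal{H}$ (alternatively, one can invoke \cref{fact:AsubseteqB:Projection} with $A = H_1$ and $B = W_2$, after noting $\Pro_{H_1}x \in H_1 \subseteq W_2$).

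There is essentially no main obstacle: the statement is a short structural observation about parallel hyperplanes. The only conceptual point worth emphasising is that linear dependence of the normals $u_1, u_2$ collapses the two inequalities defining $W_1, W_2$ to the same direction, so that $H_1$ is parallel to $H_2$ (when $H_2$ exists), and the affine constraint on $H_1$ fixes the value of the $W_2$-defining functional on all of $H_1$ at once.
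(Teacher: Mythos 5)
Your proof is correct and is exactly the argument the paper intends: the paper's own proof simply states that the claim is clear from \cref{lem:u1u2LD}\cref{item:lem:u1u2LD:u12neq0:innpneq0} and the definitions in \cref{eq:WH}, and your write-up ($u_2=\beta u_1$ forces $\innp{\cdot,u_2}$ to be constant on $H_1$, so one point of $H_1\cap W_2$ certifies $H_1\subseteq W_2$) is precisely the detail being elided. No gap.
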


\begin{proof}
	This is clear from \cref{lem:u1u2LD}\cref{item:lem:u1u2LD:u12neq0:innpneq0} and definitions of $H_{1}$ and $W_{2}$ in \cref{eq:WH}.
\end{proof}

\subsubsection*{$u_{1}$ and $u_{2}$ are linearly independent}
Although the following \cref{theorem:CCS1:half-spaces} implies that   $\CC{\mathcal{S}}$ with $\mathcal{S} :=\{ \Id, \R_{W_{1}}, \R_{W_{2}} \}$ is proper, it is easy to find $W_{1}$ and $W_{2}$ in $\mathbb{R}^{2}$ such that there exists $x \in (W_{1} \cap W^{c}_{2}) \cup (W^{c}_{1} \cap W_{2})$ satisfying that $(\forall k \in \mathbb{N})$  $\CC{\mathcal{S}}^{k} x \notin W_{1} \cap W_{2}$.
\begin{proposition} \label{theorem:CCS1:half-spaces}
	Assume that $u_{1}$ and $u_{2}$ are linearly independent.   	Set $\mathcal{S} :=\{ \Id, \R_{W_{1}}, \R_{W_{2}} \}$.  Let $x \in \mathcal{H}$.
	Then exactly one of the following cases occurs:
 
	\emph{Case~1}: $x \in W_{1} \cup W_{2}$. Then $(\forall x \in W_{1} \cap W_{2})$ $\CC{\mathcal{S}} x =\Pro_{W_{1} \cap W_{2}}x$;
		$(\forall x \in W_{1} \cap W^{c}_{2})$ $\CC{\mathcal{S}} x =\Pro_{H_{2}}x$; and
		$(\forall x \in W^{c}_{1} \cap W_{2})$ $\CC{\mathcal{S}} x =\Pro_{H_{1} }x$.
			
	\emph{Case~2}:  $x \in \mathcal{H} \smallsetminus (W_{1} \cup W_{2})$. Then $\CC{\mathcal{S}} x =\Pro_{H_{1} \cap H_{2}}x$. 
 
\end{proposition}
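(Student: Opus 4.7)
The plan is to dispose of the proposition by a direct case analysis on where $x$ lies relative to $W_1$ and $W_2$, reducing $\CC{\mathcal{S}}x$ to the circumcenter of a computable finite set in each case.

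First, I would observe the universal simplification $(\forall x \in W_i^c)$ $\R_{W_i}x = \R_{H_i}x$, which is \cref{fact:FactWFactH}, and $(\forall x \in W_i)$ $\R_{W_i}x = x$. Using these, I compute $\mathcal{S}(x)$ explicitly in each case. For Case~1, if $x \in W_1 \cap W_2$, then $\mathcal{S}(x) = \{x\}$ and \cref{thm:SymForm}\cref{thm:SymForm:1} gives $\CC{\mathcal{S}}x = x = \Pro_{W_1 \cap W_2}x$. If $x \in W_1 \cap W_2^c$, then $\mathcal{S}(x) = \{x, \R_{H_2}x\}$, so by \cref{thm:SymForm}\cref{thm:SymForm:2},
\begin{equation*}
\CC{\mathcal{S}}x = \frac{x + \R_{H_2}x}{2} = \Pro_{H_2}x.
\end{equation*}
The subcase $x \in W_1^c \cap W_2$ is symmetric and yields $\CC{\mathcal{S}}x = \Pro_{H_1}x$.

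For Case~2, with $x \in \mathcal{H} \smallsetminus (W_1 \cup W_2)$, both substitutions $\R_{W_i}x = \R_{H_i}x$ apply, hence $\mathcal{S}(x) = \{x, \R_{H_1}x, \R_{H_2}x\}$. I would then set $\mathcal{S}' := \{\Id, \R_{H_1}, \R_{H_2}\}$, note $\mathcal{S}(x) = \mathcal{S}'(x)$, and conclude $\CC{\mathcal{S}}x = \CC{\mathcal{S}'}x$ from \cref{defn:cir:map}. Then I invoke \cref{prop:Tn:CCSP}\cref{prop:Tn:CCSP:T1Tm} with $T_1 = \R_{H_1}$, $T_2 = \R_{H_2}$. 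To verify its hypotheses: the linear independence of $u_1, u_2$ guarantees that the $2\times 2$ system $\innp{z, u_i} = \eta_i$ has a solution in $\spn\{u_1, u_2\}$, so $H_1 \cap H_2 = \Fix \R_{H_1} \cap \Fix \R_{H_2} \neq \varnothing$; picking any $z$ in it, $(\Fix \R_{H_i} - z)^\perp = (\ker u_i)^\perp = \spn\{u_i\}$ is one-dimensional; and $x \notin H_i$ because $H_i \subseteq W_i$ and $x \notin W_1 \cup W_2$. Thus $\CC{\mathcal{S}}x = \CC{\mathcal{S}'}x = \Pro_{H_1 \cap H_2}x$.

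The routine obstacle is bookkeeping the three subcases in Case~1 and their projector identifications; the only genuinely substantive step is the Case~2 reduction, but that is already packaged by \cref{prop:Tn:CCSP}, so the main thing to check carefully is that the assumption $H_1 \cap H_2 \neq \varnothing$ follows from the linear independence of $u_1$ and $u_2$. There is no need for anything like \cref{cor:CCSHU} or \cref{theorem:CCSHRT12} here, since \cref{prop:Tn:CCSP} is tailored exactly to the set $\{\Id, T_1, T_2\}$ appearing in this proposition.
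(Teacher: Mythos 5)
Your proposal is correct and follows essentially the same route as the paper: the identical case analysis via $\R_{W_i}x = x$ on $W_i$ and $\R_{W_i}x = \R_{H_i}x$ off $W_i$ (\cref{fact:FactWFactH}), with \cref{thm:SymForm} handling Case~1 and the reduction to reflectors across hyperplanes handling Case~2. The only cosmetic difference is that the paper routes Case~2 through \cref{them:Tn:CCSP:R:S1}\cref{item:them:Tn:CCSP:R:S1:eq} and \cref{cor:Tn:CCSP:R}\cref{item:cor:Tn:CCSP:R:S1} (and cites an external lemma for $H_1 \cap H_2 \neq \varnothing$), whereas you apply \cref{prop:Tn:CCSP} directly and verify its hypotheses, including a correct self-contained argument that linear independence of $u_1,u_2$ forces $H_1 \cap H_2 \neq \varnothing$.
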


\begin{proof}
\emph{Case~1}:  Suppose that $x \in W_{1} \cup W_{2}$.   
If  $ x \in W_{1} \cap W_{2}$, then clearly $\CC{\mathcal{S}} x =x=\Pro_{W_{1} \cap W_{2}}x$.
If  $x \in W_{1} \cap W^{c}_{2}$, then $x =\R_{W_{1}}x$, $\Pro_{W_{2}}x=\Pro_{H_{2}}x$, $\R_{W_{2}}x=\R_{H_{2}}x $, and $ \mathcal{S}(x) =\{ x, \R_{W_{1}}x, \R_{W_{2}}x \}=\{ x, \R_{W_{2}}x \} $. Hence, $\CC{\mathcal{S}} x =\CCO{ (\{ x,  \R_{W_{2}}x \}) } = \frac{ x + \R_{W_{2}}x }{2}=\Pro_{W_{2}}x=\Pro_{H_{2}}x$.  Similarly, if $x \in W^{c}_{1} \cap W_{2}$, then $\CC{\mathcal{S}} x =\Pro_{H_{1} }x$.

\emph{Case~2}:   Suppose that  $x \in \mathcal{H} \smallsetminus (W_{1} \cup W_{2})$. Then $ \mathcal{S}(x) =\{ x, \R_{W_{1}}x, \R_{W_{2}}x \}= \{ x, \R_{H_{1}}x, \R_{H_{2}}x \}$.
	Because  $H_{1} \subseteq W_{1}$ and $H_{2} \subseteq W_{2}$, we know that $x \notin H_{1} \cup H_{2}$. Note that, in view of \cite[Lemma~2.3]{Oy2020ProjectionHH}, if $u_{1}$ and $u_{2}$ are linearly independent, then $H_{1} \cap H_{2} \neq \varnothing$. Hence, by \cref{them:Tn:CCSP:R:S1}\cref{item:them:Tn:CCSP:R:S1:eq}, we obtain that $ \CC{\mathcal{S}} x =\CCO{ ( \{ x, \R_{H_{1}}x, \R_{H_{2}}x \} )}=
	\Pro_{H_{1} \cap H_{2}}x$. 	
\end{proof}

The following results are necessary to the proof of \cref{prop:CCS2:half-spaces}.  We leave the long and mechanical proof  in the  Appendix.
\begin{lemma} \label{lem:innpu1u2LID}
	Assume that $u_{1}$ and $u_{2}$ are linearly independent. Let $x \in \mathcal{H}$.
	\begin{enumerate}
		\item \label{lem:innpu1u2LID:notinW1} Assume that $x \notin W_{1}$. Then $\R_{W_{1}} x \in W_{2}  \Leftrightarrow (\innp{x,u_{2}} -\eta_{2})\norm{u_{1}}^{2} -2( \innp{x,u_{1}}-\eta_{1} )\innp{u_{1},u_{2}} \leq 0$.
		
			\item \label{lem:innpu1u2leq0:xW1CW2} Assume that $x \in W^{c}_{1} \cap W_{2}$ and that $\R_{W_{1}} x \in W_{2}$. Then $\Pro_{W_{1}}x  \in W_{1} \cap W_{2}$.
	
		\item 	\label{lem:innpu1u2geq0} 	Assume that	$\innp{u_{1},u_{2}} \geq  0$ and that $x \in W_{1} \cap W^{c}_{2}$.   Then $\Pro_{W_{2}}x  \in W_{1} \cap W_{2}$ and $\R_{W_{2}}x  \in W_{1} \cap \inte W_{2}$.
		
		\item  \label{lem:innpu1u2leq0:xW1CW2C} Assume that $\innp{u_{1},u_{2}} \leq 0$ and that  $x \in W^{c}_{1} \cap W^{c}_{2}$. Then $\R_{W_{1}}x \notin W_{2}$.
		\item \label{lemma:PHiPHji} Assume that  $\innp{u_{1},u_{2}}>0$ and that $ \Pro_{H_{1}}x \notin W_{2}$. Then $\Pro_{W_{2}}\Pro_{H_{1}}x=\Pro_{H_{2}}\Pro_{H_{1}}x \in W_{1} \cap W_{2}$.
		\item \label{lemma:PH1H2notinW2} Assume that $\innp{u_{1},u_{2}} <0$, that  $x \in  H_{2}$  and that $x \notin W_{1}$.  	 Then $ \R_{H_{1}}x  \notin W_{2}$.
	\end{enumerate}

\end{lemma}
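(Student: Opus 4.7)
The plan is to verify each of the six items by direct algebraic computation, using the explicit projection formula \cref{fact:Projec:Hyperplane}, the identity $\R_{W_i}x = x + 2\frac{\eta_i - \innp{x,u_i}}{\norm{u_i}^2}u_i$ for $x \notin W_i$ (via \cref{fact:FactWFactH}), and the reflector identity $\R_V x = 2\Pro_V x - x$. The linear independence of $u_1$ and $u_2$ plays no active role in the computations; it merely frames the setting so that the halfspaces and hyperplanes meet generically.

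For \cref{lem:innpu1u2LID:notinW1}, apply the reflection formula to express $\R_{W_1}x$ in closed form, take inner product with $u_2$, compare to $\eta_2$, and clear the positive factor $\norm{u_1}^2$; this gives the stated equivalence. Part \cref{lem:innpu1u2leq0:xW1CW2} is then immediate from convexity: since $x \notin W_1$, the midpoint formula $\Pro_{W_1}x = \frac{x + \R_{W_1}x}{2}$ holds, and with $x, \R_{W_1}x \in W_2$ convex, the midpoint lies in $W_2$ while also lying in $W_1$ by construction. Part \cref{lem:innpu1u2leq0:xW1CW2C} follows directly from the equivalence in \cref{lem:innpu1u2LID:notinW1}: under the hypotheses $\innp{x,u_2}-\eta_2 > 0$, $\innp{x,u_1}-\eta_1 > 0$, and $\innp{u_1,u_2} \leq 0$, the criterion $(\innp{x,u_2}-\eta_2)\norm{u_1}^2 - 2(\innp{x,u_1}-\eta_1)\innp{u_1,u_2}$ is a sum of a strictly positive term and a nonnegative term, hence strictly positive, so $\R_{W_1}x \notin W_2$.

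Parts \cref{lem:innpu1u2geq0}, \cref{lemma:PHiPHji}, and \cref{lemma:PH1H2notinW2} are each a single sign-tracking computation of the appropriate inner product against $u_1$ or $u_2$. For \cref{lem:innpu1u2geq0}, expand $\innp{\Pro_{W_2}x, u_1}$ using $\Pro_{W_2}x = \Pro_{H_2}x$; the bound $\innp{\Pro_{W_2}x, u_1} \leq \eta_1$ then follows from $\innp{x,u_1} \leq \eta_1$, $\eta_2 - \innp{x,u_2} < 0$, and $\innp{u_1,u_2} \geq 0$. The reflector statements are analogous, with the auxiliary observation $\innp{\R_{W_2}x, u_2} = 2\eta_2 - \innp{x,u_2} < \eta_2$ giving $\R_{W_2}x \in \inte W_2$. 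For \cref{lemma:PHiPHji}, set $y := \Pro_{H_1}x$ so $\innp{y,u_1} = \eta_1$ and use \cref{fact:FactWFactH} to identify $\Pro_{W_2}y$ with $\Pro_{H_2}y$; then
\[
\innp{\Pro_{H_2}y, u_1} = \eta_1 + \tfrac{\eta_2 - \innp{y,u_2}}{\norm{u_2}^2}\innp{u_1,u_2} < \eta_1,
\]
since $\eta_2 - \innp{y,u_2} < 0$ (as $y \notin W_2$) and $\innp{u_1,u_2} > 0$, while $\Pro_{H_2}y \in H_2 \subseteq W_2$ is automatic. For \cref{lemma:PH1H2notinW2}, evaluate $\innp{\R_{H_1}x, u_2}$: the hypothesis $x \in H_2$ gives the initial value $\eta_2$, and the correction term $2\frac{\eta_1 - \innp{x,u_1}}{\norm{u_1}^2}\innp{u_1,u_2}$ is strictly positive because both $\eta_1 - \innp{x,u_1}$ and $\innp{u_1,u_2}$ are negative.

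There is no meaningful obstacle here; the entire lemma is bookkeeping of signs. The only care required is to track strict versus non-strict inequalities so that conclusions like $\R_{W_1}x \notin W_2$ in \cref{lem:innpu1u2leq0:xW1CW2C} and $\R_{W_2}x \in \inte W_2$ in \cref{lem:innpu1u2geq0} come out with the correct strictness, and to invoke \cref{fact:FactWFactH} at each point where a projection onto $W_i$ must be replaced by a projection onto $H_i$.
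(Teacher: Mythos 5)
Your proposal is correct and follows essentially the same sign-tracking computations as the paper's appendix proof, including the reduction of (iv) to the criterion in (i) and the use of \cref{fact:FactWFactH} to pass between $W_i$- and $H_i$-projections. The only (minor) divergence is in (v), where you apply \cref{fact:Projec:Hyperplane} directly at $y=\Pro_{H_1}x$ using $\innp{y,u_1}=\eta_1$, which is a slightly cleaner route than the paper's invocation of the composite formula for $\Pro_{H_2}\Pro_{H_1}x$ from the cited reference, but the argument is the same in substance.
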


The following result elaborates that the CRM induced by  $ \mathcal{S} :=\{ \Id, \R_{W_{1}}, \R_{W_{2}}\R_{W_{1}} \}$ needs at most two steps to find a point in $W_{1}\cap W_{2}$. 
\begin{proposition}\label{prop:CCS2:half-spaces}
	Assume that $u_{1} $ and $u_{2}$ are linearly independent.   Set $\mathcal{S} :=\{ \Id, \R_{W_{1}}, \R_{W_{2}}\R_{W_{1}} \}$.  Let $x \in \mathcal{H}$. 
	Then exactly one of the following cases occurs:
	\begin{enumerate}
		\item \label{theorem:CCS2:half-spaces:W1capW2} $x \in W_{1}\cap W_{2} $. Then $\CC{\mathcal{S}}x= x= \Pro_{W_{1} \cap W_{2}}x$.
		\item \label{theorem:CCS2:half-spaces:W1capW2C}  $x \in W_{1} \cap W^{c}_{2} $. Then $\CC{\mathcal{S}}x= \Pro_{W_{2} } x=\Pro_{H_{2} } x$ and there exists $k \in \{1,2\}$ such that $\CC{\mathcal{S}}^{k}x \in W_{1} \cap W_{2}$.  

		\item \label{theorem:CCS2:half-spaces:W1CcapW2:leq} $x \in W^{c}_{1} \cap W_{2} $ and $(\innp{x,u_{2}} -\eta_{2})\norm{u_{1}}^{2} -2( \innp{x,u_{1}}-\eta_{1} )\innp{u_{1},u_{2}} \leq 0$. Then $\CC{\mathcal{S}}x= \Pro_{W_{1} } x=\Pro_{H_{1} } x = \Pro_{W_{1} \cap W_{2}}x$.  
		\item \label{theorem:CCS2:half-spaces:W1CcapW2:>}  $x \in W^{c}_{1} \cap W_{2} $ and $(\innp{x,u_{2}} -\eta_{2})\norm{u_{1}}^{2} -2( \innp{x,u_{1}}-\eta_{1} )\innp{u_{1},u_{2}} > 0$ $($this case  won't occur when $\innp{u_{1},u_{2}} \geq0 $$)$. Then $\CC{\mathcal{S}}x= \Pro_{H_{1} \cap H_{2}}x$. 
		\item \label{theorem:CCS2:half-spaces:W1CcapW2C:leq}  $x \in W^{c}_{1} \cap W^{c}_{2}  $ and $(\innp{x,u_{2}} -\eta_{2})\norm{u_{1}}^{2} -2( \innp{x,u_{1}}-\eta_{1} )\innp{u_{1},u_{2}} \leq 0$ $($this case  won't occur when $\innp{u_{1},u_{2}} \leq  0 $$)$. Then $\CC{\mathcal{S}}x= \Pro_{W_{1} } x=\Pro_{H_{1} } x$ and there exists $k \in \{1,2\}$ such that $\CC{\mathcal{S}}^{k}x \in W_{1} \cap W_{2}$. 
		\item \label{theorem:CCS2:half-spaces:W1CcapW2C:>} $x \in W^{c}_{1} \cap W^{c}_{2}  $ and $(\innp{x,u_{2}} -\eta_{2})\norm{u_{1}}^{2} -2( \innp{x,u_{1}}-\eta_{1} )\innp{u_{1},u_{2}} > 0$. Then $\CC{\mathcal{S}}x= \Pro_{H_{1} \cap H_{2}}x$.    
	\end{enumerate}
\end{proposition}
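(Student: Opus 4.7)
The plan is to reduce everything to an analysis of the three-point set $\mathcal{S}(x)=\{x,\R_{W_{1}}x,\R_{W_{2}}\R_{W_{1}}x\}$, which will collapse into a singleton, a doubleton, or genuinely three points depending on the case, and then apply \cref{thm:SymForm} together with already-established one-step convergence results. The pivotal observation is \cref{lem:innpu1u2LID}\cref{lem:innpu1u2LID:notinW1}, which asserts that for $x \notin W_{1}$ the sign condition $(\innp{x,u_{2}}-\eta_{2})\norm{u_{1}}^{2}-2(\innp{x,u_{1}}-\eta_{1})\innp{u_{1},u_{2}} \leq 0$ is equivalent to $\R_{W_{1}}x=\R_{H_{1}}x \in W_{2}$; consequently $\R_{W_{2}}\R_{W_{1}}x=\R_{W_{1}}x$ collapses the third element. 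A secondary observation is that $x \in W_{1}$ gives $\R_{W_{1}}x=x$, similarly collapsing the set.

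For each case I would first identify $\mathcal{S}(x)$ exactly. In \cref{theorem:CCS2:half-spaces:W1capW2}, $\R_{W_{1}}x=\R_{W_{2}}\R_{W_{1}}x=x$, so $\mathcal{S}(x)=\{x\}$ and \cref{thm:SymForm}\cref{thm:SymForm:1} gives $\CC{\mathcal{S}}x=x$. In \cref{theorem:CCS2:half-spaces:W1capW2C}, $\R_{W_{1}}x=x$ and $\R_{W_{2}}x=\R_{H_{2}}x$, so \cref{thm:SymForm}\cref{thm:SymForm:2} gives $\CC{\mathcal{S}}x=\tfrac{x+\R_{H_{2}}x}{2}=\Pro_{H_{2}}x=\Pro_{W_{2}}x$, via \cref{fact:FactWFactH}. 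In \cref{theorem:CCS2:half-spaces:W1CcapW2:leq}, the sign condition and \cref{lem:innpu1u2LID}\cref{lem:innpu1u2LID:notinW1} give $\R_{W_{2}}\R_{W_{1}}x=\R_{W_{1}}x$, so $\mathcal{S}(x)=\{x,\R_{H_{1}}x\}$, whence $\CC{\mathcal{S}}x=\Pro_{H_{1}}x=\Pro_{W_{1}}x$; then \cref{lem:innpu1u2LID}\cref{lem:innpu1u2leq0:xW1CW2} yields $\Pro_{W_{1}}x \in W_{1}\cap W_{2}$, and \cref{fact:AsubseteqB:Projection} (with $A=W_{1}\cap W_{2}\subseteq B=W_{1}$) upgrades this to $\Pro_{W_{1}\cap W_{2}}x$. \cref{theorem:CCS2:half-spaces:W1CcapW2C:leq} is analogous: the hypothesis and \cref{lem:innpu1u2LID}\cref{lem:innpu1u2LID:notinW1} again yield $\mathcal{S}(x)=\{x,\R_{H_{1}}x\}$ and $\CC{\mathcal{S}}x=\Pro_{H_{1}}x$. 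Cases \cref{theorem:CCS2:half-spaces:W1CcapW2:>} and \cref{theorem:CCS2:half-spaces:W1CcapW2C:>} are precisely the genuine three-point situation: $x \notin W_{1}$ combined with $\R_{W_{1}}x \notin W_{2}$ gives $\mathcal{S}(x)=\{x,\R_{H_{1}}x,\R_{H_{2}}\R_{H_{1}}x\}$, so \cref{cor:Tn:CCSP:R}\cref{item:cor:Tn:CCSP:R:S2} applied to the hyperplane reflectors yields $\CC{\mathcal{S}}x=\Pro_{H_{1}\cap H_{2}}x$; the non-occurrence parenthetical claims follow from rewriting the sign condition with $\innp{u_{1},u_{2}}$ of fixed sign.

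For the \emph{``there exists $k\in\{1,2\}$''} assertions in \cref{theorem:CCS2:half-spaces:W1capW2C} and \cref{theorem:CCS2:half-spaces:W1CcapW2C:leq}, I iterate once more. In \cref{theorem:CCS2:half-spaces:W1capW2C}, set $y:=\CC{\mathcal{S}}x=\Pro_{H_{2}}x \in W_{2}$; if $y \in W_{1}$ we are done, otherwise $y \in W_{1}^{c}\cap W_{2}$, and since $y \in H_{2}$ the sign condition at $y$ equals $-2(\innp{y,u_{1}}-\eta_{1})\innp{u_{1},u_{2}}$: when $\innp{u_{1},u_{2}}\geq 0$ case (ii) cannot force $y \notin W_{1}$ by \cref{lem:innpu1u2LID}\cref{lem:innpu1u2geq0}, and when $\innp{u_{1},u_{2}}<0$ the sign is strictly positive so case \cref{theorem:CCS2:half-spaces:W1CcapW2:>} applies to $y$, giving $\CC{\mathcal{S}}y=\Pro_{H_{1}\cap H_{2}}y \in W_{1}\cap W_{2}$. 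In \cref{theorem:CCS2:half-spaces:W1CcapW2C:leq}, $y=\Pro_{H_{1}}x \in W_{1}$; if $y \in W_{2}$ we are done, otherwise $y \in W_{1}\cap W_{2}^{c}$, and since this case requires $\innp{u_{1},u_{2}}>0$, \cref{lem:innpu1u2LID}\cref{lemma:PHiPHji} gives $\CC{\mathcal{S}}y=\Pro_{H_{2}}y=\Pro_{W_{2}}\Pro_{H_{1}}x \in W_{1}\cap W_{2}$.

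The main obstacle is bookkeeping rather than mathematical depth: one has to verify in each sub-case that the sign hypothesis correctly predicts whether $\R_{W_{1}}x$ lies in $W_{2}$, and simultaneously track which sign of $\innp{u_{1},u_{2}}$ is compatible with which combination of $x$'s location and the sign condition, in order to justify the parenthetical non-occurrence remarks and to ensure that the recursive call in the second iteration lands in a case whose output is already known to lie in $W_{1}\cap W_{2}$.
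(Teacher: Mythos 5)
Your proposal is correct and follows essentially the same route as the paper's proof: identify $\mathcal{S}(x)$ in each case via \cref{lem:innpu1u2LID}\cref{lem:innpu1u2LID:notinW1} and the collapse of reflectors, apply \cref{thm:SymForm} for the degenerate cases, invoke \cref{cor:Tn:CCSP:R}\cref{item:cor:Tn:CCSP:R:S2} for the genuine three-point cases, and handle the second iteration with \cref{lem:innpu1u2LID}\cref{lem:innpu1u2geq0} and \cref{lemma:PHiPHji}. The only cosmetic difference is that you re-derive inline (via the sign condition evaluated at $\Pro_{H_{2}}x \in H_{2}$) what the paper obtains by citing \cref{lem:innpu1u2LID}\cref{lemma:PH1H2notinW2}, which is equivalent.
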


\begin{proof}

	\cref{theorem:CCS2:half-spaces:W1capW2}: Assume that $x \in W_{1}\cap W_{2} $. Then $ \mathcal{S}(x) =\{x\}$. Hence, clearly $\CC{\mathcal{S}}x= x= \Pro_{W_{1} \cap W_{2}}x$.
	
	\cref{theorem:CCS2:half-spaces:W1capW2C}: Assume that $x \in W_{1} \cap W^{c}_{2} $. Then  $x =\R_{W_{1}}x$ and $ \mathcal{S}(x) =\{ x, \R_{W_{1}}x, \R_{W_{2}}\R_{W_{1}}x \} = \{ x, \R_{W_{2}}x\}$. Hence, by \cref{thm:SymForm}\cref{thm:SymForm:2} and \cref{fact:FactWFactH}, $	\CC{\mathcal{S}}x =\CCO{ (\{ x, \R_{W_{2}} x  \}) } =\frac{ x+ \R_{W_{2}} x }{2} =\Pro_{W_{2}}x =  \Pro_{H_{2}}x$.

	Assume that $\innp{u_{1}, u_{2}} \geq 0$. Then by \cref{lem:innpu1u2LID}\cref{lem:innpu1u2geq0},  $\Pro_{W_{2}}x \in W_{1} \cap W_{2}$. Hence, by \cref{fact:AsubseteqB:Projection},   $\CC{\mathcal{S}}x= \Pro_{W_{2}}x= \Pro_{W_{1} \cap W_{2}}x$.
	
	Suppose $\innp{u_{1}, u_{2}} < 0$. We shall prove that $\CC{\mathcal{S}}^{2}x \in W_{1} \cap W_{2}$. 	
	If $ \CC{\mathcal{S}}x=  \Pro_{H_{2}}x \in W_{1}$, then we are done. 	
	Assume $\Pro_{H_{2}}x \notin W_{1}$. Then apply \cref{lem:innpu1u2LID}\cref{lemma:PH1H2notinW2} with $x= \Pro_{H_{2}}x$ to obtain that $\R_{H_{1}}\Pro_{H_{2}}x \notin W_{2}$. 
	Now $ \Pro_{H_{2}}x \notin  W_{1}$ and  $\R_{H_{1}}\Pro_{H_{2}}x \notin W_{2}$ imply that  $ \Pro_{H_{2}}x \notin  H_{1}  $ and $\R_{H_{1}}\Pro_{H_{2}}x \notin H_{2} $, and that $\CCO{( \{\Pro_{H_{2}}x, \R_{W_{1}}\Pro_{H_{2}}x,   \R_{W_{2}}\R_{W_{1}}\Pro_{H_{2}}x\})} =\CCO{( \{\Pro_{H_{2}}x, \R_{H_{1}}\Pro_{H_{2}}x,   \R_{H_{2}}\R_{H_{1}}\Pro_{H_{2}}x\})}$. Using these results and applying  \cref{cor:Tn:CCSP:R}\cref{item:cor:Tn:CCSP:R:S2} with $x=\Pro_{H_{2}}x$,  we obtain that  $\CCO{( \{\Pro_{H_{2}}x, \R_{H_{1}}\Pro_{H_{2}}x,   \R_{H_{2}}\R_{H_{1}}\Pro_{H_{2}}x\})} =\Pro_{H_{1} \cap H_{2}}\Pro_{H_{2}}x $.
	Moreover, using \cite[Lemma~2.3]{BOyW2020BAM}, we know that $\Pro_{H_{1} \cap H_{2}}\Pro_{H_{2}}x = \Pro_{H_{1} \cap H_{2}}x$. Altogether,   
$\CC{\mathcal{S}}^{2}x= \CC{\mathcal{S}} (\CC{\mathcal{S}}x) = \CC{\mathcal{S}} (\Pro_{H_{2}}x) = \Pro_{H_{1} \cap H_{2}}x \in W_{1} \cap W_{2}$.
	
	\cref{theorem:CCS2:half-spaces:W1CcapW2:leq}: Assume that $x \in W^{c}_{1} \cap W_{2} $ and $(\innp{x,u_{2}} -\eta_{2})\norm{u_{1}}^{2} -2( \innp{x,u_{1}}-\eta_{1} )\innp{u_{1},u_{2}} \leq 0$.
Note that \cref{lem:innpu1u2LID}\cref{lem:innpu1u2LID:notinW1} yields    $ \R_{W_{1}}x \in W_{2}$.  Then  $\R_{W_{2}}\R_{W_{1}}x=\R_{W_{1}}x$, and $ \mathcal{S}(x) =\{ x, \R_{W_{1}}x, \R_{W_{2}}\R_{W_{1}}x \} = \{ x, \R_{W_{1}}x\}$. Hence, by \cref{thm:SymForm}\cref{thm:SymForm:2} and \cref{fact:FactWFactH}, $\CC{\mathcal{S}}x =\CCO{ (\{ x, \R_{W_{1}} x  \}) } =\frac{ x+ \R_{W_{1}} x }{2} =\Pro_{W_{1}}x =  \Pro_{H_{1}}x$.
On the other hand, $x \in W^{c}_{1} \cap W_{2} $ and $ \R_{W_{1}}x \in W_{2}$. Moreover, via \cref{lem:innpu1u2LID}\cref{lem:innpu1u2leq0:xW1CW2}, $\Pro_{W_{1}}x  \in W_{1} \cap W_{2}$. Combine these results with \cref{fact:AsubseteqB:Projection} to obtain that $\CC{\mathcal{S}}x= \Pro_{W_{1}}x= \Pro_{W_{1} \cap W_{2}}x$.
	
	\cref{theorem:CCS2:half-spaces:W1CcapW2:>}: Assume that $x \in W^{c}_{1} \cap W_{2} $ and $(\innp{x,u_{2}} -\eta_{2})\norm{u_{1}}^{2} -2( \innp{x,u_{1}}-\eta_{1} )\innp{u_{1},u_{2}} > 0$.
	By \cref{lem:innpu1u2LID}\cref{lem:innpu1u2LID:notinW1}, we have that  $ \R_{W_{1}}x \notin W_{2}$. Apply \cref{lem:innpu1u2LID}\cref{lem:innpu1u2geq0} with swapping $W_{1}  $ and $W_{2} $ to know that this case won't happen when $\innp{u_{1},u_{2}} \geq 0$. 
		Moreover, because $x \in W^{c}_{1} \cap W_{2} $ and $ \R_{W_{1}}x \notin W_{2}$,  by \cref{fact:FactWFactH}, $ \mathcal{S}(x) =\{ x, \R_{W_{1}}x, \R_{W_{2}}\R_{W_{1}}x \} = \{ x, \R_{H_{1}}x, \R_{H_{2}}\R_{H_{1}}x\}$. Hence, by \cref{cor:Tn:CCSP:R}\cref{item:cor:Tn:CCSP:R:S2}, $\CC{\mathcal{S}}x= \Pro_{H_{1} \cap H_{2}}x$. 
	
	\cref{theorem:CCS2:half-spaces:W1CcapW2C:leq}: Assume that $x \in W^{c}_{1} \cap W^{c}_{2}  $ and $(\innp{x,u_{2}} -\eta_{2})\norm{u_{1}}^{2} -2( \innp{x,u_{1}}-\eta_{1} )\innp{u_{1},u_{2}} \leq 0$.  In view of \cref{lem:innpu1u2LID}\cref{lem:innpu1u2LID:notinW1},  $ \R_{W_{1}}x \in W_{2}$. By \cref{lem:innpu1u2LID}\cref{lem:innpu1u2leq0:xW1CW2C}, this case won't happen when  $\innp{u_{1},u_{2}} \leq 0$. Hence, we have $\innp{u_{1},u_{2}} > 0$.
	
	Because $x \in W^{c}_{1} \cap W^{c}_{2} $ and $ \R_{W_{1}}x \in W_{2}$,  we have, by \cref{fact:FactWFactH}, that $\R_{W_{1}}x=\R_{H_{1}}x$, $\R_{W_{2}}\R_{W_{1}}x=\R_{W_{1}}x $, and, 
	by \cref{thm:SymForm}\cref{thm:SymForm:2}, $	\CC{\mathcal{S}}x =\CCO{ (\{ x, \R_{W_{1}} x  \})} =\frac{ x+ \R_{W_{1}} x }{2} =\Pro_{W_{1}}x =  \Pro_{H_{1}}x$.

	If $\Pro_{H_{1}}x \in W_{2} $, then we are done. Assume that $\Pro_{H_{1}}x \notin W_{2} $.
	Then, 
	$
	\CC{\mathcal{S}}^{2}x=\CC{\mathcal{S}} ( \CC{\mathcal{S}}x ) =\CC{\mathcal{S}} ( \Pro_{H_{1}}x)  =\CCO{ (\{ \Pro_{H_{1}}x,  \R_{W_{1}}\Pro_{H_{1}}x, \R_{W_{2}}\R_{W_{1}}\Pro_{H_{1}}x\})} = \CCO{( \{ \Pro_{H_{1}}x,   \R_{H_{2}}\Pro_{H_{1}}x\})}  =\tfrac{\Pro_{H_{1}}x+   \R_{H_{2}}\Pro_{H_{1}}x}{2}=\Pro_{H_{2}}\Pro_{H_{1}}x.
	$
	Hence, by \cref{lem:innpu1u2LID}\cref{lemma:PHiPHji}, $\CC{\mathcal{S}}^{2}x =\Pro_{H_{2}}\Pro_{H_{1}}x \in W_{1} \cap W_{2}$. Altogether, 
	\cref{theorem:CCS2:half-spaces:W1CcapW2C:leq} holds.
	
	\cref{theorem:CCS2:half-spaces:W1CcapW2C:>}: Assume that $x \in W^{c}_{1} \cap W^{c}_{2}  $ and $(\innp{x,u_{2}} -\eta_{2})\norm{u_{1}}^{2} -2( \innp{x,u_{1}}-\eta_{1} )\innp{u_{1},u_{2}} > 0$.
By \cref{lem:innpu1u2LID}\cref{lem:innpu1u2LID:notinW1},   $ \R_{W_{1}}x \notin W_{2}$. Hence, via \cref{cor:Tn:CCSP:R}\cref{item:cor:Tn:CCSP:R:S2}, $\CC{\mathcal{S}}x =\CCO{ (\{ \Pro_{H_{1}}x,  \R_{H_{1}}x, \R_{H_{2}}\R_{H_{1}}x\})}= \Pro_{H_{1} \cap H_{2}}x$.

	In conclusion, the proof is complete.
\end{proof}
In the following result, we reduce the CRM associated with  one hyperplane and one  halfspace to the CRM associated with   two hyperplanes which was considered in the first subsection of 
\cref{sec:CRM:hyperplanes}. 
\begin{lemma} \label{lem:u1u2LD:H1W2}
	 Assume that $u_{1} $ and $u_{2}$ are linearly independent. 
	Set $\mathcal{S}_{1} :=\{ \Id, \R_{H_{1}}, \R_{W_{2}} \}, \mathcal{S}_{2} :=\{ \Id, \R_{H_{1}}, \R_{W_{2}}\R_{H_{1}}  \}$ and $\mathcal{S}_{3} :=\{ \Id, \R_{W_{2}}, \R_{H_{1}} \R_{W_{2}} \}$.  Let $x \in \mathcal{H}$. 
	Then the following statements hold.
	\begin{enumerate}
		\item \label{lem:u1u2LD:H1W2:CCS1} If $x \in W_{2}$, then $\CC{\mathcal{S}_{1}} x =\Pro_{H_{1}}x$; otherwise, $\CC{\mathcal{S}_{1}} x =\CCO{ (\{ x, \R_{H_{1}}x, \R_{H_{2}} x\}) }$.
		\item \label{lem:u1u2LD:H1W2:CCS2}  If $\R_{H_{1}}x \in W_{2}$, then $\CC{\mathcal{S}_{2}} x =\Pro_{H_{1}}x$; otherwise, $\CC{\mathcal{S}_{2}} x =\CCO{ (\{ x, \R_{H_{1}}x, \R_{H_{2}} \R_{H_{1}}x\}) }$. 

		\item \label{lem:u1u2LD:H1W2:CCS3} If $x \in W_{2}$, then $\CC{\mathcal{S}_{3}} x =\Pro_{H_{1}}x$; otherwise, $\CC{\mathcal{S}_{3}} x =\CCO{ (\{ x, \R_{H_{2}}x, \R_{H_{1}} \R_{H_{2}}x\})  }$.  
	\end{enumerate}
\end{lemma}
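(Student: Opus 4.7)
\textbf{Proof plan for \cref{lem:u1u2LD:H1W2}.} My plan is a direct case analysis based on whether the relevant point lies in $W_{2}$, using \cref{fact:FactWFactH} to translate the halfspace reflector $\R_{W_{2}}$ into the hyperplane reflector $\R_{H_{2}}$ when applicable, and then reading off the circumcenter from \cref{defn:cir:map} (or, in the two-point case, from \cref{thm:SymForm}\cref{thm:SymForm:2}). Throughout, I will use that for any $y \in \mathcal{H}$, $\R_{W_{2}}y = y$ when $y \in W_{2}$ and $\R_{W_{2}}y = \R_{H_{2}}y$ when $y \notin W_{2}$, and the analogous elementary fact $\frac{y + \R_{H_{1}}y}{2} = \Pro_{H_{1}}y$.

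For \cref{lem:u1u2LD:H1W2:CCS1}, I examine $\mathcal{S}_{1}(x) = \{x, \R_{H_{1}}x, \R_{W_{2}}x\}$. If $x \in W_{2}$, then $\R_{W_{2}}x = x$, so $\mathcal{S}_{1}(x) = \{x, \R_{H_{1}}x\}$, and \cref{thm:SymForm}\cref{thm:SymForm:1}--\cref{thm:SymForm:2} give $\CC{\mathcal{S}_{1}}x = \frac{x + \R_{H_{1}}x}{2} = \Pro_{H_{1}}x$ (both when $x \in H_{1}$ and when $x \notin H_{1}$). If $x \notin W_{2}$, then \cref{fact:FactWFactH} yields $\R_{W_{2}}x = \R_{H_{2}}x$, whence $\mathcal{S}_{1}(x) = \{x, \R_{H_{1}}x, \R_{H_{2}}x\}$, and $\CC{\mathcal{S}_{1}}x = \CCO(\{x, \R_{H_{1}}x, \R_{H_{2}}x\})$ directly from \cref{defn:cir:map}.

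For \cref{lem:u1u2LD:H1W2:CCS2}, the key observation is that the membership question for $\R_{W_{2}}$ now concerns the point $\R_{H_{1}}x$, not $x$: since $\mathcal{S}_{2}(x) = \{x, \R_{H_{1}}x, \R_{W_{2}}\R_{H_{1}}x\}$, the case $\R_{H_{1}}x \in W_{2}$ gives $\R_{W_{2}}\R_{H_{1}}x = \R_{H_{1}}x$ and reduces $\mathcal{S}_{2}(x)$ to $\{x, \R_{H_{1}}x\}$, while the case $\R_{H_{1}}x \notin W_{2}$ gives $\R_{W_{2}}\R_{H_{1}}x = \R_{H_{2}}\R_{H_{1}}x$ by \cref{fact:FactWFactH}. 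The conclusion again follows from \cref{thm:SymForm}\cref{thm:SymForm:2} in the first case and from \cref{defn:cir:map} in the second.

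For \cref{lem:u1u2LD:H1W2:CCS3}, I look at $\mathcal{S}_{3}(x) = \{x, \R_{W_{2}}x, \R_{H_{1}}\R_{W_{2}}x\}$. If $x \in W_{2}$, then $\R_{W_{2}}x = x$, so $\R_{H_{1}}\R_{W_{2}}x = \R_{H_{1}}x$ and $\mathcal{S}_{3}(x) = \{x, \R_{H_{1}}x\}$, giving $\CC{\mathcal{S}_{3}}x = \Pro_{H_{1}}x$ as before. If $x \notin W_{2}$, then $\R_{W_{2}}x = \R_{H_{2}}x$, whence $\R_{H_{1}}\R_{W_{2}}x = \R_{H_{1}}\R_{H_{2}}x$ and $\mathcal{S}_{3}(x) = \{x, \R_{H_{2}}x, \R_{H_{1}}\R_{H_{2}}x\}$, yielding the stated formula. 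There is essentially no obstacle here beyond carefully tracking which reflector is being applied to which argument; the linear-independence hypothesis on $u_{1}, u_{2}$ is not needed for the statement itself but ensures that the resulting three-point circumcenter in the \enquote{otherwise} cases is nontrivial and fits the framework used elsewhere in \cref{sec:CRM:hyperplanes}.
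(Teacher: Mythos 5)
Your proposal is correct and follows essentially the same route as the paper, which likewise observes that $x \in W_{2} \Leftrightarrow x = \R_{W_{2}}x$ and $\R_{H_{1}}x \in W_{2} \Leftrightarrow \R_{H_{1}}x = \R_{W_{2}}\R_{H_{1}}x$ and then invokes \cref{fact:FactWFactH}, \cref{thm:SymForm}\cref{thm:SymForm:2}, and \cref{defn:cir:map}; your write-up just spells out the case analysis in more detail. (One small caution: the linear-independence hypothesis does do work in the \enquote{otherwise} branches, since it guarantees $u_{2} \neq 0$, which is needed for \cref{fact:FactWFactH} to apply.)
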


\begin{proof}
Note that $x \in W_{2} \Leftrightarrow x = \R_{W_{2}} x$ and that $\R_{H_{1}}x \in W_{2} \Leftrightarrow \R_{H_{1}}x = \R_{W_{2}} \R_{H_{1}}x$.
	Therefore, the required results follow easily from \cref{thm:SymForm}\cref{thm:SymForm:2} and \cref{defn:cir:map}.		
\end{proof}

To end this section, below we summarize results on the finite convergence of CRMs that we obtained in this section. 
\begin{theorem}\label{theorem:ConclusionWH}
For every $ i \in \{1,2\}$,   $u_{i}$  is in $\mathcal{H}$,   $\eta_{i}$ is in $\mathbb{R}$, and set 
\begin{align*}
W_{i} := \{x \in \mathcal{H} ~:~ \innp{x,u_{i}} \leq \eta_{i} \} \quad \text{and} \quad H_{i} := \{x \in \mathcal{H} ~:~ \innp{x,u_{i}} = \eta_{i} \}.
\end{align*}
	\begin{enumerate}
		\item 	Assume that $H_{1} \cap H_{2} \neq \varnothing$. Then the CRM induced by $\mathcal{S} :=\{ \Id, \R_{H_{1}}, \R_{H_{2}}\R_{H_{1}} \}$ needs at most $3$ steps to find the best approximation point $\Pro_{H_{1} \cap H_{2}}x$ (see \cref{them:Tn:CCSP:R:S2}).
		\item Assume that  $u_{1}$ and $u_{2}$ are linearly dependent and that $W_{1} \cap W_{2} \neq \varnothing$.   Then the CRM induced by  $\mathcal{S} :=\{ \Id, \R_{W_{1}}, \R_{W_{2}} \}$ with initial point $x \in W_{1} \cup W_{2}$ finds the  best approximation point $\Pro_{W_{1} \cap W_{2}}x$ in one step   (see \cref{lem:CCSu10u20} and \cref{prop:u1u2LD:S1:neq0}).
		\item  Assume that  $u_{1}$ and $u_{2}$ are linearly dependent and that $W_{1} \cap W_{2} \neq \varnothing$.  Then the CRM induced by  $\mathcal{S} :=\{ \Id, \R_{W_{1}}, \R_{W_{2}}\R_{W_{1}} \}$ with initial point $x \in W_{1} $ finds the  best approximation point $\Pro_{W_{1} \cap W_{2}}x$ in one step   (see \cref{lem:CCSu10u20} and  \cref{prop:u1u2LD:S2:<0}). 
		\item 	Assume that $u_{1} $ and $ u_{2}$ are linearly independent.   Then the CRM induced by  $\mathcal{S} :=\{ \Id, \R_{W_{1}}, \R_{W_{2}}\R_{W_{1}} \}$ with initial point $x \in \mathcal{H} $ finds a feasibility point in $ W_{1} \cap W_{2}$ in at most two steps   (see \cref{prop:CCS2:half-spaces}).
	\item 	Assume that $H_{1} \cap W_{2} \neq \varnothing$.  If $u_{1} = u_{2}=0$, then $(\forall x \in \mathcal{H})$ $\CC{\mathcal{S}} x =\Pro_{H_{1} \cap W_{2}}x$ with  $\mathcal{S}$ being $\{\Id, \R_{H_{1}}, \R_{W_{2}} \}$, $\{\Id, \R_{H_{1}}, \R_{W_{2}}\R_{H_{1}} \}$, or $ \{\Id, \R_{W_{2}}, \R_{H_{1}} \R_{W_{2}}\}$;
  If $u_{1}$ and $u_{2}$ are nonzero and linearly dependent, then $(\forall x \in \mathcal{H})$ $ \Pro_{H_{1}}x=\Pro_{H_{1} \cap W_{2}}x$; otherwise,  the CRM induced by $\mathcal{S} :=\{ \Id, \R_{H_{1}}, \R_{H_{2}}\R_{H_{1}} \}$ finds the feasibility point $\Pro_{H_{1} \cap H_{2}}x \in H_{1} \cap W_{2}$   in  $3$ steps 
  (see  \cref{them:Tn:CCSP:R:S2},
		\cref{prop:H1W2:u1u2LD:P} and \cref{lem:CCSu10u20}).
	\end{enumerate}
\end{theorem}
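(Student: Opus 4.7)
The plan is to prove \cref{theorem:ConclusionWH} by assembling it directly from the case‑by‑case convergence results already established in \cref{sec:CRM:hyperplanes}; no new argument is required, only careful bookkeeping of which subcase is covered by which proposition. I will dispatch the five items in order.

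For item (i), I simply invoke \cref{them:Tn:CCSP:R:S2}, whose conclusion is literally the $3$‑step best‑approximation statement for $\mathcal{S}:=\{\Id,\R_{H_{1}},\R_{H_{2}}\R_{H_{1}}\}$ when $H_{1}\cap H_{2}\neq\varnothing$. Items (ii) and (iii) each split into the subcase where some $u_{i}$ is zero and the subcase where both are nonzero. In the zero subcase, \cref{lem:CCSu10u20}\cref{lem:CCS:u1eq0:or:u2eq0} already delivers one‑step convergence of both $\CC{\mathcal{S}_{1}}$ and $\CC{\mathcal{S}_{2}}$ at every initial point. In the nonzero subcase, \cref{prop:u1u2LD:S1:neq0} covers (ii) verbatim via its concluding line for $x\in W_{1}\cup W_{2}$, and \cref{prop:u1u2LD:S2:<0} covers (iii) via its concluding line for $x\in W_{1}$. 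In particular, the ``Consequently'' sentences of those two propositions are precisely what items (ii) and (iii) claim.

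For item (iv), I apply \cref{prop:CCS2:half-spaces}, which handles all six exhaustive cases according to the memberships of $x$ in $W_{1},W_{2}$ and to the sign of $(\innp{x,u_{2}}-\eta_{2})\norm{u_{1}}^{2}-2(\innp{x,u_{1}}-\eta_{1})\innp{u_{1},u_{2}}$. In every case either $\CC{\mathcal{S}}x\in W_{1}\cap W_{2}$ outright or $\CC{\mathcal{S}}^{2}x\in W_{1}\cap W_{2}$, which gives the $2$‑step feasibility conclusion for linearly independent $u_{1},u_{2}$.

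For item (v) I branch into three subcases. If $u_{1}=u_{2}=0$, then $H_{1}\cap W_{2}\neq\varnothing$ forces $H_{1}=W_{2}=\mathcal{H}$, and \cref{lem:CCSu10u20}\cref{lem:CCSu10u20:HW} yields the identity $\CC{\mathcal{S}}x=x=\Pro_{H_{1}\cap W_{2}}x$ for all three choices of $\mathcal{S}$ listed. If $u_{1},u_{2}$ are both nonzero and linearly dependent, \cref{prop:H1W2:u1u2LD:P} directly gives $H_{1}\subseteq W_{2}$, hence $\Pro_{H_{1}}x=\Pro_{H_{1}\cap W_{2}}x$ with zero iterations. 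Finally, if $u_{1},u_{2}$ are linearly independent, then by \cite[Lemma~2.3]{Oy2020ProjectionHH} we have $H_{1}\cap H_{2}\neq\varnothing$, so \cref{them:Tn:CCSP:R:S2} produces $\Pro_{H_{1}\cap H_{2}}x$ in at most three iterations of $\CC{\{\Id,\R_{H_{1}},\R_{H_{2}}\R_{H_{1}}\}}$; since $H_{1}\cap H_{2}\subseteq H_{1}\cap W_{2}$, this output is a feasibility point of $H_{1}\cap W_{2}$, as claimed.

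There is no substantive obstacle: the statement is a summary/compilation theorem, and each bullet matches one of the propositions or lemmas proved earlier in this section. The only care needed is to track, for items (ii), (iii) and (v), the degenerate subcase in which some $u_{i}$ vanishes, which is exactly the role of \cref{lem:CCSu10u20}; the non‑degenerate algebraic cases are already exhausted by \cref{prop:u1u2LD:S1:neq0,prop:u1u2LD:S2:<0,prop:CCS2:half-spaces,prop:H1W2:u1u2LD:P,them:Tn:CCSP:R:S2}.
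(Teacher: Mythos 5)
Your proposal is correct and takes essentially the same route as the paper: \cref{theorem:ConclusionWH} is a compilation statement whose justification is precisely the assembly of citations you give, with \cref{lem:CCSu10u20} absorbing the degenerate subcases in which some $u_{i}$ vanishes and the ``Consequently'' clauses of \cref{prop:u1u2LD:S1:neq0,prop:u1u2LD:S2:<0} and the case analysis of \cref{prop:CCS2:half-spaces} supplying items (ii)--(iv) verbatim. The one residual wrinkle — an artifact of the theorem's own wording rather than of your argument — is that in item (v) the ``otherwise'' branch as literally phrased also covers the case where exactly one $u_{i}$ is zero, which is handled by \cref{lem:CCSu10u20}\cref{lem:CCSu10u20:HW} rather than by the linearly independent argument you give there.
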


\section*{Acknowledgements}
The author is grateful to the anonymous referee and the editor for their helpful comments which improved the presentation of this work.

\addcontentsline{toc}{section}{References}

\bibliographystyle{abbrv}

\appendix
\section*{Appendix} \label{sec:Appendix}
\addcontentsline{toc}{section}{Appendix}
Below, we provide proofs of \Cref{lem:u1u2LD,lem:innpu1u2LID}.

\subsection*{Proof of \cref{lem:u1u2LD}}
 
\begin{proof}
	\cref{item:lem:u1u2LD:u12neq0:innpneq0}: Because $u_{1}  $ and $u_{2}  $ are nonzero and linearly dependent, due to \cite[Fact~2.1]{Oy2020ProjectionHH},    either $	  u_{2} = \frac{\norm{u_{2}}}{\norm{u_{1}}} u_{1} $ or $u_{2} = -\frac{\norm{u_{2}}}{\norm{u_{1}}} u_{1}$, which implies required results in \cref{item:lem:u1u2LD:u12neq0:innpneq0}.

	\cref{item:lem:u1u2LD:>0}: These results follow easily from \cref{item:lem:u1u2LD:u12neq0:innpneq0} and \cref{eq:WH}.
	
	\cref{item:lem:u1u2LD:<0}: Due to \cref{eq:WH} and \cref{item:lem:u1u2LD:u12neq0:innpneq0}, clearly  $\frac{\eta_{1}}{\norm{u_{1}}} =  -\frac{\eta_{2}}{\norm{u_{2}}} \Leftrightarrow  H_{1} =H_{2}$; and    $\frac{\eta_{1}}{\norm{u_{1}}} \neq -\frac{\eta_{2}}{\norm{u_{2}}}  \Leftrightarrow  H_{1} \cap H_{2} =\varnothing$. 
	
	In view of \cref{item:lem:u1u2LD:u12neq0:innpneq0}, $\innp{u_{1}, u_{2}} <0$ implies that $u_{2} = -\frac{\norm{u_{2}}}{\norm{u_{1}}} u_{1}$.
	This and  \cref{eq:WH} yield that 
	\begin{align} \label{item:lem:u1u2LD:u12neq0:innp<0:W2}\tag{1}
	W_{2} =\{ x \in \mathcal{H} ~:~ \innp{x, u_{2}} \leq \eta_{2} \} =   \left\{ x \in \mathcal{H} ~:~ \innp{x, u_{1}} \geq -\frac{\norm{u_{1}}}{\norm{u_{2}}} \eta_{2} \right\}.
	\end{align}
Employing \cref{eq:WH} again, we yield $W_{1} = \{ x \in \mathcal{H} ~:~ \innp{x, u_{1}} \leq \eta_{1} \}$. Hence, 
	\begin{align} \label{eq:item:lem:u1u2LD:u12neq0:innp<0}  \tag{2}
	W_{1}\cap W_{2} \neq \varnothing \Leftrightarrow -\frac{\norm{u_{1}}}{\norm{u_{2}}} \eta_{2}  \leq \eta_{1} \Leftrightarrow \eta_{1}\norm{u_{2}} +\eta_{2} \norm{u_{1}} \geq 0.
	\end{align} 
	Assume that $W_{1}\cap W_{2} \neq \varnothing$. Notice that $	x \notin W_{2} \stackrel{\cref{item:lem:u1u2LD:u12neq0:innp<0:W2}}{\Leftrightarrow} \innp{x, u_{1}} < -\frac{\norm{u_{1}}}{\norm{u_{2}}} \eta_{2}  \stackrel{\cref{eq:item:lem:u1u2LD:u12neq0:innp<0} }{\Rightarrow} \innp{x, u_{1}} < \eta_{1} \Leftrightarrow x \in \inte W_{1}$ and that $x \notin W_{1}  \Leftrightarrow \innp{x, u_{1}} > \eta_{1}  \stackrel{\cref{eq:item:lem:u1u2LD:u12neq0:innp<0} }{\Rightarrow} \innp{x, u_{1}} > -\frac{\norm{u_{1}}}{\norm{u_{2}}} \eta_{2} \stackrel{\cref{item:lem:u1u2LD:u12neq0:innp<0:W2}}{\Leftrightarrow} x \in \inte W_{2}$.
	Hence, $\mathcal{H}=\inte W_{1}\cup W_{2} = W_{1} \cup \inte W_{2}$.

	Because $u_{2} = -\frac{\norm{u_{2}}}{\norm{u_{1}}} u_{1}$, $(\forall x \in H_{1})$  $	\innp{x, u_{2}} -\eta_{2}= -\frac{\norm{u_{2}}}{\norm{u_{1}}}  \innp{x, u_{1}} -\eta_{2}  \stackrel{\cref{eq:WH}}{=}  -\frac{\norm{u_{2}}}{\norm{u_{1}}}  \eta_{1} -\eta_{2}  \stackrel{\cref{eq:item:lem:u1u2LD:u12neq0:innp<0}}{\leq} 0$,
	which implies that  $H_{1} \subseteq W_{2}$. The proof of $H_{2} \subseteq W_{1}$ is similar.

	\cref{item:lem:u1u2LD:u12neq0:innp>0:a}: 
In consideration of \cref{item:lem:u1u2LD:u12neq0:innpneq0}, $u_{2} =  \frac{\norm{u_{2}}}{\norm{u_{1}}} u_{1}$.  Combine this with   \cref{fact:Projec:Hyperplane}, the assumption $ \frac{\eta_{1}}{\norm{u_{1}}} \geq \frac{\eta_{2}}{\norm{u_{2}}} $, and some easy algebra, to  observe  that  
		\begin{align}\label{eq:item:lem:u1u2LD:u12neq0:innp>0:a} \tag{3}
		(\forall x \in \mathcal{H}) \quad 	\norm{\Pro_{H_{1}}x - \Pro_{H_{2}}x}  = \Norm{ x + \frac{\eta_{1} - \innp{x,u_{1}}}{\norm{u_{1}}^{2} }  u_{1}  -\Big( x + \frac{\eta_{2} - \innp{x,u_{2}}}{\norm{u_{2}}^{2} }  u_{2}  \Big) } 
		 = \frac{\eta_{1}}{\norm{u_{1}}} -\frac{\eta_{2}}{\norm{u_{2}}}. 
		\end{align}

	Let $x \in \mathcal{H} \smallsetminus W_{1}$. Then via \cref{eq:WH}   and    \cref{fact:FactWFactH}, $\innp{x, u_{1}} >\eta_{1}$ and  $ \Pro_{W_{1}}x = \Pro_{H_{1}}x = x + \frac{\eta_{1} - \innp{x,u_{1}}}{\norm{u_{1}}^{2} }  u_{1} $. Combining these results with  \cref{eq:item:lem:u1u2LD:u12neq0:innp>0:a}, we derive that
	\begin{align} \label{item:lemma:u1u2Ld:larger0:PH1PH2:notinW2:x-PW1} \tag{4}
	\norm{x - \Pro_{W_{1}}x} \geq  \norm{\Pro_{H_{1}}x-\Pro_{H_{2}}x } \Leftrightarrow  \frac{ \innp{x,u_{1}} -\eta_{1} }{\norm{u_{1}} }   \geq  \frac{\eta_{1}}{\norm{u_{1}}} -\frac{\eta_{2}}{\norm{u_{2}}}.
	\end{align}
	On the other hand, 	
	\begin{subequations}  
		\begin{align*}
		\R_{W_{1}} x\in W_{2} 
		& \Leftrightarrow	\innp{\R_{W_{1}}x, u_{2}} -\eta_{2} \leq 0  \quad (\text{by \cref{eq:WH}})\\
		& \Leftrightarrow \Innp{2\left(x + \frac{\eta_{1} - \innp{x,u_{1}}}{\norm{u_{1}}^{2} }  u_{1} \right)-x, u_{2}} -\eta_{2} \leq 0  \quad (\text{by  \cref{fact:FactWFactH}})\\
		&\Leftrightarrow \Innp{x+2 \frac{\eta_{1} - \innp{x,u_{1}}}{\norm{u_{1}}^{2} }  u_{1} ,  \frac{\norm{u_{2}}}{\norm{u_{1}}}u_{1}} -\eta_{2} \leq 0 \quad (\text{by $u_{2} =  \frac{\norm{u_{2}}}{\norm{u_{1}}} u_{1}$})\\
		& \Leftrightarrow  \norm{u_{2}}  \left( \innp{x, u_{1}} +2 (\eta_{1} - \innp{x,u_{1}})\right)  -\norm{u_{1}} \eta_{2} \leq 0\\
		& \Leftrightarrow    \frac{ \innp{x,u_{1}} -\eta_{1} }{\norm{u_{1}} }   \geq  \frac{\eta_{1}}{\norm{u_{1}}} -\frac{\eta_{2}}{\norm{u_{2}}},
		\end{align*}
	\end{subequations}
	which, combining with \cref{item:lemma:u1u2Ld:larger0:PH1PH2:notinW2:x-PW1}, leads that $(x \in \mathcal{H} \smallsetminus W_{1})$ $ \norm{x - \Pro_{W_{1}}x} \geq \norm{\Pro_{H_{1}}x-\Pro_{H_{2}}x } $ if and only if $\R_{W_{1} }x\in W_{2}$.

	\cref{item:lem:u1u2LD}: The proof is similar to that  of \cref{item:lem:u1u2LD:u12neq0:innp>0:a} and is omitted.

	\cref{item:lem:u1u2LD:u12neq0:innp<0:W12}:  Because $x \in W_{1} \cap W^{c}_{2}$, invoking \cref{fact:FactWFactH} and \cref{item:lem:u1u2LD:<0},
	we know that $\Pro_{W_{2}}x=\Pro_{H_{2}}x \in H_{2}  \subseteq W_{1}$. Hence, $\Pro_{W_{2}}x =\Pro_{H_{2}}x  \in W_{1} \cap H_{2} \subseteq W_{1} \cap W_{2}$.
	Apply \cref{fact:AsubseteqB:Projection} with $A=W_{1} \cap H_{2}$ and $B=H_{2}$ to obtain that $ \Pro_{H_{2}}x = \Pro_{W_{1} \cap H_{2}  }x $. Moreover, employ \cref{fact:AsubseteqB:Projection} with $A=W_{1} \cap W_{2}$ and $B=W_{2}$ to entail that $\Pro_{W_{2}}x = \Pro_{W_{1} \cap W_{2}  } x$.  So, 	\cref{item:lem:u1u2LD:u12neq0:innp<0:W12} holds.

	\cref{item:lem:u1u2LD:u12neq0:innp<0:d}: Inasmuch as  \cref{item:lem:u1u2LD:u12neq0:innpneq0}, $\innp{u_{1},u_{2}} <0 $ implies that $u_{2}=- \frac{\norm{u_{2}}}{\norm{u_{1}}}u_{1}$.   Then via  \cref{fact:Projec:Hyperplane} and  \cref{eq:WH},  
	\begin{subequations}
		\begin{align} 
		&\norm{x -\Pro_{H_{1}}x} = \Norm{ x -\Big( x + \frac{\eta_{1} - \innp{x,u_{1}}}{\norm{u_{1}}^{2} }  u_{1} \Big) } =\Big| \frac{\eta_{1} - \innp{x,u_{1}}}{\norm{u_{1}} }  \Big|; \label{eq:lemma:u1u2Ld:Less0:xinW2:xPh1Larger:xPH1x} \tag{5a}\\
		&x \in W_{2} \Leftrightarrow \innp{x, u_{2}} \leq \eta_{2} \Leftrightarrow \Innp{x, - \frac{\norm{u_{2}}u_{1}}{\norm{u_{1}}}}  \leq \eta_{2} \Leftrightarrow \Innp{x, - \frac{u_{1}}{\norm{u_{1}}}} \leq \frac{\eta_{2}}{\norm{u_{2}}}.\label{eq:lemma:u1u2Ld:Less0:xinW2:xPh1Larger:xinW2} \tag{5b}
		\end{align}
	\end{subequations}

	\cref{lemma:u1u2Ld:Less0:xinW2:xPh1Larger:xnotinW1}: Assume to the contrary that  $x \in W_{1}$.  Then $\innp{x, u_{1}} \leq \eta_{1}$. 
	Hence, $	\norm{x -\Pro_{H_{1}}x}   \stackrel{\cref{eq:lemma:u1u2Ld:Less0:xinW2:xPh1Larger:xPH1x}}{=} \Big| \frac{\eta_{1} - \innp{x,u_{1}}}{\norm{u_{1}}}  \Big|  
	= \frac{\eta_{1} - \innp{x,u_{1}}}{\norm{u_{1}} }  
	\stackrel{\cref{eq:lemma:u1u2Ld:Less0:xinW2:xPh1Larger:xinW2}}{\leq} \frac{\eta_{1}}{\norm{u_{1}}}  + \frac{\eta_{2}}{\norm{u_{2}}} \stackrel{\text{\cref{item:lem:u1u2LD}}}{=} \norm{\Pro_{H_{1}}x - \Pro_{H_{2}}x}$,
	which   contradicts  the assumption, $\norm{x - \Pro_{H_{1}}x} > \norm{\Pro_{H_{1}}x-\Pro_{H_{2}}x }$.  
	
	\cref{lemma:u1u2Ld:Less0:xinW2:xPh1Larger:RW1notinW2}: Due to \cref{lemma:u1u2Ld:Less0:xinW2:xPh1Larger:xnotinW1} and \cref{eq:WH},  $\innp{x,u_{1}} > \eta_{1}$. So,   \cref{eq:lemma:u1u2Ld:Less0:xinW2:xPh1Larger:xPH1x} and  \cref{item:lem:u1u2LD} lead to
		\begin{align} \label{lemma:u1u2Ld:Less0:xinW2:d:>} \tag{6}
		\norm{x - \Pro_{H_{1}}x} > \norm{\Pro_{H_{1}}x-\Pro_{H_{2}}x }  \Leftrightarrow \frac{ \innp{x,u_{1}} -\eta_{1} }{\norm{u_{1}} } > \frac{\eta_{1}}{\norm{u_{1}}} +\frac{\eta_{2}}{\norm{u_{2}}} 
	  \Leftrightarrow -\frac{\norm{u_{2}}}{\norm{u_{1}}}  (2\eta_{1} - \innp{x,u_{1}} )> \eta_{2}. 
		\end{align}
	
	On the other hand, adopting $u_{2}=- \tfrac{\norm{u_{2}}}{\norm{u_{1}}}u_{1}$ and $\R_{W_{1}}x = x+2\frac{\eta_{1} - \innp{x,u_{1}}}{\norm{u_{1}}^{2} } u_{1}$, we observe that $	\innp{\R_{W_{1}}x ,u_{2}} 
	= \Innp{\R_{W_{1}}x, - \frac{\norm{u_{2}}u_{1}}{\norm{u_{1}}} } 
	= - \frac{\norm{u_{2}}}{\norm{u_{1}}}   \Innp{ x+2\frac{\eta_{1} - \innp{x,u_{1}}}{\norm{u_{1}}^{2} } u_{1}, u_{1}  }  
	= - \frac{\norm{u_{2}}}{\norm{u_{1}}}   (2 \eta_{1} - \innp{x,u_{1}}  ) 
	\stackrel{\cref{lemma:u1u2Ld:Less0:xinW2:d:>}}{>} \eta_{2}$,
	which implies that $\R_{W_{1}}x  \notin W_{2} $.

	\cref{lemma:u1u2Ld:Less0:xinW2:xPh1Larger:NormLarger}:  
Notice that
	\begin{align*}
	\norm{\R_{W_{2}}\R_{W_{1}}x - \R_{W_{1}}x } &=\norm{\R_{H_{2}}\R_{H_{1}}x - \R_{H_{1}}x } \quad (\text{by \cref{lemma:u1u2Ld:Less0:xinW2:xPh1Larger:xnotinW1}, \cref{lemma:u1u2Ld:Less0:xinW2:xPh1Larger:RW1notinW2}, and \cref{fact:FactWFactH}}) \\
	&= 2 \norm{ 2\Pro_{H_{2}} \Pro_{H_{1}}x -\Pro_{H_{2}} x - 2\Pro_{H_{1}}x+x}\\
	&= 2\left| \frac{\eta_{2} -\innp{x,u_{2}}}{\norm{u_{2}}}  +2 \frac{\eta_{1} -\innp{x,u_{1}}}{\norm{u_{1}}}  \right|(\text{\cref{fact:Projec:Hyperplane}, \cite[Lemma~2.11(i)]{Oy2020ProjectionHH}, and $u_{2}=- \tfrac{\norm{u_{2}}}{\norm{u_{1}}}u_{1}$}) \\
	&=  2\left|  \frac{\eta_{2}}{\norm{u_{2}}} + 2\frac{\eta_{1}}{\norm{u_{1}}}- \Innp{x, \frac{u_{1}}{\norm{u_{1}}} } \right| \quad (\text{by $u_{2}=- \tfrac{\norm{u_{2}}}{\norm{u_{1}}}u_{1}$}) \\
	&= 2 \left| \frac{\eta_{2}}{\norm{u_{2}}} + \frac{\eta_{1}}{\norm{u_{1}}}+ \frac{\eta_{1} -\innp{x,u_{1}}}{\norm{u_{1}}}  \right| \\
	&= 2\frac{\innp{x,u_{1}} - \eta_{1} }{\norm{u_{1}}} -2\Big( \frac{\eta_{1}}{\norm{u_{1}}} +\frac{\eta_{2}}{\norm{u_{2}}}  \Big) \quad (\text{by \cref{lemma:u1u2Ld:Less0:xinW2:d:>}})\\
	&=2\norm{x - \Pro_{H_{1}}x} - 2\norm{ \Pro_{H_{1}}x- \Pro_{H_{2}}x} \quad  (\text{by \cref{lemma:u1u2Ld:Less0:xinW2:xPh1Larger:xnotinW1}, \cref{eq:lemma:u1u2Ld:Less0:xinW2:xPh1Larger:xPH1x} and \cref{item:lem:u1u2LD}})\\
	& \leq 2\norm{x - \Pro_{H_{1}}x} 
	= \norm{ x -\R_{W_{1}}x }.
	\end{align*}
	Therefore, \cref{lemma:u1u2Ld:Less0:xinW2:xPh1Larger:NormLarger} holds.

	\cref{item:lem:u1u2LD:LD}:    Let $x \in \mathbb{N}$ and let $(\forall i \in \{1,2\})$ $y_{i} \in H_{i}$.    Bearing  \cref{fac:SetChangeProje} in mind, we have that
	\begin{subequations}
		\begin{align}
		& \R_{H_{i}}x - x   =2(\Pro_{y_{i} +\pa H_{i}} (x)-x)  =-2 \Pro_{(\pa H_{i})^{\perp} }(x-y_{i} ) \in (\pa H_{i} )^{\perp}=(\ker u_{i})^{\perp} =\spn \{u_{i}\}; \label{eq:item:lem:u1u2LD:LD:R} \tag{7a}\\
		& \R_{H_{2}}\R_{H_{1}}x - \R_{H_{1}}x  = 2(\Pro_{y_{2} +\pa H_{2}} (\R_{H_{1}}x)-\R_{H_{1}}x) =-2 \Pro_{(\pa H_{2})^{\perp} }(\R_{H_{1}}x-y_{2}) \in \spn \{ u_{2}\}.\label{eq:item:lem:u1u2LD:LD:RR}\tag{7b}
		\end{align}	
	\end{subequations}
	Recall that $u_{1}$ and $u_{2}$ are linear dependent. Hence, $x, \R_{H_{1}}x, \R_{H_{2}}x$ are affinely  dependent, and so do $x, \R_{H_{1}}x, \R_{H_{2}}\R_{H_{1}}x$. 
	
	In addition,    if $x \in W_{1} \cup W_{2}$, then $\R_{W_{1}}x =x$ or $\R_{W_{2}}x =x$, which clearly forces that  $x, \R_{W_{1}}x, \R_{W_{2}}x$ are affinely dependent. If $x \notin W_{1} \cup W_{2}$, then, by \cref{fact:FactWFactH}, $(\forall i \in \{1,2\})$ $ \R_{W_{i}}x =\R_{H_{i}}x$. Hence, by \cref{eq:item:lem:u1u2LD:LD:R},  $x, \R_{W_{1}}x, \R_{W_{2}}x$ are affinely  dependent.
	
	Similarly, in view of \cref{fact:FactWFactH} and \cref{eq:item:lem:u1u2LD:LD:RR},   $x, \R_{W_{1}}x, \R_{W_{2}}\R_{W_{1}}x$ are affinely dependent. 
\end{proof}

\subsection*{Proof of \cref{lem:innpu1u2LID}}

\begin{proof}
	\cref{lem:innpu1u2LID:notinW1}: Because  $x \notin W_{1}$,  by \cref{fact:FactWFactH}, we derive that  $\R_{W_{1}}x= 2\Pro_{W_{1}}x-x=  x + 2\frac{\eta_{1} - \innp{x, u_{1}}}{\norm{u_{1}}^{2}} u_{1} $. Then
	\begin{align*}
	\R_{W_{1}} x \in W_{2} 
\Leftrightarrow \Innp{ x + 2\frac{\eta_{1} - \innp{x, u_{1}}}{\norm{u_{1}}^{2}} u_{1} , u_{2}} \leq \eta_{2} 
\Leftrightarrow (\innp{x,u_{2}} -\eta_{2})\norm{u_{1}}^{2} -2( \innp{x,u_{1}}-\eta_{1} )\innp{u_{1},u_{2}} \leq 0.
	\end{align*}

	\cref{lem:innpu1u2leq0:xW1CW2}: 
As a consequence of  \cref{eq:WH} and $x \in W_{2}$,  $\innp{ x,u_{2} } -\eta_{2} \leq 0$. Moreover, 
	using  $\R_{W_{1}}x \in W_{2}$  and	\cref{eq:WH}, we see that   $	\innp{\R_{W_{1}}x,u_{2} } -\eta_{2} \leq 0   \Leftrightarrow 
	\innp{2 \Pro_{W_{1}}x -x,u_{2} } -\eta_{2} \leq 0 
	\Rightarrow 	\innp{ \Pro_{W_{1}}x ,u_{2} } -\eta_{2} \leq \tfrac{1}{2}( \innp{ x,u_{2} } -\eta_{2}) \leq 0$,
	which implies that $\Pro_{W_{1}}x \in W_{2} $. Clearly, $\Pro_{W_{1}}x  \in W_{1}$.  Hence,  $\Pro_{W_{1}}x  \in W_{1} \cap W_{2}$.

	\cref{lem:innpu1u2geq0}:
	Because $x \in W_{1} \cap W^{c}_{2}$, by \cref{eq:WH}, we know that $	\innp{x,u_{1}} \leq \eta_{1}$ and   $\innp{x,u_{2}} > \eta_{2}$. Combine these inequalities with
	$\innp{u_{1},u_{2}} \geq  0$ to see that 
		\begin{align}\label{eq:lem:innpu1u2geq0:2:inW2} \tag{8}
		 \innp{x, u_{1}} -\eta_{1} + \frac{\eta_{2} - \innp{x, u_{2}}}{\norm{u_{2}}^{2}} \innp{u_{1}, u_{2}} \leq 0 \text{ and }
		 \innp{x, u_{1}} -\eta_{1} + 2\frac{\eta_{2} - \innp{x, u_{2}}}{\norm{u_{2}}^{2}} \innp{u_{1}, u_{2}} \leq 0. 
		\end{align}
	According to \cref{fact:FactWFactH}, $\innp{\Pro_{W_{2}}x, u_{1}} -\eta_{1} 
	= \Innp{ x + \frac{\eta_{2} - \innp{x, u_{2}}}{\norm{u_{2}}^{2}} u_{2}, u_{1}} -\eta_{1} =
	\innp{x, u_{1}} -\eta_{1} + \frac{\eta_{2} - \innp{x, u_{2}}}{\norm{u_{2}}^{2}} \innp{u_{1}, u_{2}}
	\stackrel{\cref{eq:lem:innpu1u2geq0:2:inW2}}{\leq} 0$ and $\innp{\R_{W_{2}}x, u_{1}} -\eta_{1}  =  \Innp{x +2 \frac{\eta_{2} - \innp{x, u_{2}}}{\norm{u_{2}}^{2}} u_{2} , u_{1}} -\eta_{1}  
	=\innp{x, u_{1}} -\eta_{1} + 2\frac{\eta_{2} - \innp{x, u_{2}}}{\norm{u_{2}}^{2}} \innp{u_{1}, u_{2}} 
	\stackrel{\cref{eq:lem:innpu1u2geq0:2:inW2}}{\leq} 0$,
	which imply that $\Pro_{W_{2}}x  =\Pro_{H_{2}}x \in W_{1}$ and    $\R_{W_{2}}x   \in W_{1}$, respectively. In addition, because $x \notin W_{2}$, by  \cref{lem:WH:mathcalH:RWinW},  $\R_{W_{2}}x   \in \inte W_{2}$.

	\cref{lem:innpu1u2leq0:xW1CW2C}: Note that, by \cref{eq:WH},  $x \in W^{c}_{1} \cap W^{c}_{2}$ means that $\innp{ x, u_{1}} -\eta_{1} >0 $ and $\innp{ x, u_{2}} -\eta_{2} >0$. Combine these two inequalities with  $\innp{u_{1}, u_{2} } \leq 0$ to see that
	\begin{align} \label{eq:lem:innpu1u2leq0:xW1CW2C:>0} \tag{9}
	\innp{ x, u_{2}} -\eta_{2} +  2\frac{\eta_{1} - \innp{x, u_{1}}}{\norm{u_{1}}^{2}} \innp{u_{1}, u_{2}} >0.
	\end{align}
	Because $x \notin W_{1}$,  by \cref{fact:FactWFactH},  $\R_{W_{1}}x= 2\Pro_{H_{1}}x-x=  x + 2\frac{\eta_{1} - \innp{x, u_{1}}}{\norm{u_{1}}^{2}} u_{1} $. Now
	\begin{align*}
	\innp{\R_{W_{1}}x, u_{2}} -\eta_{2}  =  \Innp{x + 2\frac{\eta_{1} - \innp{x, u_{1}}}{\norm{u_{1}}^{2}} u_{1} , u_{2}} -\eta_{2}  
	= \innp{ x, u_{2}} -\eta_{2} +  2\frac{\eta_{1} - \innp{x, u_{1}}}{\norm{u_{1}}^{2}} \innp{u_{1}, u_{2}} 
	\stackrel{\cref{eq:lem:innpu1u2leq0:xW1CW2C:>0}}{>}0,
	\end{align*}
	which entails that $ \R_{W_{1}}x \notin W_{2}$.
	
	\cref{lemma:PHiPHji}: On account of the assumption $ \Pro_{H_{1}}x \notin W_{2}$ and  \cref{fact:Projec:Hyperplane}, 	
	\begin{align} \label{eq:lemma:PHiPHji}  \tag{10}
	\innp{ \Pro_{H_{1}}x , u_{2}} >\eta_{2} \Leftrightarrow (\eta_{2} -\innp{x,u_{2}}) \norm{u_{1}}^{2}  -(\eta_{1} -\innp{x,u_{1}}) \innp{u_{1},u_{2}} <0.
	\end{align}
	According to the formula of $ \Pro_{H_{2}}\Pro_{H_{1}}x$ in \cite[Lemma~2.11(i)]{Oy2020ProjectionHH},
	\begin{align*}
	& \innp{ \Pro_{H_{2}}\Pro_{H_{1}}x, u_{1} } -\eta_{1} \\
	=& \innp{x,u_{1}} -\eta_{1} + \frac{\eta_{1} -\innp{x,u_{1}}}{\norm{u_{1}}^{2}} \innp{u_{1}, u_{1}} +	\frac{1}{ \norm{u_{1}} \norm{u_{2}} } \left( (\eta_{2} -\innp{x,u_{2}}) \norm{u_{1}}^{2}  -(\eta_{1} -\innp{x,u_{1}}) \innp{u_{1},u_{2}}\right) \innp{u_{1},u_{2}} \\
	=& 	\frac{1}{ \norm{u_{1}} \norm{u_{2}} } \left( (\eta_{2} -\innp{x,u_{2}}) \norm{u_{1}}^{2}  -(\eta_{1} -\innp{x,u_{1}}) \innp{u_{1},u_{2}}\right) \innp{u_{1},u_{2}} \\
	<& 0, \quad (\text{by \cref{eq:lemma:PHiPHji}  and } \innp{u_{1},u_{2}}>0 )
	\end{align*}
	which, by \cref{eq:WH} and \cref{fact:FactWFactH}, guarantees that $\Pro_{W_{2}}\Pro_{H_{1}}x=\Pro_{H_{2}}\Pro_{H_{1}}x \in W_{1}$. Clearly, $\Pro_{H_{2}}\Pro_{H_{1}}x \in H_{2} \subseteq W_{2}$.	 
	Hence, $\Pro_{H_{2}}\Pro_{H_{1}}x \in W_{1} \cap W_{2}$. 
	
	\cref{lemma:PH1H2notinW2}: Note that $x \notin W_{1}$ and $\innp{u_{1},u_{2}} <0$ ensure $\frac{\eta_{1}-\innp{x,u_{1}}}{\norm{u_{1}}^{2}} \innp{u_{1},u_{2}} >0$.	
	On the other hand, using \cref{eq:WH},  \cref{fact:Projec:Hyperplane}, and $x \in H_{2}$, we obtain that
	\begin{align*}
	\R_{H_{1}}x\notin W_{2}   \Leftrightarrow \innp{	\R_{H_{1}}x, u_{2} } >\eta_{2} 
\Leftrightarrow \Innp{x+2\frac{\eta_{1}-\innp{x,u_{1}}}{\norm{u_{1}}^{2}} u_{1} , u_{2} } >\eta_{2} 
 \Leftrightarrow 2\frac{\eta_{1}-\innp{x,u_{1}}}{\norm{u_{1}}^{2}} \innp{u_{1},u_{2}} >0. 
	\end{align*}
	Hence, 	 \cref{lemma:PH1H2notinW2} is true. 
\end{proof}

\end{document}